\newcommand{\C}{\mathbb{C}}
\newcommand{\efe}{\mathbb{F}}
\newcommand{\FF}{\mathbb{F}}
\newcommand{\F}{\mathbb{F}}
\newcommand{\la}{\lambda}
\def\rank{\mathop{\rm rank}\nolimits}
\newcommand{\wh}{\widehat}
\newtheorem{theo}{Theorem}[section]
\newtheorem{deff}[theo]{Definition}
\newtheorem{prop}[theo]{Proposition}
\newtheorem{lem}[theo]{Lemma}
\newtheorem{cor}[theo]{Corollary}
\newtheorem{rem}[theo]{Remark}
\newtheorem{example}[theo]{Example}
\DeclareMathOperator{\diag}{diag}
\DeclareMathOperator{\rev}{rev}
\begin{document}

	\title{Local Linearizations of Rational Matrices with Application to Rational Approximations of Nonlinear Eigenvalue Problems}
	\author[uc3m]{Froil\'{a}n M. Dopico\fnref{fn1}}
	\ead{dopico@math.uc3m.es}
	
	\author[upv]{Silvia Marcaida\fnref{fn2}}
	\ead{silvia.marcaida@ehu.eus}
	
	\author[uc3m]{Mar\'{i}a C. Quintana\fnref{fn1}}
	\ead{maquinta@math.uc3m.es}
	
	\author[ucl]{Paul~Van~Dooren\fnref{fn3}}
	\ead{paul.vandooren@uclouvain.be}

	\address[uc3m]{Departamento de Matem\'aticas,
		Universidad Carlos III de Madrid, Avda. Universidad 30, 28911 Legan\'es, Spain.}
	\address[upv]{Departamento de Matem\'{a}tica Aplicada y Estadística e Investigación Operativa,
		Universidad del Pa\'{\i}s Vasco UPV/EHU, Apdo. Correos 644, Bilbao 48080, Spain.}
	\address[ucl]{Department of Mathematical Engineering, Universit\'{e} catholique de Louvain, Avenue Georges Lema\^itre 4, B-1348 Louvain-la-Neuve, Belgium.}
	
	\fntext[fn1]{Supported by ``Ministerio de Econom\'ia, Industria y Competitividad (MINECO)'' of Spain and ``Fondo Europeo de Desarrollo Regional (FEDER)'' of EU through grants MTM2015-65798-P and MTM2017-90682-REDT. The research of M. C. Quintana is funded by the “contrato predoctoral” BES-2016-076744 of MINECO.}
	\fntext[fn2]{Supported by ``Ministerio de Econom\'ia, Industria y Competitividad (MINECO)'' of Spain and ``Fondo Europeo de Desarrollo Regional (FEDER)'' of EU through grants MTM2017-83624-P and MTM2017-90682-REDT, and by UPV/EHU through grant GIU16/42.}
	\fntext[fn3]{This work was partially developed while Paul Van Dooren held a ``Chair of Excellence UC3M - Banco de Santander'' at Universidad Carlos III de Madrid in the academic year 2017-2018.}
	
		\begin{abstract}
		This paper presents a definition for local linearizations of rational matrices and studies their properties. This definition allows us to introduce matrix pencils associated to a rational matrix that preserve its structure of zeros and poles in subsets of any algebraically closed field and also at infinity. Moreover, such definition includes, as particular cases, other definitions that have been used previously in the literature. In this way, this new theory of local linearizations captures and explains rigorously the properties of all the different pencils that have been used from the 1970's until 2019 for computing zeros, poles and eigenvalues of rational matrices. Particular attention is paid to those pencils that have appeared recently in the numerical solution of nonlinear eigenvalue problems through rational approximation.
	\end{abstract}

	\begin{keyword}
		rational matrix \sep rational eigenvalue problem \sep nonlinear eigenvalue problem \sep linearization \sep polynomial system matrix \sep rational approximation \sep block full rank pencils
		
		\medskip\textit{AMS subject classifications}: 65F15, 15A18, 15A22, 15A54, 93B18, 93B20, 93B60
	\end{keyword}

	\maketitle
	
	\section{Introduction}
	Rational matrices, i.e., matrices whose entries are rational functions of a scalar variable, are a classical topic inside matrix theory that has received a lot of attention since the 1950s, as a consequence of their fundamental role in linear systems and control theory \cite{McMi1,McMi2}. Classical references on rational matrices and their applications to these areas are, for instance, the pioneering monographs \cite{Kailath,Rosen}. The most relevant structural data of a rational matrix are its zeros and poles, together with their partial multiplicities or structural indices, and its minimal indices, which exist only when the matrix is singular, i.e., rectangular or square with identically zero determinant. These structural data are very important in the applications mentioned above, which motivated in the 1970s a considerable research activity on the development of numerical algorithms for computing them, see \cite{vandooren1981} and the references therein. Among the different algorithms developed for this purpose in the 1970-80s, the most reliable ones were based on constructing a matrix pencil, i.e., a matrix polynomial of degree $1$, containing exactly {\em all} the information about the structural data of the considered rational matrix \cite{vandooren1981,VVK79}, and then applying to this matrix pencil backward stable algorithms, developed also in the 1970s, for computing the eigenvalues and/or other structural data of general pencils \cite{molerstewart1973,vandooren1979}.
	
	The pencils mentioned in the previous paragraph are among the first examples of linearizations of rational matrices. Such pencils are, in fact, particular instances of minimal polynomial system matrices of the considered rational matrix, a key concept introduced by Rosenbrock \cite{Rosen} that allows us, among other things, to include simultaneously all the information about the zeros and the poles of a rational matrix into a polynomial matrix.
	
	Recently, rational matrices have received considerable attention from the  different perspective of what are called rational eigenvalue problems (REPs).
	Such REPs may arise directly from applications \cite{mehrmanvoss2004}, as approximations of other nonlinear eigenvalue problems (NLEPs) (see, for instance, \cite{nlep,automatic,Saad,van-beeumen-et-al-2018}), and, even more, REPs have also been used to approximate polynomial eigenvalue problems (PEPs) in order to take advantage of certain low rank structures \cite{lu-huang-bai-su-2015}. Since NLEPs are nowadays a very active area of research (see the recent survey \cite{guttel-tisseur-2017} and the references therein), REPs and rational matrices are currently a hot topic inside applied and numerical linear algebra. In this scenario, it is of interest to establish in the next paragraphs connections and differences between how rational matrices are viewed in the classic areas of linear systems and control theory and in the modern one of NLEPs, since, unfortunately, some modern and pioneering references on NLEPs seem to ignore classic results on rational matrices.
	
	First, let us review the definition of REPs. Given a regular rational matrix $G(\la)$, the corresponding REP is defined as computing numbers $\la_0$ and nonzero vectors $x$ such that $G(\la_0) x = 0$. These $\la_0$ and $x$ are called eigenvalues and (right) eigenvectors of $G(\la)$, respectively, a terminology inherited from other matrix eigenvalue problems but that has never been used in standard references on rational matrices \cite{Kailath,Rosen}. Observe that the definition of REP assumes implicitly that $G(\la_0)$ is defined at $\la_0$, i.e., none of their entries become infinite. Thus, using the classic definitions for the structural data of rational matrices, we can say that $\la_0$ is a zero of $G(\la)$ but not a pole, and we can see REPs as particular cases of the computational problems on rational matrices investigated in the 1970-80s.
	
Second, we emphasize that rational approximations of NLEPs are only reliable in a certain target set. Moreover, in many works \cite{nlep,automatic,Saad,van-beeumen-et-al-2018}, the matrix defining the NLEP is assumed to be analytic in the target region, and such region does not contain the poles of the rational matrix defining the approximating REP. In particular, the poles are already known from the approximation process. This means that for those rational matrices coming from approximating these NLEPs, the poles are of no interest (since they are known), and only those zeros (eigenvalues) in the target set have to be computed. In addition, the structure at infinity (see \cite{Kailath} for a definition) is also of no interest. This is in stark contrast with the situation for rational matrices arising in linear systems and control theory, which, usually, are transfer functions of time invariant linear systems and, therefore, all the finite and infinite structure of zeros and poles related to the transfer function is of interest and has to be computed \cite{vandooren1981}.
	
	As said before, some influential modern references on solving numerically NLEPs via rational approximations ignore classic results on rational matrices. Probably, this is a consequence of the differences mentioned in the previous paragraph and, also, of the fact that rational matrices coming from approximating NLEPs may appear represented in forms different from the most standard ones in linear system and control theory. This lack of connections with classical results is unfortunate, but has had also the positive effect of producing new results on and approaches to rational matrices. For instance, on the unfortunate side, it is surprising that the idea of solving REPs via linearizations was not used in modern references until the key paper \cite{su-bai-2011} was published, despite the fact it had been intensively used much earlier (see \cite{vandooren1981} and the references therein), and it is one of the most reliable methods for solving REPs. On the positive side, \cite{su-bai-2011} introduced a new companion-like linearization of any rational matrix that is very useful in computations. For this purpose, \cite{su-bai-2011} expressed the rational matrix as the sum of a polynomial matrix and a state-space realization and approached the problem with the spirit of linearizations of polynomial matrices \cite{GoLaRo82}, instead of using the classical point of view of polynomial system matrices. (However, it is worth highlighting that, in Example \ref{ex_subai}, we will see that the linearization in \cite{su-bai-2011} is nothing else than a polynomial system matrix of the considered rational matrix. We will see in Section \ref{guttel} that the same happens for the linearizations in \cite{nlep}.)
	
	Another point to be remarked is that reference \cite{su-bai-2011} started a confusing practice, common to several references dealing with linearizations of rational matrices that approximate NLEPs. Namely, to term as ``linearizations'' pencils which are proved to contain only partial information about the corresponding rational matrix. For example, the papers \cite{nlep,automatic,Saad,su-bai-2011}, which are excellent from the numerical point of view, only prove (at most) that the algebraic and geometric multiplicities of the eigenvalues are preserved in the ``linearization'', but nothing is proved about the partial multiplicities. This is in contrast with the standard definition of (strong) linearization of polynomial matrices \cite{GoLaRo82,spectral}, which guarantees that linearizations contain {\em all} the information about the eigenvalues of polynomial matrices (including at infinity in the strong case), as well as with the linear minimal polynomial system matrices used as linearizations of rational matrices in \cite{vandooren1981,VVK79}, which contain {\em all} the information about poles and zeros of the rational matrices.
	
	The partial results proved in \cite{su-bai-2011} were among the motivations of the development of a rigorous definition and theory of strong linearizations of arbitrary (regular or singular, square or rectangular) rational matrices in \cite{strong}. Moreover, infinitely many examples of such strong linearizations have been constructed in \cite[Section 5.2]{strong} through the family of so-called strong block minimal bases linearizations of rational matrices. In simple words, the main idea of the theory in \cite{strong} is to combine minimal polynomial system matrices of rational matrices with the theory of linearizations of polynomial matrices \cite{spectral,BKL,GoLaRo82} in the following sense: strong linearizations of a rational matrix $G(\la)$ are linear minimal polynomial system matrices of rational matrices $\widehat{G} (\la)$ that may be different from $G(\la)$, but that are related to it via unimodular polynomial matrices, biproper rational matrices, and direct sums with identities. In this way such strong linearizations contain {\em all} the information about poles and zeros of the considered rational matrices and extend the ``linearizations'' used in \cite{vandooren1981,VVK79}, which correspond to the particular case when  $\widehat{G} (\la) = G (\la)$. Related works about linearizations containing {\em all} the pole-zero information of a rational matrix (in some cases not at infinity) are \cite{AlBe16,dasalam2019-1,dasalam2019,dopmarquin2019}.
	
	However, the definitions of linearization and strong linearization in \cite{strong} do not capture always the pencils defined in \cite{nlep,automatic,Saad,su-bai-2011} for two reasons. First, the pencils in \cite{nlep,automatic,Saad,su-bai-2011} do not always satisfy the minimality requirements of the definitions in \cite{strong}. Second, and related to the first fact, some of these pencils may not content all the information about the poles of the rational matrix (neither the information of those zeros that are also poles), and a zero of the linearization could be a pole of the rational matrix but not a zero. But, we stress that this is not a drawback in the setting of \cite{nlep,automatic,Saad,su-bai-2011} because, as explained before, in these cases the poles are of no interest, and only the eigenvalues in a certain target set have to be computed. This motivates us to develop in this paper a theory of what we call local linearizations of rational matrices, where the word local means that the linearization is only guaranteed to contain all the information about those zeros and poles of the rational matrix which are located in a certain set.
	
	The theory of local linearizations of rational matrices captures all the pencils that have been used (as far as we know) in the literature for solving REPs arising from approximating NLEPs. As illustration, we will apply in this paper this theory to the pencils in \cite{nlep,Saad,su-bai-2011} in several different ways. The application to the pencils in \cite{automatic} is postponed to \cite{local2} with the goal of limiting the length of this paper. In addition, we will see that the definition of local linearizations include the definitions of linearizations and strong linearizations of arbitrary rational matrices presented in \cite{strong}, just by considering as set the whole underlying field and including infinity in the strong case. As a consequence, local linearizations also include the pencils originally used in \cite{vandooren1981,VVK79}. Thus, this new local theory is a flexible tool that generalizes and includes most of the previous results available in the literature in this area. This is in part possible due to a new and more flexible treatment of polynomial system matrices at infinity.
	
 The theory of local linearizations of rational matrices is based on the extension of Rosenbrock's fundamental concept of minimal polynomial system matrix to a local perspective. Such extension is performed in a very simple and applicable manner that avoids as much as possible the use of abstract algebraic concepts. This is in contrast with related local approaches as the one in \cite{cullen1986} and the references therein, which, in addition, are focused on the underlying local equivalence relationships rather than on the properties of polynomial system matrices.
The local linearization approach connects the concept of linearization with classical results as the local Smith form of polynomial matrices (see, for instance, \cite[Section S1.5]{GoLaRo82}) and the local Smith--McMillan form of rational matrices (see \cite[Theorem II.9]{Integral} and \cite{vandooren-laurent-1979}).
	
	The paper is organized as follows. Section \ref{sec.preliminaries} summarizes some basic results that will be used in the rest of the paper. Locally minimal polynomial system matrices are defined and studied in Section \ref{sec.polysysmat}. Section \ref{sec_local} presents the main definitions and properties of local linearizations of rational matrices. Section \ref{sec-full-rank-pencils} introduces the so-called block full rank pencils, which are linearizations of rational matrices that do not contain any information about the poles, and are closely related to the block minimal bases linearizations of polynomial matrices recently presented in \cite{BKL}. The application of the local theory to the pencils in \cite{nlep} is analyzed in depth and from two perspectives in Section \ref{guttel}. Finally, Section \ref{sect:con} discusses the conclusions and some lines of future research. Several examples that illustrate the theoretical results are scattered throughout the paper. They are often based on the pencils introduced in \cite{Saad,su-bai-2011}.

\section{Preliminaries} \label{sec.preliminaries}

We assume throughout this paper that $\F$ is an algebraically closed field that does not include infinity. As usual, $\efe[\la]$ denotes the ring of polynomials with coefficients in $\efe$ and $\efe(\la)$ the field of rational functions or, equivalently, the field of fractions of $\efe[\la]$. A rational function $r(\la)=\frac{n(\la)}{d(\la)}$ is said to be 
proper if $\deg(n(\la))\leq\deg(d(\la)),$
strictly proper if $\deg(n(\la))<\deg(d(\la)),$ and
biproper if $\deg(n(\la))=\deg(d(\la))$, where $\deg(\cdot)$ stands for ``degree of''.

$\efe^{p\times m}$, $\efe[\la]^{p\times m}$ and $\efe(\la)^{p\times m}$ denote the sets of $p\times m$ matrices with elements in $\efe,$ $\efe[\la]$ and $\efe(\la),$  respectively. The elements of $\efe[\la]^{p\times m}$ are called polynomial matrices  or matrix polynomials. In the sequel we will use both terms. A unimodular matrix is a square polynomial matrix with polynomial inverse or, equivalently,  a square polynomial matrix with nonzero constant determinant. Moreover, the elements of $\efe(\la)^{p\times m}$ are called rational matrices. A (strictly) proper rational matrix is a rational matrix whose entries are (strictly) proper rational functions. A biproper matrix is a square proper matrix with proper inverse or, equivalently, a square proper matrix whose determinant is a biproper rational function. The normal rank of a polynomial or rational matrix $G(\la)$ is the size of its largest nonidentically zero minor and is denoted by $\rank G(\la)$. See \cite{Kailath} and \cite{Vard} for more information on these and other concepts related to polynomial and rational matrices.
	
As a first step to define local linearizations of rational matrices, we present local notions and results about rational matrices. We denote the point at infinity as $\infty.$

\begin{deff}
Let $R(\la)\in\F(\la)^{p\times m}$. Let $\la_{0}\in\efe$, and $\Sigma\subseteq\F$ be nonempty.	
\begin{itemize}
	\item [\rm(i)] $R(\la)$ is \textit{defined or bounded at $\la_{0}$} if  $R(\la_0)\in\F^{p\times m}.$
	\item [\rm(ii)] $R(\la)$ is \textit{defined or bounded at $\infty$} if  $R(1/\la)$ is defined at $0.$
	\item [\rm(iii)] $R(\la)$ is \textit{defined or bounded in $\Sigma$} if  $R(\la_0)\in\F^{p\times m}$ for all $\la_0\in\Sigma$.
\end{itemize}
\end{deff}

Notice that a rational matrix being defined at $\la_{0}\in\efe$ is equivalent to having a Taylor expansion around $\la_0.$ Moreover, a rational matrix is defined at infinity if and only if is proper.

\begin{deff}\label{def:regular}
Let $R(\la)\in\F(\la)^{m\times m}$.	Let $\la_{0}\in\efe$, and $\Sigma\subseteq\F$ be nonempty.
\begin{itemize}
	\item [\rm (i)] $R(\la)$ is \textit{regular or invertible at $\la_{0}$} if it is defined at $\la_0$ and $\det R(\la_0)\neq 0.$
	\item [ \rm(ii)] $R(\la)$ is \textit{regular or invertible at $\infty$} if $R(1/\la)$ is regular at $0.$
	\item [ \rm (iii)] $R(\la)$ is \textit{regular or invertible in $\Sigma$} if it is regular at each $\la_0\in\Sigma.$
\end{itemize}

\end{deff}
A rational matrix $R(\la)$ is said to be regular if it is regular for some $\la_0\in\F.$ That is, if $R(\la)$ is square and $\det R(\la)\not\equiv 0.$ Note that $R(\la)$ is regular at $\la_0\in\efe$ if and only if both $R(\la)$ and $R(\la)^{-1}$ have a Taylor expansion around $\la_0.$ Moreover, biproper matrices are those rational matrices that are regular at infinity, while unimodular matrices are those rational matrices that are regular in $\efe$.

In regard to the previous definitions, we introduce some equivalence relations defined in the set of rational matrices \cite{AmMaZa13, AmMaZa15}.

\begin{deff}\label{def:equivalent}
Let $G(\la), H(\la)\in\F(\lambda)^{p\times m}$. Let $\la_{0}\in\efe$, and $\Sigma\subseteq\F$ be nonempty.
\begin{itemize}
	\item [\rm(i)]
  $G(\la)$ and $H(\la)$ are equivalent at $\la_0$ if there exist rational matrices $R_1(\la)\in\F(\lambda)^{p\times p}$ and $R_2(\la)\in\F(\lambda)^{m\times m}$ both regular at $\la_0$ such that $R_1(\la)G(\la)R_2(\la)=H(\la).$
	\item [\rm(ii)] $G(\la)$ and $H(\la)$ are equivalent at $\infty$ if there exist rational matrices $R_1(\la)\in\F(\lambda)^{p\times p}$ and $R_2(\la)\in\F(\lambda)^{m\times m}$ both regular at $\infty$ such that $R_1(\la)G(\la)R_2(\la)=H(\la).$
	\item [\rm(iii)] $G(\la)$ and $H(\la)$ are equivalent in $\Sigma$ if there exist rational matrices $R_1(\la)\in\F(\lambda)^{p\times p}$ and $R_2(\la)\in\F(\lambda)^{m\times m}$ both regular in $\Sigma$ such that $R_1(\la)G(\la)R_2(\la)=H(\la).$
\end{itemize}
\end{deff}

Note that if $\Sigma=\F$ is considered in Definition \ref{def:equivalent}$\rm(iii)$, then $R_{1}(\lambda)$ and $R_{2}(\la)$ are both unimodular, and the standard definition of unimodular equivalence is recovered.

We now introduce the definition of the local Smith--McMillan form of a rational matrix at a point (finite and infinite). The  notion of the Smith--McMillan form of a rational matrix was first studied by McMillan in \cite{McMi1,McMi2} and, then, in other works as \cite{Kailath,Integral,Rosen,Vard, Factorization}.  The local Smith--McMillan form is a particular case of the very general (and abstract) result \cite[Theorem II.9]{Integral}. A description valid for rational matrices over the complex field can be found in \cite{vandooren-laurent-1979}, and a complete and rigorous modern treatment in \cite{AmMaZa15}.
Let $G(\lambda)\in\F(\lambda)^{p\times m}$ be any rational matrix of normal rank $r$. Let $\lambda_{0}\in\efe.$ Then $G(\la)$ is equivalent at $\la_0$ to a matrix of the form
\begin{equation}\label{localsm}
\left[\begin{array}{cc}
\diag\left((\la-\la_{0})^{\nu_{1}},\ldots, (\la-\la_{0})^{\nu_{r}}\right)&0 \\
0& 0_{(p-r)\times (m-r)}
\end{array}\right],
\end{equation}
where $\nu_{1}\leq\cdots\leq\nu_{r}$ are integers. The integers $\nu_{1},\ldots,\nu_{r}$ are uniquely determined by $G(\la)$ and $\la_{0}$, and are called the invariant orders at $\la_0$ of $G(\la)$. The matrix in \eqref{localsm} is called the local Smith--McMillan form of $G(\la)$ at $\la_{0}.$ Moreover, $G(\la)$ is equivalent at $\infty$ to a matrix of the form
\begin{equation}\label{localsm_inf}
	\left[\begin{array}{cc}
	\diag\left(\frac{1}{{\la}^{\mu_{1}}},\ldots,\frac{1}{{\la}^{\mu_{r}}}\right)&0 \\
	0& 0_{(p-r)\times (m-r)}
	\end{array}\right]
\end{equation}
where $\mu_{1}\leq\cdots\leq\mu_{r}$ are integers. These integers $\mu_{1},\ldots,\mu_{r}$ are uniquely determined by $G(\la)$, and are called the invariant orders at infinity of $G(\la)$. The matrix in \eqref{localsm_inf} is called the Smith--McMillan form of $G(\la)$ at $\infty.$




In order to define zeros and poles we need to distinguish between positive and negative invariant orders \cite{Kailath,Vard}. When we say that a rational matrix has $\nu_1\leq\cdots\leq \nu_k<0=\nu_{k+1}=\cdots=\nu_{u-1}<\nu_u\leq\cdots\leq \nu_r$ as invariant orders at $\la_0$ (infinity) we mean that $k$ may take values from 0 to $r$ and $u$ from $1$ to $r+1$.
For instance, if $k=0$ all the invariant orders are nonnegative; if, in addition, $u=1$ then they are all positive, but if $k=0$ and $u=r+1$ they are all 0.
 	
\begin{deff}
Let $G(\la)\in\F(\lambda)^{p\times m}$ and $\la_{0}\in\F$. Let
$\nu_1\leq\cdots\leq \nu_k<0=\nu_{k+1}=\cdots=\nu_{u-1}<\nu_u\leq\cdots\leq \nu_r$ be the invariant orders at $\la_{0}$ of $G(\la)$. Then $\la_{0}$ is said to be a pole of $G(\la)$ with \textit{partial multiplicities} $-\nu_k,\ldots, -\nu_1,$ and a zero of $G(\la)$ with \textit{partial multiplicities} $\nu_u,\ldots, \nu_r.$ In particular, the positive integers $-\nu_k,\ldots, -\nu_1$ and $\nu_u,\ldots, \nu_r$ are called the pole and zero partial multiplicities of $G(\la)$ at $\la_0,$ respectively. Moreover, $(\la-\la_0)^{-\nu_i}$ for $i=1,\ldots,k$ are called  the pole elementary divisors of $G(\la)$ at $\la_0$, while $(\la-\la_0)^{\nu_i}$ for $i=u,\ldots,r$ are called the zero elementary divisors of $G(\la)$ at $\la_0.$ Finally, the pole (zero) algebraic multiplicity of $\la_0$ is the sum of its pole (zero) partial multiplicities, and the pole (zero) geometric multiplicity of $\la_0$ is the number of its pole (zero) partial multiplicities.
\end{deff}	

If $G(\la)$ is a polynomial matrix then the polynomials $(\la-\la_0)^{\nu_i}$ with $\nu_i\neq 0$ are simply called elementary divisors of $G(\la)$ at $\la_0,$ and the nonzero integers $\nu_i\neq 0$ are all positive and are called partial multiplicities of $G(\la)$ at $\la_0.$

\begin{deff}
Let $G(\la)\in\F(\lambda)^{p\times m}$.	Let $\mu_1\leq\cdots\leq \mu_{\ell}<0=\mu_{{\ell}+1}=\cdots=\mu_{t-1}<\mu_t\leq\cdots\leq \mu_r$ be the invariant orders at $\infty$ of $G(\la)$. Then $\infty$ is said to be a pole of $G(\la)$ with \textit{partial multiplicities} $-\mu_{\ell},\ldots, -\mu_{1},$ and a zero of $G(\la)$ with \textit{partial multiplicities} $\mu_t,\ldots, \mu_r.$ In particular, the integers $-\mu_{\ell},\ldots, -\mu_1$ and $\mu_t,\ldots, \mu_r$ are called the pole and zero partial multiplicities of $G(\la)$ at $\infty,$ respectively.
\end{deff}

Some modern references, see for instance \cite{AlBe16,nlep,su-bai-2011}, also consider (finite) eigenvalues of rational matrices, a concept that is not mentioned at all in classical references of rational matrices. According to these modern references, we introduce the following definition.
\begin{deff} Let $G(\la)\in\F(\la)^{p\times m}$ be a rational matrix. A finite eigenvalue of $G(\la)$ is any $\la_0\in\F$ such that $\rank G(\la_0)< \rank G(\la),$\footnote{Note that here $\rank G(\la)$ denotes the normal rank of $G(\la),$ while $\rank G(\la_0)$ is the rank of the constant matrix $G(\la_0).$} with $G(\la_0)\in \F^{p\times m}.$ That is, $\la_0$ is a finite zero of $G(\la)$ but not a pole.
\end{deff}
 Observe that if $G(\la)\in\F(\la)^{p\times p}$ is regular, an eigenvalue of $G(\la)$ is any $\la_0\in\F$ such that there exists a nonzero vector $x\in\F^{p}$ satisfying $G(\la_0)x=0$ with $G(\la_0)\in \F^{p\times p},$ which is the standard definition of REP (Rational Eigenvalue Problem).

As a consequence of \cite[Theorem 2.3]{AmMaZa15} (see \cite[Section 2]{AmMaZa13} for more details) we can also present the Smith--McMillan form of a rational matrix in a nonempty subset of $\efe$, say $\Sigma$. Let $G(\la)\in\efe(\la)^{p\times m}$ with normal rank $r$. Then $G(\la)$ is equivalent in $\Sigma$ to a matrix of the form
\begin{equation}\label{eq:globSM}
\left[\begin{array}{cc}
\diag\left(\frac{\epsilon_1(\la)}{\psi_1(\la)},\ldots, \frac{\epsilon_r(\la)}{\psi_r(\la)}\right)&0 \\
0& 0_{(p-r)\times (m-r)}
\end{array}\right]
\end{equation}
where, for $i=1,\ldots, r$, $\frac{\epsilon_i(\la)}{\psi_i(\la)}$ are nonzero irreducible rational functions, $\epsilon_i(\la)$ and $\psi_i(\la)$ are monic (leading coefficient equal to 1) polynomials which are either constants or whose roots are in $\Sigma$ and $\epsilon_1(\la)\mid\cdots\mid\epsilon_r(\la)$ while $\psi_r(\la)\mid\cdots\mid\psi_1(\la)$, where $\mid$ stands for divisibility. We refer to (\ref{eq:globSM}) as the Smith--McMillan form in $\Sigma$ of $G(\la)$. When we take $\Sigma=\efe$, we obtain the (finite) Smith--McMillan form of $G(\la)$, i.e., the classical Smith--McMillan form of $G(\la)$.  In this case, if $G(\la)$ is polynomial then $\psi_1(\la)=\cdots =\psi_r(\la)=1,$ $\epsilon_1(\la),\ldots,\epsilon_r(\la)$ are the invariant polynomials of $G(\la)$, and (\ref{eq:globSM}) is called the Smith normal form of $G(\la)$.

Notice that the Smith--McMillan form of a rational matrix in a nonempty set $\Sigma\subseteq\F$ is invariant under multiplication by regular rational matrices in $\Sigma,$ i.e., under equivalence in $\Sigma.$ Analogously, the Smith--McMillan form at $\infty$ is invariant under multiplication by biproper matrices, i.e., under equivalence at $\infty.$

The next result shows that the equivalence of rational matrices in nonempty sets is a local property.

\begin{prop}\label{local property} Let $\Sigma\subseteq\F$ be nonempty. Two rational matrices of the same size
are equivalent in $\Sigma$ if and only if they are equivalent at each $\la_0\in\Sigma.$
\end{prop}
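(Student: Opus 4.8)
The plan is to prove the two implications separately. The forward direction is immediate: if $G(\la)$ and $H(\la)$ are equivalent in $\Sigma$, then by Definition \ref{def:equivalent}(iii) there exist rational matrices $R_1(\la)$ and $R_2(\la)$ that are regular in $\Sigma$ with $R_1(\la)G(\la)R_2(\la)=H(\la)$; but ``regular in $\Sigma$'' means regular at each $\la_0\in\Sigma$ (Definition \ref{def:regular}(iii)), so the same pair $R_1(\la),R_2(\la)$ witnesses equivalence at every individual $\la_0\in\Sigma$. Hence the interesting content is the converse.

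For the converse, I would first reduce both matrices to a common normal form. Assume $G(\la)$ and $H(\la)$ have the same size $p\times m$ and are equivalent at each $\la_0\in\Sigma$; since equivalence at a point preserves normal rank, they have the same normal rank $r$. Now invoke the Smith--McMillan form in $\Sigma$ described in \eqref{eq:globSM}: there are matrices $U_1(\la),U_2(\la)$ regular in $\Sigma$ taking $G(\la)$ to its Smith--McMillan form $D_G(\la)=\diag(\epsilon_1/\psi_1,\ldots,\epsilon_r/\psi_r)\oplus 0$ in $\Sigma$, and likewise $V_1(\la),V_2(\la)$ regular in $\Sigma$ taking $H(\la)$ to $D_H(\la)$. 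It therefore suffices to show that $D_G(\la)=D_H(\la)$, i.e., that the local data $\{\epsilon_i(\la)/\psi_i(\la)\}$ agree; once this is established, composing the four transformation matrices (which are regular in $\Sigma$) yields the desired global equivalence in $\Sigma$ between $G(\la)$ and $H(\la)$.

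To see that $D_G(\la)=D_H(\la)$, I would compare them one point at a time. Fix $\la_0\in\Sigma$. By hypothesis $G(\la)\sim H(\la)$ at $\la_0$, and each of $G,H$ is equivalent at $\la_0$ to its Smith--McMillan form in $\Sigma$ (the transformations $U_i,V_i$, being regular on all of $\Sigma$, are in particular regular at $\la_0$). Hence $D_G(\la)$ and $D_H(\la)$ are equivalent at $\la_0$. Now the key point is that the invariant orders at $\la_0$ of a diagonal rational matrix $\diag(\epsilon_i(\la)/\psi_i(\la))$ are exactly the exponents $v_{\la_0}(\epsilon_i) - v_{\la_0}(\psi_i)$, where $v_{\la_0}$ denotes the order of vanishing of a polynomial at $\la_0$, suitably sorted; this follows because one can factor out from each diagonal entry the power $(\la-\la_0)^{v_{\la_0}(\epsilon_i)-v_{\la_0}(\psi_i)}$ times a rational function regular at $\la_0$, exhibiting the local Smith--McMillan form \eqref{localsm}. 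Since the invariant orders at $\la_0$ are uniquely determined by the equivalence class at $\la_0$ (as stated after \eqref{localsm}), the multisets $\{v_{\la_0}(\epsilon_i)-v_{\la_0}(\psi_i)\}_{i=1}^r$ coincide for $G$ and $H$. By the interlacing/divisibility chains $\epsilon_1\mid\cdots\mid\epsilon_r$ and $\psi_r\mid\cdots\mid\psi_1$, knowing the sorted multiset of $v_{\la_0}(\epsilon_i)$ and of $v_{\la_0}(\psi_i)$ at every $\la_0$ (note that the $\psi_i$ and $\epsilon_i$ have all their roots in $\Sigma$) pins down each $\epsilon_i(\la)$ and each $\psi_i(\la)$ up to the monic normalization, hence $D_G(\la)=D_H(\la)$.

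The main obstacle is the bookkeeping in this last step: one must carefully argue that the pointwise invariant-order data determine the diagonal entries of the Smith--McMillan form in $\Sigma$ as polynomials/rational functions, using that all relevant roots lie in $\Sigma$ and that the divisibility chains force a consistent assignment of local exponents to the globally-indexed $\epsilon_i,\psi_i$. A clean way to avoid subtleties is to recall that the $i$-th entry $\epsilon_i(\la)/\psi_i(\la)$ can be recovered from products of appropriately chosen $i\times i$ minors (the analogue of the classical determinantal-divisor characterization, localized in $\Sigma$), so that equality of all local invariant orders at all $\la_0\in\Sigma$ immediately gives equality of these ratios of gcd's of minors; this is precisely the content borrowed from \cite[Theorem 2.3]{AmMaZa15}. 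With $D_G=D_H$ in hand, the global equivalence in $\Sigma$ follows by composing regular-in-$\Sigma$ transformations, completing the proof.
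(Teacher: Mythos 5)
Your proposal is correct, and it is close in spirit to the paper's argument, but the route differs in which canonical form carries the weight. You diagonalize both matrices by their Smith--McMillan forms \emph{in} $\Sigma$, as in \eqref{eq:globSM}, and then prove the two canonical forms are literally equal, by reading off at each $\la_0\in\Sigma$ the local invariant orders $v_{\la_0}(\epsilon_i)-v_{\la_0}(\psi_i)$ and using the divisibility chains, the fact that all roots of the $\epsilon_i,\psi_i$ lie in $\Sigma$, and (this is the point that deserves to be said explicitly) the irreducibility of each $\epsilon_i/\psi_i$, which is what lets you recover $v_{\la_0}(\epsilon_i)$ and $v_{\la_0}(\psi_i)$ separately from their difference. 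The paper instead works with the classical global Smith--McMillan forms $M_G,M_H$ over $\F$ (unimodular transformations), splits each diagonal entry into a factor $f_i$ whose poles and zeros lie in $\Sigma$ and a factor ($g_i$, resp.\ $h_i$) with none there, observes that equality of the local data at the points of $\Sigma$ forces the $f_i$ to be common to both matrices, and then absorbs the discrepancy outside $\Sigma$ into an explicit diagonal correction $R(\la)=\diag(h_1/g_1,\ldots,h_r/g_r,I)$, which is regular in $\Sigma$, so that $M_H=M_G R$ and the equivalence in $\Sigma$ follows by composition. Both arguments hinge on the same fact---the local invariant orders at the points of $\Sigma$ determine the ``$\Sigma$-part'' of the diagonal entries---but the paper's version only needs the classical global form plus the correction matrix, while yours yields the cleaner conclusion $D_G=D_H$ at the price of the bookkeeping you flag, which your determinantal-divisor remark (or the irreducibility observation above) settles. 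One small step to make explicit in your final composition: the inverses of matrices regular in $\Sigma$ are again regular in $\Sigma$, which is what legitimizes writing $H=V_1^{-1}U_1\,G\,U_2V_2^{-1}$ as an equivalence in $\Sigma$.
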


\begin{proof} If two rational matrices are equivalent in $\Sigma$ then, by Definitions \ref{def:equivalent} and \ref{def:regular}, it is straightforward that they are equivalent at each $\la_0\in\Sigma$. For the converse, suppose that $G(\lambda)$ and $H(\la)$ are equivalent at each $\la_0\in\Sigma.$ Then, $G(\lambda)$ and $H(\la)$ have the same local Smith--McMillan forms at each $\la_0\in\Sigma.$ In particular, $G(\lambda)$ and $H(\la)$ have the same pole and zero elementary divisors  at each $\la_0\in\Sigma.$ Let us consider $M_G(\la)$ and $M_H(\la)$ as the global Smith--McMillan forms of $G(\la)$ and $H(\la),$ respectively. Thus, there exist unimodular matrices $U_{i}^{G}(\la),$ $U_i^{H}(\la)$ for $i=1,2,$ such that $G(\la)= U_{1}^{G}(\la) M_G(\la)U_{2}^{G}(\la)$, $H(\la) = U_{1}^{H}(\la)M_H(\la)U_{2}^{H}(\la)$, and we can write
	\begin{equation*}
	\begin{split}
	 M_G(\la) &=\diag\left(
	f_1(\la)g_1(\la) ,\ldots,
	f_r(\la)g_r(\la),
	0_{(p-r)\times (m-r)}\right)\text{, and}\\ M_H(\la) & =\diag\left(
	f_1(\la)h_1(\la) ,\ldots,
	f_r(\la)h_r(\la),0_{(p-r)\times (m-r)}\right),
	\end{split}
	\end{equation*}
where $f_i(\la)$ are rational functions which are either equal to one or have poles and zeros in $\Sigma,$ while $g_{i}(\la)$ and $h_{i}(\la)$ are rational functions that do not have neither poles nor zeros in $\Sigma.$ Let us define $R(\la):=\diag\left(\dfrac{h_1(\la)}{g_1(\la)},\ldots,\dfrac{h_r(\la)}{g_r(\la)}, I_{m-r} \right).$ Hence, $M_H(\la)=M_G(\la)R(\la).$ Therefore, we deduce that $H(\la)=U_{1}^{H}(\la) U_{1}^{G}(\la)^{-1}G(\la) U_{2}^{G}(\la)^{-1}R(\la)U_{2}^{H}(\la),$ and $G(\la)$ and $H(\la)$ are equivalent in $\Sigma$ since the matrices $U_{1}^{H}(\la) U_{1}^{G}(\la)^{-1}$ and $U_{2}^{G}(\la)^{-1}R(\la)U_{2}^{H}(\la)$ are regular in $\Sigma.$
\end{proof}

\section{Polynomial system matrices minimal in subsets of $\F$ and at infinity}\label{sec.polysysmat}
Polynomial system matrices are a classical tool for studying rational matrices. They were introduced by Rosenbrock and are analyzed in detail in \cite{Rosen}. Among them, minimal polynomial system matrices have been used in many problems dealing with rational matrices because they allow to extract all the information about finite poles and zeros. Recently, they have played a fundamental role in developing a rigorous theory of linearizations and strong linearizations of rational matrices \cite{strong}. In this section, we extend the concept of minimal polynomial system matrices from the classical global scenario to a local one.
Some of the definitions in this section can also be found in \cite{cullen1986} expressed in an abstract algebraic language.

\subsection{Polynomial system matrices minimal in subsets of $\F$}
In this section we introduce polynomial system matrices of rational matrices that are locally minimal, and study their properties. Consider the fact that any rational matrix  $G(\la)\in\FF(\la)^{p\times m}$  can be written as
\begin{equation*}\label{s1.eqrealizG}
G(\la)=D(\la)+C(\la)A( \la)^{-1}B(\la)
\end{equation*}
for some polynomial matrices  $A(\la)\in\FF[\la]^ {n\times n},$   $B(\la)\in\FF[\la]^{n\times m}$, $C(\la)\in\FF[\la]^{p\times n}$ and $D(\la)\in\FF[\la]^{p\times m}$
with $A(\la)$ nonsingular if $n>0$ (see \cite{Rosen}). Then the matrix polynomial
\begin{equation}\label{eq:polsysmat}
P(\la)=\begin{bmatrix}
A(\la) & B(\la)\\
-C(\la) & D(\la)
\end{bmatrix}
\end{equation}
is called a polynomial system matrix of $G(\la)$ \cite{Rosen}. That is, $G(\la)$ is the Schur complement of $A(\la)$ in $P(\la)$. In that case, $A(\la)$ is called the state matrix of $P(\la)$ and $G(\la)$ is the transfer function matrix of
$P(\la).$ If $n=0,$ we assume that the matrices $A(\la),$ $B(\la)$ and $C(\la)$ are empty, and $P(\la)=G(\la)=D(\la)$ is a polynomial matrix. We emphasize that the definition of polynomial system matrix of a rational matrix includes a specific partition. Sometimes in this paper a certain polynomial matrix is partitioned in different ways giving rise to different polynomial system matrices of (possibly) different rational matrices. In such cases, we often use expressions as ``$P(\la)$ is a polynomial system matrix of $G(\la)$ with state matrix $A(\la)$'' in order to avoid ambiguities, where the words ``of $G(\la)$'' may be omitted because $P(\la)$ and $A(\la)$ determine $G(\la)$. In the case $n=0$ mentioned above, we will use ``$P(\la)$ is a polynomial system matrix with empty state matrix''. We stress that although in \eqref{eq:polsysmat} the state matrix is in the $(1,1)$-block, it might be a different submatrix of $P(\la)$. In general, the fundamental property defining a polynomial system matrix is that the rational matrix is the Schur complement of the state matrix.

We remark that the relation between the normal ranks of $P(\la)$ and its transfer function matrix $G(\la)$ is
\begin{equation}\label{ranks_rel}
\rank P(\la)= n + \rank G(\la),
\end{equation}
since we can write $P(\la)$ as
\begin{equation*}
P(\la)=\begin{bmatrix}
I_n & 0\\
-C(\la)A(\la)^{-1} & I_p
\end{bmatrix}\begin{bmatrix}
A(\la) & 0\\
0 & G(\la)
\end{bmatrix}\begin{bmatrix}
I_n & A(\la)^{-1}B(\la)\\
0 & I_m
\end{bmatrix}.
\end{equation*}

Next, we introduce two of the main definitions of this work.

\begin{deff}[Polynomial system matrix minimal at a point in $\F$]\label{def_minimalpolsysmat} Let $\la_0\in\efe.$ The polynomial system matrix $P(\la)$ in \eqref{eq:polsysmat}, with $n>0,$ is said to be minimal at $\la_0$ if
	$$\rank\begin{bmatrix} A(\la_0) \\ C(\la_0)\end{bmatrix}=\rank\begin{bmatrix} A(\la_0) & B(\la_0) \end{bmatrix}=n.$$

\end{deff}

\begin{rem} If $P(\la)$ is a polynomial system matrix as in \eqref{eq:polsysmat}, with $n>0,$ then $$\rank\begin{bmatrix} A(\la) \\ C(\la)\end{bmatrix}=\rank\begin{bmatrix} A(\la) & B(\la) \end{bmatrix}=n$$
	since $A(\la)$ is nonsingular. Thus $P(\la)$ is minimal at $\la_0$ if and only if $\la_0$ is neither an eigenvalue of	$\begin{bmatrix} A(\la) \\ C(\la)\end{bmatrix}$ nor of $\begin{bmatrix} A(\la) & B(\la) \end{bmatrix}.$
\end{rem}

\begin{deff}[Polynomial system matrix minimal in a subset of $\F$]\label{def_minimalpolsysmatsubset} Let $\Sigma\subseteq\efe$ be nonempty. The polynomial system matrix $P(\la)$ in \eqref{eq:polsysmat}, with $n>0,$ is minimal in $\Sigma$ if $P(\la)$ is minimal at each point $\la_{0}\in\Sigma.$
\end{deff}

 Observe that Definitions \ref{def_minimalpolsysmat} and \ref{def_minimalpolsysmatsubset} extend to points and subsets of $\F$ the classical definition of minimal, or with least order, polynomial system matrices introduced in \cite{Rosen}. Rosenbrock's definition coincides with Definition \ref{def_minimalpolsysmatsubset} when $\Sigma =\F.$

 \begin{rem} \label{rem:convention} \rm For convenience, if $n=0$ in \eqref{eq:polsysmat}, we adopt the agreement that $P(\la)$ is minimal at every point $\la_{0}\in\F.$
 \end{rem}

 In the next example, we illustrate Definition \ref{def_minimalpolsysmatsubset} with a rational matrix and a polynomial system matrix taken from the recent reference \cite{Saad} dealing with numerical algorithms for solving NLEPs via rational approximation. We advance that we will use the matrices in Example \ref{ex_saad} several times for illustrating different concepts introduced in this paper as well as for establishing a first connection between the theory developed in this paper and NLEPs. In this respect, we emphasize that \cite{Saad} does not mention at all polynomial system matrices, and that the same happens with references \cite{nlep,su-bai-2011}.

 \begin{example}\label{ex_saad}\rm Let $G(\la)$ be a rational matrix of the form
 		\begin{equation}\label{rational_saad}
G (\la) = -B_0 + \la A_0  + \frac{B_1}{\la-\sigma_1} + \cdots + \frac{B_s}{\la-\sigma_s} \in \C(\la)^{p\times p},
 		\end{equation}
 	with $A_0, B_0,\ldots,B_s \in \mathbb{C}^{p \times p},$ $\sigma_1,\ldots,\sigma_s\in\C,$ and $\sigma_i \ne \sigma_j$ if $i \ne j$. Let us consider the linear polynomial matrix
 		$$P(\la) =
 	\left[
 	\begin{array}{cccc|c}
 	(\la - \sigma_1) I & & & & I \\
 	&(\la - \sigma_2) I & & & I \\
 	& & \ddots & & \vdots \\
 	&& &(\la - \sigma_s) I & I \\ \hline
 	-B_1 & -B_2 &\cdots & -B_s & \la A_0 - B_0\\
 	\end{array}
 	\right].$$These matrices are introduced in \cite{Saad} to tackle a NLEP $T(\la) v  = 0,$ in a certain region $\Omega \subseteq \mathbb{C},$ where the matrix $T(\la)$ is of the form
 	$
 		T(\la) = -B_0 + \la A_0 + f_1(\la) A_1 + \cdots + f_q(\la) A_q,
 	$
 		with $A_0,A_1,\ldots,A_q \in \mathbb{C}^{p \times p}$ and
 		$f_i : \Omega \subseteq \mathbb{C} \longrightarrow \mathbb{C},$ $i=1,\ldots , q,$ being scalar functions nonlinear in the variable $\la$ and holomorphic in $\Omega.$ For solving a NLEP of this form, the nonlinear matrix $T(\la)$ is approximated in $\Omega$ by a rational matrix $G(\la)$ as in \eqref{rational_saad}, and $P(\la)$ is considered to linearize $G(\la).$ It is easy to see that $P(\la)$ is, in fact, a linear polynomial system matrix of $G(\la),$ by setting the matrix $\diag((\la-\sigma_1)I,\ldots,(\la-\sigma_s)I)$ as state matrix $A(\la)$ in \eqref{eq:polsysmat}. Moreover, without any assumption, $P(\la)$ is minimal in $\Sigma:=\mathbb{C} \setminus \{\sigma_1 , \ldots , \sigma_s \}.$ In particular, and according to \cite{Saad}, $\Omega$ is a subset of $\Sigma.$ Therefore, $P(\la)$ is minimal in the target set $\Omega.$ For completeness, notice that a polynomial system matrix as $P(\la)$ is minimal in $\C$ if and only if all the matrices $B_1, \ldots, B_s$ are nonsingular. We also emphasize that the form of the rational matrix $G(\la)$ in \eqref{rational_saad} is very particular because it is the sum of a linear polynomial matrix and strictly proper rational matrices with linear denominators, which simplifies considerably working with it from different perspectives. We will consider later more complicated examples.
 \end{example}

The next result provides the pole and zero elementary divisors of a rational matrix $G(\la)$ at any finite point $\la_0\in\efe$ from any polynomial system matrix of $G(\la)$ minimal at $\la_0.$ This result is the counterpart of \cite[Chapter 3, Theorem 4.1]{Rosen} for polynomial system matrices minimal at a finite point instead of polynomial system matrices of least order.

\begin{theo}\label{th:Rosenlocal}
Let $\la_0\in\efe.$ Let $G(\la)\in\FF(\la)^{p\times m}$  and let
	\begin{equation}\label{eq.Pdelambda}
	P(\la)=\begin{bmatrix}
	A(\la) & B(\la)\\-C(\la) &D(\la)
	\end{bmatrix}\in\FF[\la]^{(n+p)\times (n+m)}
	\end{equation}
	be a polynomial system matrix minimal at $\la_0$ whose transfer
	function matrix is $G(\la).$ Then the elementary divisors of $A(\la)$ at $\la_0$ are the pole elementary divisors of $G(\la)$ at $\la_0,$ and the elementary divisors of $P(\la)$ at $\la_0$ are the zero elementary divisors of $G(\la)$ at $\la_0.$
	\end{theo}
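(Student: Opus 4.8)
The plan is to reduce $P(\la)$, through a chain of equivalences at $\la_0$, to an explicit block‑diagonal form from which both assertions are immediate; here $\sim_{\la_0}$ denotes equivalence at $\la_0$, and by the uniqueness of the local Smith--McMillan form at $\la_0$ two matrices equivalent at $\la_0$ share the same invariant orders at $\la_0$, hence the same pole and zero elementary divisors at $\la_0$. I will use three elementary facts. (a) Multiplying $P(\la)$ on the left by $\diag(U,U_p)$ (or, more generally, by a block‑lower‑triangular matrix with these diagonal blocks) and on the right by $\diag(V,U_m)$ (or a block‑upper‑triangular one), with all blocks regular at $\la_0$, yields a (possibly rational) system matrix, minimal at $\la_0$ whenever $P(\la)$ is, with state matrix $UAV$ and transfer function $U_p\,G(\la)\,U_m$. (b) Left/right multiplication by any matrices regular at $\la_0$ — in particular Schur‑complement factorizations whose factors are regular at $\la_0$ — are equivalences at $\la_0$. (c) The invariant orders at $\la_0$ of a block‑diagonal matrix are the sorted union of those of its diagonal blocks, so its pole (resp.\ zero) elementary divisors at $\la_0$ are the union of those of the blocks.

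First I would bring $A(\la)$ to the diagonal form $\diag((\la-\la_0)^{\alpha_1},\ldots,(\la-\la_0)^{\alpha_n})$, $0\le\alpha_1\le\cdots\le\alpha_n$ (its local Smith form at $\la_0$, after absorbing units into the state block), by left and right multiplications on the state block by matrices regular at $\la_0$; then, by further such multiplications, peel off the entries with $\alpha_i=0$ to obtain $P(\la)\sim_{\la_0}\diag(I_q,P'(\la))$, where $q$ is the number of vanishing $\alpha_i$ and $P'(\la)$ is a system matrix of $G(\la)$, minimal at $\la_0$, whose state matrix is diagonal with every diagonal entry divisible by $\la-\la_0$. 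Since $A(\la)$ and the state matrix of $P'(\la)$ have the same elementary divisors at $\la_0$, and so do $P(\la)$ and $P'(\la)$, it suffices to prove the theorem for $P'(\la)$; renaming, I assume $A(\la)=\diag((\la-\la_0)^{\alpha_1},\ldots,(\la-\la_0)^{\alpha_n})$ with all $\alpha_i\ge 1$, so $A(\la_0)=0$. Then Definition~\ref{def_minimalpolsysmat} forces $B(\la_0)$ to have full row rank $n$ and $C(\la_0)$ full column rank $n$, whence there are rational matrices $W_B(\la)$, $W_C(\la)$ regular at $\la_0$ with $B(\la)W_B(\la)=\begin{bmatrix} I_n & 0\end{bmatrix}$ and $W_C(\la)C(\la)=\begin{bmatrix} I_n\\ 0\end{bmatrix}$; replacing $P(\la)$ by $\diag(I_n,W_C)\,P(\la)\,\diag(I_n,W_B)$ and using (a), I reduce to
\[
P(\la)=\begin{bmatrix} A(\la) & I_n & 0\\ -I_n & D_{11}(\la) & D_{12}(\la)\\ 0 & D_{21}(\la) & D_{22}(\la)\end{bmatrix},\qquad A(\la_0)=0,
\]
whose transfer function $\widetilde G(\la)=\begin{bmatrix} A(\la)^{-1}+D_{11}(\la) & D_{12}(\la)\\ D_{21}(\la) & D_{22}(\la)\end{bmatrix}$ has the same pole and zero elementary divisors at $\la_0$ as $G(\la)$.

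Because $A(\la_0)=0$, the matrix $I+AD_{11}$ is regular at $\la_0$, so $(A^{-1}+D_{11})^{-1}=(I+AD_{11})^{-1}A$ is defined at $\la_0$; hence the Schur‑complement factorization of $\widetilde G(\la)$ with respect to its $(1,1)$ block uses only factors regular at $\la_0$ and gives $\widetilde G(\la)\sim_{\la_0}\diag(H_{11},H_{22})$, where $H_{11}=A^{-1}+D_{11}=A^{-1}(I+AD_{11})$ and $H_{22}=D_{22}-D_{21}(I+AD_{11})^{-1}AD_{12}$. Now $H_{22}$ is defined at $\la_0$, so has no poles there, while $H_{11}\sim_{\la_0}A^{-1}$ (since $I+AD_{11}$ is regular at $\la_0$), and the invariant orders of $A^{-1}$ at $\la_0$ are the numbers $-\alpha_i$, all negative. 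By (c), the pole elementary divisors of $G(\la)$ at $\la_0$ equal those of $H_{11}$, that is $\{(\la-\la_0)^{\alpha_i}\}_{i=1}^{n}$, which are exactly the elementary divisors of $A(\la)$ at $\la_0$; this is the first assertion. Moreover $H_{11}$ has no positive invariant orders at $\la_0$, so the zero elementary divisors of $G(\la)$ at $\la_0$ are precisely the elementary divisors of $H_{22}$ at $\la_0$.

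Finally, to match the elementary divisors of $P(\la)$ at $\la_0$ with those of $H_{22}$, I would use the unit block $I_n$ in position $(1,2)$ of the normal form to clear, by left and right multiplications by matrices regular at $\la_0$, the block $A(\la)$ in position $(1,1)$ and the remaining entries of the second block column, and then interchange the first two block columns, obtaining $P(\la)\sim_{\la_0}\diag\big(I_n,\,Q(\la)\big)$ with $Q(\la)=\begin{bmatrix}-(I_n+D_{11}A) & D_{12}\\ -D_{21}A & D_{22}\end{bmatrix}$. Since $-(I_n+D_{11}A)$ equals $-I_n$ at $\la_0$ and is therefore invertible at $\la_0$, a Schur‑complement reduction with factors regular at $\la_0$ yields $P(\la)\sim_{\la_0}\diag\big(I_n,\,-(I_n+D_{11}A),\,K(\la)\big)$ with $K=D_{22}-D_{21}A(I+D_{11}A)^{-1}D_{12}$; the push‑through identity $A(I+D_{11}A)^{-1}=(I+AD_{11})^{-1}A$ gives $K=H_{22}$, and the two blocks invertible at $\la_0$ contribute no elementary divisors at $\la_0$. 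Hence the elementary divisors of $P(\la)$ at $\la_0$ are those of $H_{22}$ at $\la_0$, which by the previous step are the zero elementary divisors of $G(\la)$ at $\la_0$, as required. I expect the reduction to normal form to be the main obstacle: one must verify that every step is an equivalence at $\la_0$ preserving minimality at $\la_0$, and in particular that, once $A(\la_0)=0$, minimality genuinely forces $B(\la_0)$ and $C(\la_0)$ to have full rank, so that the matrices $W_B,W_C$ regular at $\la_0$ exist; the Schur‑complement algebra is then routine, the key point being that $(A^{-1}+D_{11})^{-1}$ and $(I+AD_{11})^{-1}$ are defined at $\la_0$.
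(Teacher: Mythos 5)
Your proof is correct, but it follows a genuinely different route from the one in the paper. The paper's proof never computes the local pole--zero structure directly: it builds rational multipliers $H_1(\la)=S(\la)^{-1}U(\la)$ and $H_2(\la)=\widetilde V(\la)\widetilde S(\la)^{-1}$ from the Smith normal forms of $\begin{bmatrix} A(\la) & B(\la)\end{bmatrix}$ and $\begin{bmatrix} H_1(\la)A(\la) \\ -C(\la)\end{bmatrix}$, shows these are regular at $\la_0$ precisely because of minimality at $\la_0$, and checks that $\widetilde P(\la)=\diag(H_1,I)\,P(\la)\,\diag(H_2,I)$ is minimal in all of $\F$, so that Rosenbrock's global theorem \cite[Chapter 3, Theorem 4.1]{Rosen} can be invoked and then transported back through the equivalences at $\la_0$. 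You instead give a self-contained local argument: you normalize the state matrix to its local Smith form at $\la_0$, peel off the invertible part, use minimality to put $B$ and $C$ into the forms $\begin{bmatrix} I_n & 0\end{bmatrix}$ and $\begin{bmatrix} I_n \\ 0\end{bmatrix}$, and then read off both the pole structure (from the block $A^{-1}+D_{11}\sim_{\la_0}A^{-1}$) and the zero structure (from the common Schur complement $H_{22}$ appearing in both $\wh G(\la)$ and $P(\la)$) by explicit Schur-complement factorizations whose factors are regular at $\la_0$. The key verifications you flag do hold: once $A(\la_0)=0$, minimality at $\la_0$ forces $B(\la_0)$ and $C(\la_0)$ to have full row and column rank respectively, and $(I+AD_{11})^{-1}$ is defined at $\la_0$ because $A(\la_0)=0$. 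What the paper's route buys is brevity, by outsourcing the structural content to Rosenbrock's theorem; what yours buys is independence from that theorem (which your argument essentially re-proves as the case $\Sigma=\F$) and a transparent explanation of \emph{why} the state matrix carries exactly the local poles and the system matrix exactly the local zeros. The only price is bookkeeping: each reduction must be checked to be an equivalence at $\la_0$ that preserves minimality, the elementary divisors of the state block, and the transfer function up to equivalence at $\la_0$, and after the normalization the blocks $D_{ij}$ become rational rather than polynomial, which is harmless for the local argument but should be said explicitly.
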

	
	\begin{proof}
		Let us consider the Smith normal form of $\begin{bmatrix}
		A(\la) & B(\la) \end{bmatrix}.$ Namely, $$U(\la) \begin{bmatrix}
		A(\la) & B(\la) \end{bmatrix} V(\la)= \begin{bmatrix}
		S(\la) & 0 \end{bmatrix},$$ with $U(\la)$ and $V(\la)$ unimodular matrices. Observe that $S(\la)\in\F[\la]^{n\times n}$ is invertible as a rational matrix since $\rank\begin{bmatrix} A(\la) & B(\la) \end{bmatrix}=n.$ We set $H_{1}(\la):=S(\la)^{-1}U(\la).$
		Since $P(\la)$ is minimal at $\la_0,$ $S(\la)$ has no zeros at $\la_0.$ Therefore, $H_{1}(\la)$ is regular at $\la_0.$ Moreover, $\begin{bmatrix}
			H_{1}(\la)A(\la) & H_1(\la)B(\la) \end{bmatrix}$ is a polynomial matrix, as it is equal to $\begin{bmatrix}
				I_n & 0 \end{bmatrix}V(\la)^{-1},$ has full row rank, and has no zeros in $\F.$ Now, let us consider the Smith normal form of the polynomial matrix $\begin{bmatrix}
		H_{1}(\la)A(\la) \\-C(\la)
		\end{bmatrix}.$ Namely, $$\widetilde{U}(\la)\begin{bmatrix}
		H_{1}(\la)A(\la) \\-C(\la)
		\end{bmatrix}\widetilde{V}(\la)=\begin{bmatrix}
		\widetilde{S}(\la) \\0
		\end{bmatrix},$$ with $\widetilde U(\la)$ and $\widetilde V(\la)$ unimodular matrices. Observe that $\widetilde S(\la)\in\F[\la]^{n\times n}$ is invertible as a rational matrix since $H_{1}(\la)$ is invertible and $\rank\begin{bmatrix} A(\la) \\ C(\la)\end{bmatrix}=n.$  We set $H_{2}(\la):=\widetilde{V}(\la)\widetilde{S}(\la)^{-1}.$ Moreover, the matrix $\begin{bmatrix}
		H_{1}(\la)A(\la)H_{2}(\la) \\-C(\la)H_{2}(\la)
		\end{bmatrix}$ is also polynomial, as it is equal to $\widetilde{U}(\la)^{-1}\begin{bmatrix}
			I_n \\0
		\end{bmatrix}$, has full column rank, and has no zeros in $\F.$ Since $P(\la)$ is minimal at $\la_0$ and $H_1(\la)$ is regular at $\la_0,$ $\widetilde{S}(\la)$ has not zeros at $\la_0.$ Therefore, $H_2(\la)$ is regular at $\la_0.$ Let us define now the polynomial system matrix
			\begin{equation*}
			\widetilde{P}(\la):=\begin{bmatrix}
			H_1(\la) & 0\\0 & I_{p}
			\end{bmatrix}\begin{bmatrix}
			A(\la) & B(\la)\\-C(\la) &D(\la)
			\end{bmatrix}\begin{bmatrix}
			H_2(\la) & 0\\0 & I_{m}
			\end{bmatrix}=\begin{bmatrix}
			H_1(\la)A(\la)H_2(\la) & H_1(\la)B(\la)\\-C(\la)H_2(\la) &D(\la)
			\end{bmatrix}.
			\end{equation*}
		We claim that $\widetilde{P}(\la)$ is a minimal polynomial system matrix in $\F$ or in the classical sense of Rosenbrock \cite{Rosen}. For that, it remains to prove that the matrix $$Z(\la):=\begin{bmatrix}
		H_{1}(\la)A(\la)H_{2}(\la)  &  H_{1}(\la)B(\la)
		\end{bmatrix}$$ has full row rank for all $\la\in\F.$ Let us suppose that there exists $\la_1\in\F$ such that $\rank Z(\la_1) < n.$ On the one hand,
		we know that $$\rank\begin{bmatrix}
		H_{1}(\la_1)A(\la_1)\widetilde{V}(\la_1) & H_{1}(\la_1)B(\la_1)
		\end{bmatrix} = n,$$ since the Smith normal form of $\begin{bmatrix}
		H_{1}(\la)A(\la) & H_1(\la)B(\la) \end{bmatrix}$ is equal to $\begin{bmatrix}
		I_n & 0 \end{bmatrix}$ and $\widetilde{V}(\la)$ is unimodular. On the other hand, we have that
		$$\rank\begin{bmatrix}
		H_{1}(\la_1)A(\la_1)\widetilde{V}(\la_1) & H_{1}(\la_1)B(\la_1)
		\end{bmatrix} =\rank\left( Z(\la_1) \begin{bmatrix}
		\widetilde{S}(\la_1) & 0\\0 & I_{m}
		\end{bmatrix}\right)\leq \rank Z(\la_1) < n,$$
		which is a contradiction. Therefore, $\widetilde{P}(\la)$ is a minimal polynomial system matrix. Its transfer
		function matrix is $G(\la).$ Then, by \cite[Chapter 3, Theorem 4.1]{Rosen}, we know that the zero elementary divisors of $G(\la)$ are the elementary divisors of $\widetilde{P}(\la),$ and that the pole elementary divisors of $G(\la)$ are the elementary divisors of $H_1(\la)A(\la)H_2(\la).$ Finally, the result follows by taking into account that the matrices $P(\la)$ and $\widetilde{P}(\la)$ are equivalent at $\la_0,$ and that the matrices $A(\la)$ and $H_1(\la)A(\la)H_2(\la)$ are also equivalent at $\la_0,$ since $H_{1}(\la)$ and $H_{2}(\la)$ are both regular at that point.
	\end{proof}
	
	Theorem \ref{th:Rosenlocal} can be extended to any subset of $\F$ in a natural way, by applying this theorem to every point of that subset.
	
	\begin{theo}\label{th:Rosenlocalsubset}
		Let $\Sigma\subseteq\efe$ be nonempty. Let $G(\la)\in\FF(\la)^{p\times m}$ and let
		\begin{equation*}\label{eq.Pdelambda2}
		P(\la)=\begin{bmatrix}
		A(\la) & B(\la)\\-C(\la) &D(\la)
		\end{bmatrix}\in\FF[\la]^{(n+p)\times (n+m)}
		\end{equation*}
		be a polynomial system matrix minimal in $\Sigma$ whose transfer
		function matrix is $G(\la).$ Then the elementary divisors of $A(\la)$ in $\Sigma$ are the pole elementary divisors of $G(\la)$ in $\Sigma,$ and the elementary divisors of $P(\la)$ in $\Sigma$ are the zero elementary divisors of $G(\la)$ in $\Sigma.$
	\end{theo}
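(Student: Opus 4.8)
The plan is to deduce the statement from its pointwise version, Theorem~\ref{th:Rosenlocal}, and then to reassemble the local information into information valid throughout $\Sigma$. By Definition~\ref{def_minimalpolsysmatsubset}, saying that $P(\la)$ is minimal in $\Sigma$ means exactly that $P(\la)$ is minimal at every $\la_0\in\Sigma$. Hence, for each fixed $\la_0\in\Sigma$, Theorem~\ref{th:Rosenlocal} applies to $P(\la)$ at $\la_0$ and gives that the elementary divisors of $A(\la)$ at $\la_0$ are the pole elementary divisors of $G(\la)$ at $\la_0$, and that the elementary divisors of $P(\la)$ at $\la_0$ are the zero elementary divisors of $G(\la)$ at $\la_0$. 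The point is that this identification holds simultaneously for all $\la_0\in\Sigma$.

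It then remains to observe that the elementary divisors ``in $\Sigma$'' of any polynomial or rational matrix are precisely the aggregate, over all $\la_0\in\Sigma$, of its elementary divisors at $\la_0$. For a polynomial matrix such as $A(\la)$ or $P(\la)$ this is immediate from comparing its Smith form in $\Sigma$ (the case $\psi_1=\cdots=\psi_r=1$ of~\eqref{eq:globSM}) with its local Smith form at each $\la_0\in\Sigma$: the invariant factors $\epsilon_i(\la)$ are monic polynomials with roots in $\Sigma$, and for each such root $\la_0$ the multiplicity of $(\la-\la_0)$ in $\epsilon_i(\la)$ is exactly the $i$-th invariant order of the matrix at $\la_0$, the divisibility chain $\epsilon_1\mid\cdots\mid\epsilon_r$ guaranteeing that these exponents are ordered consistently with~\eqref{localsm}. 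For the rational matrix $G(\la)$ the same bookkeeping applied to~\eqref{eq:globSM} (now with the $\psi_i(\la)$ possibly nontrivial but still with roots in $\Sigma$) shows that the pole, respectively zero, elementary divisors of $G(\la)$ in $\Sigma$ are obtained by collecting, over all $\la_0\in\Sigma$, the pole, respectively zero, elementary divisors of $G(\la)$ at $\la_0$; here one uses that equivalence in $\Sigma$ is a local property (Proposition~\ref{local property}), so that the local Smith--McMillan form of $G(\la)$ at each $\la_0\in\Sigma$ is faithfully encoded by the Smith--McMillan form of $G(\la)$ in $\Sigma$. Combining this with the pointwise identifications of the previous paragraph yields the claim: the elementary divisors of $A(\la)$ in $\Sigma$ are the pole elementary divisors of $G(\la)$ in $\Sigma$, and the elementary divisors of $P(\la)$ in $\Sigma$ are the zero elementary divisors of $G(\la)$ in $\Sigma$.

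I do not expect any genuine obstacle: this is a ``globalization by localization'' statement, and the only care needed is the purely notational matter of spelling out the standard dictionary between the Smith (Smith--McMillan) form in $\Sigma$ and the family of local Smith (local Smith--McMillan) forms at the points of $\Sigma$. If one prefers to avoid invoking that dictionary, an alternative is to repeat the argument of Theorem~\ref{th:Rosenlocal} verbatim with ``minimal at $\la_0$'' replaced everywhere by ``minimal in $\Sigma$'': the auxiliary matrices $H_1(\la)$ and $H_2(\la)$ built there are then regular in $\Sigma$ rather than at a single point, so $P(\la)$ is equivalent in $\Sigma$ to a polynomial system matrix $\widetilde P(\la)$ minimal in the classical sense of Rosenbrock and $A(\la)$ is equivalent in $\Sigma$ to $H_1(\la)A(\la)H_2(\la)$, and one concludes via \cite[Chapter~3, Theorem~4.1]{Rosen}. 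Either route is routine, since the substantive work has already been done in Theorem~\ref{th:Rosenlocal}. As a side remark, taking $\Sigma=\F$ recovers the classical Rosenbrock result.
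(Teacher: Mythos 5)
Your proposal is correct and follows exactly the route the paper takes: the paper states that Theorem~\ref{th:Rosenlocalsubset} follows by applying Theorem~\ref{th:Rosenlocal} at every point of $\Sigma$ and gives no further proof. Your additional spelling-out of the dictionary between the Smith (Smith--McMillan) form in $\Sigma$ and the local forms at each $\la_0\in\Sigma$ is the routine bookkeeping the paper leaves implicit, so there is no substantive difference.
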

	
	\begin{example}\rm If Theorem \ref{th:Rosenlocalsubset} is applied to the matrices $G(\la)$ and $P(\la)$ and the set $\Sigma $ in Example \ref{ex_saad}, we obtain immediately that (without any hypothesis) the eigenvalues of $P(\la)$ in $\Sigma$ coincide exactly with the zeros of $G(\la)$ in $\Sigma$, with exactly the same multiplicities (geometric, algebraic and partial). Observe also that all the zeros of $G(\la)$ in $\Sigma$ are, in fact, eigenvalues of $G(\la)$ because the only potential poles of $G(\la)$ are $\sigma_1,\ldots , \sigma_s$. This result is stronger than Lemma 3.1 and Corollary 3.2 in \cite{Saad} from two perspectives: \cite{Saad} deals with determinants and, so, only gives information on algebraic multiplicities, and the requests in \cite{Saad} impose the additional hypothesis that $A_0$ is nonsingular. Note that, under the assumption that all the matrices $B_1,\ldots,B_s$ are nonsingular, we obtain that $P(\la)$ (and $A(\la)$) allows us to obtain the complete information on finite poles and zeros (including all the multiplicities) of $G(\la)$ in $\C.$
		
	\end{example}
	
	\subsection{Polynomial system matrices minimal at infinity}
	 Theorems \ref{th:Rosenlocal} and \ref{th:Rosenlocalsubset} characterize polynomial system matrices that contain the information of the invariant orders at finite points of their transfer functions. The extension of these results for including the information at infinity is an old problem that has been considered in classical papers as, for instance, in \cite{Ver81,VVK79}. However, a satisfactory solution has been found, so far, only for polynomial system matrices with state matrix $A(\la)$ being a linear polynomial matrix and the other blocks $B(\la),$ $C(\la),$ $D(\la)$ being constant matrices. In other cases, recovering the information at infinity requires to embed the polynomial system matrix into a larger matrix. In this section, we propose a new approach for obtaining a counterpart of Theorem \ref{th:Rosenlocal} at infinity. This approach is motivated by the recent work \cite{strong}, but presents relevant differences with respect to \cite{strong}, and is based on the use of ``reversals'' and local equivalences of rational matrices.
	
	In order to develop our counterpart of Theorem \ref{th:Rosenlocal} at infinity, first, we introduce the notion of $g$-reversal of a rational matrix in Definition \ref{reversal}, where $g$ is any integer. In this definition we will use, for a particular value of $g,$ the well-known fact that any rational matrix $G(\la)\in\FF(\la)^{p\times m}$ can be uniquely written as
	\begin{equation}\label{eq.polspdec}
	G(\la)=Q(\la)+G_{sp}(\la)
	\end{equation}
	where $Q(\la)\in\FF[\la]^{p\times m}$ is a polynomial matrix and $G_{sp}(\la)\in \F(\lambda)^{p\times m}$ is a
	strictly proper rational matrix. The equation \eqref{eq.polspdec} follows from the Euclidean division for polynomials applied to each entry of $G(\la).$ The matrices $Q(\la)$ and $G_{sp} (\la)$ are called the polynomial part and the strictly proper part of $G(\la)$, respectively. A polynomial matrix $Q(\la)$ is said to have degree $d$ if $d$ is the largest exponent of the variable $\la$ of its entries with nonzero coefficient. In such a case, $d$ is denoted by $\deg (Q(\lambda)).$

	\begin{deff}[$g$-reversal of a rational matrix]\label{reversal}
		Let $G(\lambda)\in\F(\lambda)^{p\times m}$ be a rational matrix, and let $g$ be an integer. We define the $g$-reversal of $G(\la)$ as the rational matrix
		\begin{equation*}
		\rev_{g} G(\lambda):=\lambda^{g}G\left(\dfrac{1}{\lambda}\right).
		\end{equation*}
		Let $G(\lambda)$ be expressed as in \eqref{eq.polspdec}. If $g= \deg (Q(\lambda))$ whenever $G(\la)$ is not strictly proper, or $g=0$ if $G(\la)$ is strictly proper, then the $g$-reversal is called the \textit{reversal of $G(\la)$} and it is often denoted by just $\rev G(\la).$ 	
	\end{deff}

	Note that if $Q(\la)$ in \eqref{eq.polspdec} is a constant matrix, including the zero matrix, then $\rev G(\lambda)=G\left(1/\lambda\right)$. Definition \ref{reversal} extends the definition of $g$-reversal for polynomial matrices (see, for instance, \cite[Definition 2.12]{spectral}). However, we emphasize that in the definition of $g$-reversal of a polynomial matrix considered previously in the literature, $g$ is always taken larger than or equal to the degree of the polynomial matrix, while in Definition \ref{reversal} we only ask for $g$ to be an integer.
	
	Given a polynomial system matrix $P(\la)$ as in \eqref{eq:polsysmat}, we have that
	\begin{equation*}
	\rev P(\la)=\begin{bmatrix}
	\rev_d A(\la) & \rev_d B(\la)\\-\rev_d C(\la) &\rev_d D(\la)
	\end{bmatrix},
	\end{equation*}
	where $d$ is the degree of $P(\la),$ is also a polynomial matrix. Moreover, $\rev_d A(\la)$ is nonsingular since $A(\la)$ is nonsingular. Therefore, $\rev P(\la)$ is also a polynomial system matrix. We now introduce Definition \ref{def:mininfinity} about minimality at infinity of a polynomial system matrix.
	
	\begin{deff}[Polynomial system matrix minimal at infinity]\label{def:mininfinity} The polynomial system matrix $P(\la)$ in \eqref{eq:polsysmat} is minimal at $\infty$ if $\rev P(\la)$ is minimal at $0.$
	\end{deff}

\begin{example}\label{ex_saad_inf}\rm The polynomial system matrix $P(\la)$ with transfer function matrix $G(\la)$ in Example \ref{ex_saad} is minimal at $\infty$ since	{\small	  \[
\mbox{rev} P(\la) =
		\left[
		\begin{array}{cccc|c}
		(1 - \la \sigma_1) I & & & & \la I \\
		&(1 - \la \sigma_2) I & & & \la I \\
		& & \ddots & & \vdots \\
		&& &(1 - \la \sigma_s) I & \la I \\ \hline
		-\la B_1 & -\la B_2 &\cdots & -\la B_s & A_0 - \la B_0\\
		\end{array}
		\right]
	\]}
	is, obviously, minimal at $0.$
	
\end{example}

	\begin{rem}\label{rem:minimalinf}\rm A polynomial system matrix $P(\la)$ as in \eqref{eq:polsysmat}, with $\deg (P(\la))=d$ and $n>0,$ is minimal at $\infty$ if and only if
		$$\rank\begin{bmatrix} \rev_d A(0) \\ \rev_d C(0)\end{bmatrix}=\rank\begin{bmatrix}  \rev_d A(0) & \rev_d B(0) \end{bmatrix}=n.$$ More precisely, let $A_d,$ $B_d,$ $C_d$ and $D_d$ be the matrix coefficients of $\la^d$ in $A(\la),$ $B(\la),$ $C(\la)$ and $D(\la),$ respectively. Then the fact of $P(\la)$ being minimal at $\infty$ is equivalent to
		 $$\rank\begin{bmatrix} A_d\\ C_d\end{bmatrix}=\rank\begin{bmatrix} A_d & B_d \end{bmatrix}=n.$$
		Notice that if $d=0$ then $P(\la)$ is a constant polynomial system matrix, and $A_0$ must be invertible. Therefore, in this case, the rank condition above is automatically satisfied, and $P(\la)$ is minimal at $\infty.$
	\end{rem}
	Theorem \ref{th:Rosenlocalinf} is essentially the counterpart of Theorem \ref{th:Rosenlocal} at infinity. We state it in terms of reversals and their elementary divisors at $0$ as we only have defined elementary divisors for finite points. The implications of Theorem \ref{th:Rosenlocalinf} on the structure at infinity are made explicit in Theorem \ref{th:Rosenlocalinf2}.
	
		\begin{theo}\label{th:Rosenlocalinf}
		Let $G(\la)\in\FF(\la)^{p\times m}$ and let
		\begin{equation*}
		P(\la)=\begin{bmatrix}
		A(\la) & B(\la)\\-C(\la) &D(\la)
		\end{bmatrix}\in\FF[\la]^{(n+p)\times (n+m)}
		\end{equation*}
		be a polynomial system matrix of degree $d$ minimal at $\infty$ whose transfer
		function matrix is $G(\la).$ Then the elementary divisors of $\rev_d A(\la)$ at $0$ are the pole elementary divisors of $\rev_d G(\la)$ at $0,$ and the elementary divisors of $\rev P(\la)$ at $0$ are the zero elementary divisors of $\rev_ d G(\la)$ at $0.$
	\end{theo}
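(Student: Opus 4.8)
The plan is to deduce this theorem from Theorem \ref{th:Rosenlocal} applied at the point $\la_0=0$ to the reversed polynomial system matrix $\rev P(\la)$. The whole argument is essentially the change of variable $\la\mapsto 1/\la$ together with careful bookkeeping of the scaling factor $\la^d$.

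First I would record three facts about $\rev P(\la)$. (a) By Definition \ref{def:mininfinity}, the hypothesis that $P(\la)$ is minimal at $\infty$ says exactly that $\rev P(\la)$ is minimal at $0$. (b) As already observed in the paragraph preceding Definition \ref{def:mininfinity}, $\rev P(\la)=\begin{bmatrix}\rev_d A(\la) & \rev_d B(\la)\\ -\rev_d C(\la) & \rev_d D(\la)\end{bmatrix}$ is a polynomial matrix and its $(1,1)$-block $\rev_d A(\la)=\la^d A(1/\la)$ is nonsingular because $A(\la)$ is; hence $\rev P(\la)$ is a polynomial system matrix with state matrix $\rev_d A(\la)$. (c) The key computation is to identify its transfer function matrix: using $(\rev_d A(\la))^{-1}=\la^{-d}A(1/\la)^{-1}$, the Schur complement of $\rev_d A(\la)$ in $\rev P(\la)$ is
$$
\rev_d D(\la)+\rev_d C(\la)\,(\rev_d A(\la))^{-1}\,\rev_d B(\la)
=\la^{d}\bigl(D(1/\la)+C(1/\la)A(1/\la)^{-1}B(1/\la)\bigr)=\la^{d}G(1/\la)=\rev_d G(\la),
$$
so $\rev P(\la)$ is a polynomial system matrix, minimal at $0$, with state matrix $\rev_d A(\la)$ and transfer function matrix $\rev_d G(\la)$.

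With these facts in hand I would simply apply Theorem \ref{th:Rosenlocal} with $\la_0=0$ to $\rev P(\la)$: its state matrix $\rev_d A(\la)$ has, at $0$, the pole elementary divisors of $\rev_d G(\la)$ at $0$, and $\rev P(\la)$ itself has, at $0$, the zero elementary divisors of $\rev_d G(\la)$ at $0$, which is precisely the assertion. The degenerate case $n=0$, where $P(\la)=G(\la)=D(\la)$ and $\rev_d A(\la)$ is empty, is covered directly by the convention in Remark \ref{rem:convention} (so that ``minimal at $\infty$'' holds automatically) together with the same application of Theorem \ref{th:Rosenlocal}. I do not expect a genuine obstacle here; the only points demanding a little care are checking that the exponent $d=\deg P(\la)$ is consistently the one used in all three reversals $\rev_d A$, $\rev P$ and $\rev_d G$, and that $\rev_d A(\la)$ stays nonsingular, so that Theorem \ref{th:Rosenlocal} applies verbatim at $0$.
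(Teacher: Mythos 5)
Your argument is correct and follows exactly the paper's own route: identify $\rev P(\la)$ as a polynomial system matrix with state matrix $\rev_d A(\la)$, verify via the Schur complement that its transfer function matrix is $\rev_d G(\la)$, and then apply Theorem \ref{th:Rosenlocal} at $\la_0=0$ using the minimality of $\rev P(\la)$ at $0$. The paper's proof is just a condensed version of this same computation, so nothing more is needed.
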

	\begin{proof}It can be easily proved that the transfer function matrix of $\rev P(\la)$ is $\rev_d G(\la).$ The theorem then follows by applying Theorem \ref{th:Rosenlocal}, since $\rev P(\la)$ is minimal at $0.$ \end{proof}
	Once we have obtained the elementary divisors of the $d$-reversal of a rational matrix at $0$, from one of its polynomial system matrices of degree $d$ minimal at $\infty,$ we can then obtain its invariant orders at infinity as we state in Theorem \ref{th:Rosenlocalinf2}. For proving that, we use Lemma \ref{rem:invariantorders}.
	\begin{lem}\label{rem:invariantorders}\rm Let $G(\la)\in\F(\la)^{p\times m}$ with $\rank G(\la)=r,$ and let $g$ be an integer. Let $e_{1},\ldots,e_{r}$ be the invariant orders of $\rev_g G(\la)$ at $0,$ and let $q_{1},\ldots,q_{r}$ be the invariant orders at infinity of $G(\la).$ Then
		\begin{equation}
		e_{i}=q_{i}+g \quad i=1,\ldots,r.
		\end{equation}	
	\end{lem}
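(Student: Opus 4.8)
The plan is to connect the local Smith--McMillan form of $G(\la)$ at $\infty$ with the local Smith--McMillan form of $\rev_g G(\la)$ at $0$ by a direct substitution of variables. Recall from \eqref{localsm_inf} that $G(\la)$ is equivalent at $\infty$ to $\diag(\la^{-\mu_1},\dots,\la^{-\mu_r})$ padded with zeros, where $\mu_1\le\cdots\le\mu_r$ are the invariant orders at infinity of $G(\la)$; here $q_i = \mu_i$ in the notation of the lemma. By Definition \ref{def:equivalent}(ii) this means there are rational matrices $R_1(\la)$, $R_2(\la)$, regular at $\infty$, with $R_1(\la)\,G(\la)\,R_2(\la)$ equal to that diagonal form. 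First I would substitute $\la\mapsto 1/\la$ throughout this identity: since $R_i(\la)$ is regular at $\infty$, $R_i(1/\la)$ is regular at $0$, and the diagonal block $\diag(\la^{-\mu_i})$ becomes $\diag(\la^{\mu_i})$. Thus $R_1(1/\la)\,G(1/\la)\,R_2(1/\la)$ equals $\diag(\la^{\mu_1},\dots,\la^{\mu_r})$ padded with zeros, i.e.\ $G(1/\la)$ is equivalent at $0$ to this matrix.

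Next I would multiply by the scalar $\la^g$. From Definition \ref{reversal}, $\rev_g G(\la) = \la^g G(1/\la)$, so multiplying the previous identity by $\la^g$ (which is regular at $0$, being a nonzero scalar rational function there — careful: $\la^g$ has a zero or pole at $0$, so I must absorb it into the diagonal block rather than treat it as a ``regular'' factor) gives
\begin{equation*}
R_1(1/\la)\,\bigl(\rev_g G(\la)\bigr)\,R_2(1/\la) = \left[\begin{array}{cc}\diag\left(\la^{\mu_1+g},\dots,\la^{\mu_r+g}\right) & 0\\ 0 & 0_{(p-r)\times(m-r)}\end{array}\right].
\end{equation*}
The left-hand side is an equivalence at $0$ applied to $\rev_g G(\la)$ by matrices $R_1(1/\la)$, $R_2(1/\la)$ regular at $0$. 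The right-hand side is precisely the local Smith--McMillan form \eqref{localsm} of $\rev_g G(\la)$ at $\la_0 = 0$, provided its diagonal exponents are weakly increasing — which holds since $\mu_1\le\cdots\le\mu_r$ forces $\mu_1+g\le\cdots\le\mu_r+g$. By the uniqueness statement accompanying \eqref{localsm}, the invariant orders of $\rev_g G(\la)$ at $0$ are exactly $e_i = \mu_i + g = q_i + g$ for $i=1,\dots,r$, which is the claim.

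The only genuinely delicate point is the handling of the scalar factor $\la^g$: it is \emph{not} a unit at $0$ (it has a zero there for $g>0$ and a pole for $g<0$), so it cannot be hidden inside $R_1(1/\la)$ or $R_2(1/\la)$ without destroying their regularity at $0$; instead it must be multiplied into the diagonal block, which is exactly why the shift by $g$ appears in the invariant orders. One should also note that multiplication by the nonzero scalar $\la^g\in\F(\la)$ does not change the normal rank, so $\rev_g G(\la)$ indeed has normal rank $r$ and the number of invariant orders matches. I would also remark that $R_i(1/\la)$ being regular at $0$ follows directly from Definition \ref{def:regular}(ii) and the definition of $R_i$ being regular at $\infty$. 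Everything else is routine bookkeeping, and no deeper result than the uniqueness of the local Smith--McMillan form is needed.
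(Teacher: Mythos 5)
Your proof is correct and follows essentially the same route as the paper's: substitute $\la\mapsto 1/\la$ in the Smith--McMillan factorization at infinity, observe that the transforming matrices become regular at $0$, multiply by $\la^g$, and invoke uniqueness of the local form. Your explicit remarks about absorbing $\la^g$ into the diagonal block and about the exponents remaining ordered are fine points the paper leaves implicit, but the argument is the same.
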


	\begin{proof}
		From the local Smith--McMillan form at infinity of $G(\la),$ there exist biproper rational matrices $B_{1}(\la)$ and $B_{2}(\la)$ such that
		$$  G(\la)=B_{1}(\la)\diag\left((1/\la)^{q_1},\allowbreak\ldots,(1/\la)^{q_r},0_{(p-r)\times (m-r)}\right)  B_{2}(\la).$$ Let us perform the transformation $\la \longmapsto 1/\la$ on the variable of the equation above. Thus,
		$$G(1/\la)=B_{1}(1/\la)\diag\left(\la^{q_1},\ldots,\la^{q_r},0_{(p-r)\times (m-r)}\right)  B_{2}(1/\la).$$
		By \cite[Lemma 6.9]{AmMaZa15}, $B_{1}(1/\la)$ and $B_{2}(1/\la)$ are regular at $0.$ We now multiply the previous equation by $\la^g,$ and we get that $q_{i}+g$ for $i=1,\ldots,r$  are the invariant orders of $\rev_g G(\la)$ at $0.$
	\end{proof}

		\begin{theo}\label{th:Rosenlocalinf2}
		Let $G(\la)\in\FF(\la)^{p\times m}$ with $\rank G(\la)=r$ and let
			\begin{equation*}\label{eq.Pdelambda3}
			P(\la)=\begin{bmatrix}
			A(\la) & B(\la)\\-C(\la) &D(\la)
			\end{bmatrix}\in\FF[\la]^{(n+p)\times (n+m)}
			\end{equation*}
			be a polynomial system matrix of degree $d$ minimal at $\infty$ whose transfer
			function matrix is $G(\la).$ Let $e_1\leq\cdots\leq e_s$ be the partial multiplicities of $\rev_d A(\la)$ at $0$ and let $\widetilde{e}_1\leq\cdots\leq\widetilde{e}_u$ be the partial multiplicities of $\rev P(\la)$ at $0.$ Then the invariant orders at infinity $q_{1}\leq q_{2}\leq\cdots\leq q_{r}$ of $G(\la)$ are
			 \begin{equation*}
			 (q_{1}, q_{2},\ldots , q_{r})=(-e_{s},-e_{s-1},\ldots ,-e_1,\underbrace{0,\ldots,0}_{r-s-u},\widetilde{e}_{1}, \widetilde{e}_{2},\ldots, \widetilde{e}_{u}) - (d,d,\ldots, d).
			 \end{equation*}
		\end{theo}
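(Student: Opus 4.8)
The plan is to read off everything from the two results that precede the statement: Theorem~\ref{th:Rosenlocalinf}, which already identifies the pole and zero elementary divisors of $\rev_d G(\la)$ at $0$ with those of $\rev_d A(\la)$ and of $\rev P(\la)$, and Lemma~\ref{rem:invariantorders}, which converts the invariant orders of $\rev_d G(\la)$ at $0$ into the invariant orders at infinity of $G(\la)$. So the whole argument is a bookkeeping of signs and of how many invariant orders of $\rev_d G(\la)$ at $0$ are negative, zero and positive; there is no new analytic content.

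First I would note that $\rev_d G(\la)=\la^d G(1/\la)$ has normal rank $r$, because the substitution $\la\mapsto 1/\la$ is an automorphism of $\F(\la)$ and multiplication by the nonzero scalar $\la^d$ does not change normal rank. Hence, by the local Smith--McMillan form at $0$, the matrix $\rev_d G(\la)$ has exactly $r$ invariant orders at $0$; I would write them in nondecreasing order and split them, as in the pole--zero definition, into a (possibly empty) negative block, a zero block, and a (possibly empty) positive block. By definition, the pole partial multiplicities of $\rev_d G(\la)$ at $0$ are the negatives of the entries of the negative block, and the zero partial multiplicities of $\rev_d G(\la)$ at $0$ are the entries of the positive block. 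Now Theorem~\ref{th:Rosenlocalinf} applies, since $P(\la)$ is a polynomial system matrix of degree $d$ minimal at $\infty$ with transfer function $G(\la)$: the elementary divisors of $\rev_d A(\la)$ at $0$ are the pole elementary divisors of $\rev_d G(\la)$ at $0$, so the negative block is exactly $(-e_s,-e_{s-1},\ldots,-e_1)$ (with $s$ entries); and the elementary divisors of $\rev P(\la)$ at $0$ are the zero elementary divisors of $\rev_d G(\la)$ at $0$, so the positive block is exactly $(\widetilde e_1,\widetilde e_2,\ldots,\widetilde e_u)$ (with $u$ entries). Counting, the zero block then has $r-s-u$ entries, so in particular $r-s-u\ge 0$, and the full list of invariant orders of $\rev_d G(\la)$ at $0$, in nondecreasing order, is
\[
\Bigl(-e_s,-e_{s-1},\ldots,-e_1,\ \underbrace{0,\ldots,0}_{r-s-u},\ \widetilde e_1,\widetilde e_2,\ldots,\widetilde e_u\Bigr).
\]
Finally, I would apply Lemma~\ref{rem:invariantorders} with $g=d$: this list equals $(q_1+d,q_2+d,\ldots,q_r+d)$, and subtracting $(d,d,\ldots,d)$ from both sides yields the stated formula.

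The only place that requires a little care is the sign/ordering convention in the pole--zero splitting of the local Smith--McMillan form, namely checking that reversing the nondecreasing list $e_1\le\cdots\le e_s$ and negating it produces a nondecreasing negative block that glues correctly with the zero block and with $\widetilde e_1\le\cdots\le\widetilde e_u$ to give a single nondecreasing sequence of length $r$; and, correspondingly, that the entries match the $q_i$ in the right order after invoking Lemma~\ref{rem:invariantorders}. Once this indexing is written out, no genuine obstacle remains, since all the substantive work has already been done in Theorem~\ref{th:Rosenlocalinf} and Lemma~\ref{rem:invariantorders}.
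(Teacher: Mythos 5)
Your proposal is correct and follows exactly the same route as the paper's own proof: invoke Theorem~\ref{th:Rosenlocalinf} to identify the negative and positive blocks of invariant orders of $\rev_d G(\la)$ at $0$ with $(-e_s,\ldots,-e_1)$ and $(\widetilde e_1,\ldots,\widetilde e_u)$, fill in the $r-s-u$ zeros, and then apply Lemma~\ref{rem:invariantorders} with $g=d$. The only difference is that you spell out the rank and ordering bookkeeping that the paper leaves implicit.
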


	\begin{proof}By Theorem \ref{th:Rosenlocalinf}, we know that $e_{i}$ and $\widetilde{e}_{j}$ with $i=1,\ldots,s$ and $j=1,\ldots,u$ are the pole and zero partial multiplicities of $\rev_d G(\la)$ at $0,$ respectively. Thus, the invariant orders of $\rev_d G(\la)$ at $0$ are $-e_s\leq-e_{s-1}\leq\cdots\leq -e_1 < \underbrace{0=\cdots=0}_{r-s-u} <\widetilde{e}_1\leq\cdots\leq\widetilde{e}_u.$ Then the use of Lemma \ref{rem:invariantorders} completes the proof.
	\end{proof}
	
	\begin{example}\rm
		By combining Theorem \ref{th:Rosenlocalinf2} and Example \ref{ex_saad_inf}, we see that $P(\la),$ in Example \ref{ex_saad}, contains the complete information about the invariant orders at $\infty $ of $G(\la)$ (without imposing any hypothesis). Note that, in this case, $d=1$ and that the $1$-reversal of the state matrix, i.e., $\rev_1 A(\la)=\diag ((1-\la\sigma_1)I,\ldots, (1-\la\sigma_s)I )$, has no partial multiplicities at $0$. This result on the relationship between the infinite structure of $G(\la)$ and the reversal of $P(\la)$ is not mentioned in \cite{Saad}. In this context, it is worth emphasizing that modern references on NLEPs and their rational approximations do not pay attention to the structure at $\infty $, while such structure plays an important role in many classic references of linear system theory and control \cite{Kailath, KaVo,Ver81,VVK79}.	\end{example}
	
	For polynomial system matrices that are minimal at infinity and, also, at every finite point, we state Definition \ref{def:strongly} about strong minimality. This definition has already been introduced in \cite[Definition 3.3]{linearsystems}. However, in \cite{linearsystems} the definition is given in terms of eigenvalues instead of minimality at every point, but both definitions are equivalent.
	
	\begin{deff}[Strongly minimal polynomial system matrix]\label{def:strongly} The polynomial system matrix $P(\la)$ in \eqref{eq:polsysmat} is strongly minimal if it is minimal at each point of $\F\cup\{\infty\}.$
	\end{deff}

We emphasize that, as a consequence of Theorems \ref{th:Rosenlocal} and \ref{th:Rosenlocalinf2}, strongly minimal polynomial system matrices contain all the information about the invariant orders of their transfer function matrices, both at finite points and at infinity.

\section{Local linearizations of rational matrices} \label{sec_local}

 In practice, one is often interested in studying the pole and zero structure of rational matrices not in the whole space $\F\cup \{\infty\}$  but in a particular region (see \cite{guttel-tisseur-2017,nlep,automatic,Saad}). For instance, this happens when a rational eigenvalue problem arises from approximating a nonlinear eigenvalue problem, since the approximation is usually reliable only in a target region not containing poles. As a consequence, the eigenvalues (those zeros that are not poles) of the approximating rational eigenvalue problem need to be computed only in that region. In this scenario, one can use local linearizations of the corresponding rational matrix which contain the information about the poles and zeros in the target region, but might not in the whole space $\F\cup\{\infty\}.$ In addition, they do not satisfy, in general, the conditions of the strong linearizations of rational matrices introduced in \cite{strong}. Thereby local linearizations provide extra flexibility in solving nonlinear eigenvalue problems.

 In this section, we give separately the definitions of linearizations of rational matrices in subsets of $\F$ and at infinity, study their properties and establish connections with the linearizations introduced in \cite{strong}. These linearizations will be useful in order to study the pole and zero structure of rational matrices in different sets containing or not infinity. In particular, and as an application of these definitions, we will study in Section \ref{guttel} the structure of the linearizations that appear in \cite{nlep}.

\subsection{ Linearizations in subsets of $\efe$}
In this subsection we introduce the definition of linearization of a rational matrix in a set not containing infinity and study some of its properties. We start by giving the definition of linearization at a finite point.
\begin{deff}[Linearization at a point in $\efe$]\label{def_pointstronglin}
	Let $G(\la) \in\F(\la)^{p\times m}$ and let $\la_{0}\in\efe.$ Let
	\begin{equation}\label{eq_lin}
	\mathcal{L}(\la)=\left[\begin{array}{cc}
	A_1 \la +A_0 &B_1 \la +B_0\\-(C_1 \la +C_0)&D_1 \la +D_0
	\end{array}\right]\in\F[\la]^{(n+q)\times (n+r)}
	\end{equation} be a linear polynomial system matrix and let
	$$\wh{G}(\la)=(D_1\la+D_0)+(C_1\la+C_0)(
	A_1\la+A_0)^{-1}(B_1\la+B_0)\in\F(\la)^{q\times r}$$ be its transfer function matrix. $\mathcal{L}(\la)$ is a linearization of $G(\la)$ at $\la_{0}$ if the following conditions hold:
	\begin{itemize}
			\item [\rm(a)] $\mathcal{L}(\la)$ is minimal at $\la_{0}$, and
			\item[\rm(b)]there exist nonnegative integers $s_1,s_2$ satisfying $s_1-s_2=q-p=r-m,$ and rational matrices $R_1(\la)\in\F(\la)^{(p+s_1)\times (p+s_1)}$ and
			$R_2(\la)\in\F(\la)^{(m+s_1)\times (m+s_1)}$ regular at $\la_{0}$ such that
			\begin{equation}\label{equivalencia}
R_1(\la)\diag(G(\la),I_{s_1})R_2(\la)=\diag(\wh{G}(\la),I_{s_2}).
			\end{equation}
	\end{itemize}
	
\end{deff}

\begin{rem}\label{extremecases} \rm  Notice that, in Definition \ref{def_pointstronglin}, the following two cases are allowed:
		\begin{enumerate}
			\item {$\wh{G}(\la) = G(\la)$}. Then we just have to check condition $\rm(a)$, since condition $\rm(b)$ is satisfied by setting $R_1(\la) = I_p$, $R_2(\la) = I_m,$ and $s_1=s_2=0$.
			\item $n=0.$ Then it is not necessary to take into account condition $\rm(a)$ (it is automatically satisfied by the agreement in Remark \ref{rem:convention}) and, therefore, we just have to check condition $\rm(b)$ with $\widehat{G} (\la) = D_1 \la + D_0 = \mathcal{L}(\la)$.
		\end{enumerate}
	We remark these extreme cases since they are important in applications, and make Definition \ref{def_pointstronglin} very general.
\end{rem}

We now extend, in a natural way, the notion of linearization at a finite point to linearization in subsets of $\F.$

\begin{deff}[Linearization in a subset of $\F$]\label{def:linsubset} Let $G(\la) \in\F(\la)^{p\times m}$ and let $\Sigma\subseteq\efe$ be nonempty. A linear polynomial system matrix
	$\mathcal{L}(\la)$ is a linearization of $G(\la)$ in $\Sigma$ if $\mathcal{L}(\la)$ is a linearization of $G(\la)$ at each point $\la_{0}\in\Sigma.$
\end{deff}

Since linearizations of rational matrices are, in particular, polynomial system matrices, their definition includes a specific partition. Thus, a fixed linear polynomial matrix (also called a matrix pencil) may be partitioned in different ways giving rise to different linearizations of the same or of different rational matrices, or in different subsets. To deal with different partitions, we will use expressions as ``$\mathcal{L} (\la)$ is a linearization of $G(\la)$ in $\Sigma$ with state matrix $A_1 \la + A_0$'' when it is necessary for avoiding any ambiguity. The expression ``$\mathcal{L} (\la)$ is a linearization of $G(\la)$ in $\Sigma$ with empty state matrix'' will cover the case $n=0$ in \eqref{eq_lin}.

In  condition \eqref{equivalencia}, one can always take $s_1=0$ or $s_2=0,$ according to $p\geq q$ and $m\geq r$ or $ q\geq p$ and $r\geq m$, respectively. This is a consequence of the local Smith--McMillan forms of $\diag(G(\la),I_{s_1})$ and $\diag(\wh{G}(\la),I_{s_2})$ being equivalent to each other at $\la_0$. In the rest of the results of this subsection, we will consider  $s:=s_1\geq 0$ and $s_2=0,$ since it corresponds to the most interesting situation in applications.

\begin{rem} \rm  If we have a linearization of $G(\la)$ in a set $\Sigma$ then, for each point $\mu\in\Sigma$, there exist rational matrices $R_1^\mu(\la)$ and
	$R_2^\mu(\la)$ regular at $\mu$ such that
	$R_1^\mu(\la)\diag(G(\la),I_{s})R_2^\mu(\la)\allowbreak=\wh{G}(\la).$ In principle, for different values of $\mu\in\Sigma,$ the rational matrices $R_1^\mu(\la)$ (respectively, $R_2^\mu(\la)$) may be different from each other, that is, $R_1^\mu(\la)$ (resp., $R_2^\mu(\la)$) depends on $\mu.$ However, Proposition \ref{local property} implies that the existence of $R_1^\mu(\la)$ and $R_2^\mu(\la)$ for each $\mu\in\Sigma$ is equivalent to the existence of two rational matrices $R_1(\la)$ and
	$R_2(\la)$ both regular in $\Sigma$ (and independent of $\mu$) such that
	$R_1(\la)\diag(G(\la),I_{s})R_2(\la)=\wh{G}(\la)$.
\end{rem}

\begin{rem}\label{equiv_finitepoins} \rm When $\Sigma=\efe,$ in Definition \ref{def:linsubset}, condition \eqref{equivalencia} is satisfied with $R_{1}(\la)$ and $R_{2}(\la)$ unimodular matrices. Therefore, a linearization in $\efe,$ or at every point of $\efe,$ is a linearization in the sense of \cite[Definition 3.2]{strong} and vice versa.
\end{rem}

 The next result gives the relation between the invariant orders at a finite point of a rational matrix $G(\la)$ and those of a rational matrix of the form $\diag(G(\la),I_{s}),$ with $s>0$. It is motivated by equation \eqref{equivalencia}.

\begin{lem}\label{lem:partialmultiplicities} Let $G(\la)\in\FF(\la)^{p\times m}$, $\la_{0}\in\F$ and let $\nu_1\leq\cdots\leq \nu_k<0=\nu_{k+1}=\cdots=\nu_{u-1}<\nu_u\leq\cdots\leq \nu_r$ be the invariant orders of $G(\la)$ at $\la_{0}.$ Consider $\widetilde{G}(\la):=\diag(G(\la),I_s)$ with $s>0.$ Then the invariant orders of $\widetilde{G}(\la)$ at $\la_{0}$ are $\widetilde{\nu}_1\leq\cdots\leq \widetilde{\nu}_k<0=\widetilde{\nu}_{k+1}=\cdots=\widetilde{\nu}_{u+s-1}<\widetilde{\nu}_{u+s}\leq\cdots\leq \widetilde{\nu}_{r+s},$ where $\widetilde{\nu}_i=\nu_i$ for $i=1,\ldots,k,$ and $\widetilde{\nu}_{j+s}=\nu_{j}$ for $j={u,\ldots,r}.$
\end{lem}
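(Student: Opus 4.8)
The plan is to reduce everything to the local Smith--McMillan form at $\la_0$. By definition, there exist rational matrices $S_1(\la), S_2(\la)$ regular at $\la_0$ such that $S_1(\la) G(\la) S_2(\la)$ equals the local Smith--McMillan form \eqref{localsm} of $G(\la)$ at $\la_0$, i.e.\ a matrix with diagonal block $\diag((\la-\la_0)^{\nu_1},\ldots,(\la-\la_0)^{\nu_r})$ and a zero block of size $(p-r)\times(m-r)$. Multiplying on both sides by $\diag(S_i(\la), I_s)$, which are again regular at $\la_0$, I obtain that $\widetilde G(\la) = \diag(G(\la), I_s)$ is equivalent at $\la_0$ to $\diag\big((\la-\la_0)^{\nu_1},\ldots,(\la-\la_0)^{\nu_r}, 1,\ldots,1, 0_{(p-r)\times(m-r)}\big)$ where there are $s$ entries equal to $1$ inserted.

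The next step is to observe that the exponents $\nu_1,\ldots,\nu_r$ together with $s$ zeros (the zero exponents coming from the $s$ ones we appended, since $1 = (\la-\la_0)^0$) are, after being sorted, exactly the invariant orders of $\widetilde G(\la)$ at $\la_0$: indeed, $\widetilde G(\la)$ has normal rank $r+s$ by \eqref{ranks_rel}-type reasoning (or simply because adding $I_s$ raises the rank by $s$), its local Smith--McMillan form is unique, and the matrix just exhibited is already diagonal with monomial entries in nondecreasing order once the $s$ new zero exponents are interleaved among the vanishing $\nu_i$'s. So the sorted list of invariant orders of $\widetilde G(\la)$ at $\la_0$ is the sorted concatenation of $(\nu_1,\ldots,\nu_r)$ and $(\underbrace{0,\ldots,0}_{s})$.

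It then remains to identify where the $s$ new zeros land in the ordering. Since $\nu_1\leq\cdots\leq\nu_k<0$ are the negative ones, $\nu_{k+1}=\cdots=\nu_{u-1}=0$ are the zero ones, and $0<\nu_u\leq\cdots\leq\nu_r$ are the positive ones, inserting $s$ additional zeros produces the list $\nu_1\leq\cdots\leq\nu_k < \underbrace{0=\cdots=0}_{(u-1-k)+s} < \nu_u\leq\cdots\leq\nu_r$. Relabelling as $\widetilde\nu_1\leq\cdots\leq\widetilde\nu_{r+s}$, this reads $\widetilde\nu_i = \nu_i$ for $i=1,\ldots,k$ (the negative block is unchanged), the zero block now runs from index $k+1$ to $k+(u-1-k)+s = u+s-1$, and the positive block starts at index $u+s$, with $\widetilde\nu_{j+s} = \nu_j$ for $j=u,\ldots,r$. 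This is precisely the claimed statement.

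There is no real obstacle here; the only care needed is bookkeeping on indices, and making sure to invoke the uniqueness of the local Smith--McMillan form (from the discussion around \eqref{localsm}) to conclude that the diagonal matrix we produced by equivalence at $\la_0$ actually exhibits the invariant orders of $\widetilde G(\la)$. The mild subtlety is purely notational: one must verify that the $s$ inserted zeros merge with the existing zero invariant orders $\nu_{k+1},\ldots,\nu_{u-1}$ rather than splitting off, which is immediate once the whole list is sorted.
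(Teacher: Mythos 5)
Your proof is correct and follows essentially the same route as the paper's: both take the local Smith--McMillan form $M(\la)$ of $G(\la)$ at $\la_0$, pad the regular transformation matrices with $I_s$ to conclude that $\diag(M(\la),I_s)$ is, up to permutation, the local Smith--McMillan form of $\widetilde G(\la)$ at $\la_0$, and then sort the exponents. The index bookkeeping you spell out is left implicit in the paper, but there is no substantive difference.
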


\begin{proof} Let $M(\la):=\diag\left(
		(\la-\la_{0})^{\nu_{1}},\ldots, (\la-\la_{0})^{\nu_{r}},0_{(p-r)\times (m-r)}
\right)$ be the local Smith--McMillan form of $G(\la)$ at $\la_0.$ Then, $G(\la)=R_{1}(\la)M(\la)R_2(\la)$ for some rational matrices $R_{1}(\la)$ and $R_2(\la)$ regular at $\la_0.$ Moreover,
 $\widetilde{G}(\la)=\diag\left(R_{1}(\la),I_s\right)\diag\left(M(\la),I_s\right)\diag\allowbreak\left(R_{2}(\la),I_s\right).$ Therefore, since the matrices $\diag\left(R_{1}(\la),I_s\right)$ and $\diag\left(R_{2}(\la),I_s\right)$ are regular at $\la_0,$ the local Smith--McMillan form of $\widetilde{G}(\la)$ at $\la_0$ is $\diag\left(M(\la),I_s\right)$ up to a permutation.
\end{proof}

Corollary \ref{lemmaspec} and Theorem \ref{theo:spectral} follow from Lemma \ref{lem:partialmultiplicities}. These results state the spectral information that one can obtain from local linearizations. More precisely, Theorem \ref{theo:spectral} is a spectral characterization of local linearizations in the spirit of \cite[Theorem 3.10]{strong}.

\begin{cor}\label{lemmaspec}
	Let $G(\la)\in\FF(\la)^{p\times m}$, $\la_{0}\in\F$ and let
	\[
	\mathcal{L}(\la)=\begin{bmatrix}
	A_1 \la +A_0 &B_1 \la +B_0\\-(C_1 \la +C_0)&D_1 \la +D_0
	\end{bmatrix}\in\efe[\la]^{(n+(p+s))\times (n+(m+s))}
	\]
	be a linear polynomial system matrix minimal at $\la_{0}.$ Let $\wh{G}(\la)$ be the transfer function matrix of $\mathcal{L}(\la).$
	Then $\mathcal{L}(\la)$ is a linearization of $G(\la)$ at $\la_{0}$ if and only if the following two conditions hold:
	\begin{enumerate}
		\item[\rm (a)] $\mbox{\rm rank} \,
		 \wh{G}(\la)  = \mbox{\rm rank} \,  G(\la) + s$, and
		
		\item[\rm (b)]  $G(\la)$ and $\wh{G}(\la)$ have exactly the same pole and zero elementary divisors at $\la_{0}.$
	\end{enumerate}
\end{cor}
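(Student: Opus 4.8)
The plan is to reduce Corollary~\ref{lemmaspec} to the definition of linearization at a point (Definition~\ref{def_pointstronglin}) by showing that, once minimality at $\la_0$ is assumed, conditions $\rm(a)$ and $\rm(b)$ of the corollary are jointly equivalent to the existence of the rational matrices $R_1(\la)$, $R_2(\la)$ regular at $\la_0$ with $R_1(\la)\diag(G(\la),I_s)R_2(\la)=\wh G(\la)$, i.e.\ condition $\rm(b)$ of Definition~\ref{def_pointstronglin} with $s_1=s$, $s_2=0$. The key observation is that equivalence at $\la_0$ is completely captured by the local Smith--McMillan form: two rational matrices of the same size are equivalent at $\la_0$ if and only if they have the same invariant orders at $\la_0$ (this is the uniqueness part of the local Smith--McMillan form recalled after Definition~\ref{def:equivalent}). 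So the whole statement is really a bookkeeping computation comparing invariant orders.

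First I would spell out the ``only if'' direction. Suppose $\mathcal L(\la)$ is a linearization of $G(\la)$ at $\la_0$. Then $\diag(G(\la),I_s)$ and $\wh G(\la)$ are equivalent at $\la_0$, hence they have equal size and equal invariant orders at $\la_0$. Equality of size forces $\rank\wh G(\la)=\rank\diag(G(\la),I_s)=\rank G(\la)+s$, which is $\rm(a)$. For $\rm(b)$, apply Lemma~\ref{lem:partialmultiplicities}: the invariant orders of $\diag(G(\la),I_s)$ at $\la_0$ are those of $G(\la)$ together with $s$ extra zeros inserted among the vanishing ones; the negative invariant orders (which give the pole elementary divisors) and the positive invariant orders (which give the zero elementary divisors) are exactly the same as for $G(\la)$. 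Since $\wh G(\la)$ has the same invariant orders as $\diag(G(\la),I_s)$, its pole and zero elementary divisors at $\la_0$ coincide with those of $G(\la)$, which is $\rm(b)$.

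Next I would do the ``if'' direction, which is the one requiring a little care. Assume $\mathcal L(\la)$ is minimal at $\la_0$ and that $\rm(a)$ and $\rm(b)$ hold. From $\rm(b)$, $G(\la)$ and $\wh G(\la)$ have the same nonzero invariant orders at $\la_0$ (the pole ones being the negative invariant orders, the zero ones the positive); from $\rm(a)$ together with $\rm(b)$, the number of vanishing invariant orders of $\wh G(\la)$ at $\la_0$ equals $s$ plus the number of vanishing invariant orders of $G(\la)$ — here one must check the counting: $\rank\wh G=\rank G+s$ gives the total count of invariant orders of $\wh G$ as $r+s$ if $r=\rank G$, and since the numbers of negative and of positive ones agree with $G$ by $\rm(b)$, the number of zeros must go up by exactly $s$. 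By Lemma~\ref{lem:partialmultiplicities}, this is precisely the list of invariant orders of $\diag(G(\la),I_s)$ at $\la_0$. Hence $\wh G(\la)$ and $\diag(G(\la),I_s)$ have the same local Smith--McMillan form at $\la_0$, so they are equivalent at $\la_0$: there exist $R_1(\la)$, $R_2(\la)$ of the appropriate sizes, both regular at $\la_0$, with $R_1(\la)\diag(G(\la),I_s)R_2(\la)=\wh G(\la)$. Together with the assumed minimality at $\la_0$, this is exactly Definition~\ref{def_pointstronglin}, so $\mathcal L(\la)$ is a linearization of $G(\la)$ at $\la_0$.

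The only genuinely delicate point is the index bookkeeping in the ``if'' direction: one must be careful that ``same pole and zero elementary divisors'' plus the rank condition $\rm(a)$ pins down the full (ordered) list of invariant orders, including the correct multiplicity of the zero invariant order, rather than merely the nonzero part. This is handled cleanly by counting: $\rm(a)$ fixes the total number of invariant orders, $\rm(b)$ fixes the negative block and the positive block, and the zero block is whatever is left — which Lemma~\ref{lem:partialmultiplicities} identifies with the zero block of $\diag(G(\la),I_s)$. Everything else is a direct appeal to the uniqueness of the local Smith--McMillan form and to Lemma~\ref{lem:partialmultiplicities}.
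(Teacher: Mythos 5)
Your proof is correct and follows essentially the same route as the paper: both directions reduce to comparing local Smith--McMillan forms via Lemma \ref{lem:partialmultiplicities}, with the converse pinning down the full list of invariant orders of $\wh{G}(\la)$ from (a) and (b) and invoking uniqueness of the local form to conclude equivalence at $\la_0$. (Only a cosmetic slip: in the ``only if'' direction it is the equivalence at $\la_0$, not the equality of sizes, that forces $\rank \wh{G}(\la)=\rank G(\la)+s$.)
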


\begin{proof}
	If $\mathcal{L}(\la)$ is a linearization of $G(\la)$ at $\la_{0}$ then (a) and (b) are satisfied by Lemma \ref{lem:partialmultiplicities}, since $\diag(G(\la),I_s)$ and $\wh{G}(\la)$ are equivalent at $\la_0.$ For the converse, suppose that $\nu_1\leq\cdots\leq \nu_k<0=\nu_{k+1}=\cdots=\nu_{u-1}<\nu_u\leq\cdots\leq \nu_r$ are the invariant orders of $G(\la)$ at $\la_{0}.$ From (a) and (b), the Smith--McMillan form at $\la_0$ of $\wh{G}(\la)$ must be
	$
	\diag\left((\la-\la_0)^{\nu_1},\ldots,(\la-\la_0)^{\nu_{u-1}},I_s,(\la-\la_0)^{\nu_u},\ldots,(\la-\la_0)^{\nu_r},0_{(p-r)\times(m-r)}\right)
	$. Observe that this is also the Smith--McMillan form at $\la_0$ of $\diag(G(\la),I_s)$, as proved in the previous lemma. Thus, $\diag(G(\la),I_s)$ and $\wh{G}(\la)$ are equivalent at $\la_0$.
\end{proof}

\begin{theo}[Spectral characterization of linearizations at a point in  $\efe$]\label{theo:spectral}
	Let $G(\la)\in\FF(\la)^{p\times m}$, $\la_{0}\in\F$ and let
	\[
	\mathcal{L}(\la)=\begin{bmatrix}
	A_1 \la +A_0 &B_1 \la +B_0\\-(C_1 \la +C_0)&D_1 \la +D_0
	\end{bmatrix}\in\efe[\la]^{(n+(p+s))\times (n+(m+s))}
	\]
	be a linear polynomial system matrix minimal at $\la_{0}.$ Then $\mathcal{L}(\la)$ is a linearization of $G(\la)$ at $\la_0$ if and only if the following three conditions hold:
	\begin{enumerate}
		\item[\rm (a)] $\mbox{\rm rank} \,   \mathcal{L}(\la)  =\mbox{\rm rank} \,  G(\la)+n+s $,
		
		\item[\rm (b)]  the pole elementary divisors of $G(\la)$ at $\la_0$ are the elementary divisors of $A_{1}\la+A_{0}$ at $\la_0,$ and
		
		\item[\rm (c)]  the zero elementary divisors of $G(\la)$ at $\la_0$ are the elementary divisors of $\mathcal{L}(\la)$ at $\la_0.$
	\end{enumerate}
\end{theo}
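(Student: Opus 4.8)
The plan is to derive this characterization directly from Corollary \ref{lemmaspec} by re-expressing its two conditions, which involve the transfer function matrix $\wh{G}(\la)$ of $\mathcal{L}(\la)$, in terms of the pencil $\mathcal{L}(\la)$ itself and its state matrix $A_1\la+A_0$. The key observation is that $\mathcal{L}(\la)$ is a polynomial system matrix (with state matrix $A_1\la+A_0$ of size $n\times n$) that is minimal at $\la_0$ and has transfer function matrix $\wh{G}(\la)\in\F(\la)^{(p+s)\times(m+s)}$, so the results of Section \ref{sec.polysysmat} apply to it.

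Assume first $n>0$. Then Theorem \ref{th:Rosenlocal}, applied to $\mathcal{L}(\la)$ (with $p$ and $m$ there replaced by $p+s$ and $m+s$), shows that the elementary divisors of $A_1\la+A_0$ at $\la_0$ are precisely the pole elementary divisors of $\wh{G}(\la)$ at $\la_0$, and that the elementary divisors of $\mathcal{L}(\la)$ at $\la_0$ are precisely the zero elementary divisors of $\wh{G}(\la)$ at $\la_0$. Moreover, the rank identity \eqref{ranks_rel} for polynomial system matrices gives $\rank\mathcal{L}(\la)=n+\rank\wh{G}(\la)$. With these three identities in hand, condition (a) of the theorem becomes $\rank\wh{G}(\la)=\rank G(\la)+s$, condition (b) becomes ``$G(\la)$ and $\wh{G}(\la)$ have the same pole elementary divisors at $\la_0$'', and condition (c) becomes ``$G(\la)$ and $\wh{G}(\la)$ have the same zero elementary divisors at $\la_0$''. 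Hence (a)$\,\wedge\,$(b)$\,\wedge\,$(c) is exactly the conjunction of conditions (a) and (b) of Corollary \ref{lemmaspec}, and by that corollary this holds if and only if $\mathcal{L}(\la)$ is a linearization of $G(\la)$ at $\la_0$.

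For the remaining case $n=0$, the state matrix is empty, $\mathcal{L}(\la)=D_1\la+D_0=\wh{G}(\la)$ is a polynomial matrix, it is minimal at $\la_0$ by the convention in Remark \ref{rem:convention}, and \eqref{ranks_rel} holds trivially. Condition (b) then reads that $G(\la)$ has no pole at $\la_0$, i.e.\ that $G(\la)$ and $\wh{G}(\la)$ share the same (empty) set of pole elementary divisors at $\la_0$; conditions (a) and (c) translate exactly as before; and the same appeal to Corollary \ref{lemmaspec} finishes the argument. I do not anticipate any serious obstacle: the whole proof is a translation between the pencil and its transfer function matrix via Theorem \ref{th:Rosenlocal} and \eqref{ranks_rel}, and the only place demanding slight care is the degenerate case $n=0$, where Theorem \ref{th:Rosenlocal} is not literally applicable and one must fall back on the conventions for empty state matrices.
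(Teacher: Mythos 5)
Your proof is correct and follows essentially the same route as the paper's: translate conditions (a)--(c) into statements about the transfer function matrix $\wh{G}(\la)$ via the rank identity \eqref{ranks_rel} and Theorem \ref{th:Rosenlocal}, then invoke Corollary \ref{lemmaspec}. The extra attention you give to the case $n=0$ is harmless but not needed in the paper's (more terse) argument, since the relevant conventions make everything go through uniformly.
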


\begin{proof}
	Let $\wh{G}(\la)$ be the transfer function matrix of $\mathcal{L}(\la)$. By \eqref{ranks_rel}, $\mbox{\rm rank} \,   \wh{G}(\la)  =\mbox{\rm rank} \, \mathcal{L}(\la)-n  $. Moreover, by Theorem \ref{th:Rosenlocal}, the pole elementary divisors of $\wh{G}(\la)$ at $\la_0$ are the elementary divisors of $A_1 \la +A_0$ at $\la_0$, and the zero elementary divisors of $\wh{G}(\la)$ at $\la_0$ are the elementary divisors of $\mathcal{L}(\la)$ at $\la_0$. The result follows from Corollary \ref{lemmaspec}.
\end{proof}

It is immediate to obtain counterparts of Corollary \ref{lemmaspec} and Theorem \ref{theo:spectral} for linear polynomial system matrices minimal in sets $\Sigma\subseteq \F$ and for linearizations in $\Sigma.$ We omit to state such results for brevity.

The following proposition is a straightforward consequence of the definition of linearization in a subset of $\efe$ by taking $s_1=s_2=0,$ $R_1(\la)=I_{p},$ $R_{2}(\la)=I_{ m}$ and $G(\la)=\widehat{G}(\la),$ i.e., it corresponds to case 1 in Remark \ref{extremecases}. However, we emphasize this result since it gives a sufficient condition that is easy to verify in order to ensure that a linear polynomial system  matrix is a linearization of a rational matrix.

\begin{prop}\label{sametransfer} 	Let $\Sigma\subseteq\efe$ be nonempty. Let
	\begin{equation}
	\mathcal{L}(\la)=\left[\begin{array}{cc}
	A_1 \la +A_0 &B_1 \la +B_0\\-(C_1 \la +C_0)&D_1 \la +D_0
	\end{array}\right]\in\F[\la]^{(n+q)\times (n+r)}
	\end{equation} be a linear polynomial system matrix and let $\wh{G}(\la)$ be its transfer function matrix. If $\mathcal{L}(\la)$ is minimal in $\Sigma$ then $\mathcal{L}(\la)$ is a linearization of $\wh{G}(\la)$ in $\Sigma.$
\end{prop}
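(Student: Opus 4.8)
The plan is to reduce the statement directly to Definition \ref{def:linsubset}: it suffices to prove that $\mathcal{L}(\la)$ is a linearization of $\wh{G}(\la)$ at every point $\la_0\in\Sigma$, and for each such point to verify the two conditions (a) and (b) of Definition \ref{def_pointstronglin} with the particular choice $G(\la)=\wh{G}(\la)$. This is exactly the extreme situation described in case 1 of Remark \ref{extremecases}, so the argument is essentially an unwinding of the definitions rather than a substantive computation.

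First I would fix an arbitrary $\la_0\in\Sigma$ and check condition (a), which requires $\mathcal{L}(\la)$ to be minimal at $\la_0$. When the state matrix $A_1\la+A_0$ is nonempty, this is immediate from the hypothesis that $\mathcal{L}(\la)$ is minimal in $\Sigma$ together with Definition \ref{def_minimalpolsysmatsubset}; when $n=0$ it holds by the convention in Remark \ref{rem:convention}. So condition (a) is never an issue.

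Next I would check condition (b). Writing $\mathcal{L}(\la)\in\F[\la]^{(n+q)\times(n+r)}$ with $n\times n$ state matrix $A_1\la+A_0$, the transfer function $\wh{G}(\la)$ has size $q\times r$. Taking $G(\la):=\wh{G}(\la)$ forces $p=q$ and $m=r$, so $q-p=r-m=0$; hence one may choose $s_1=s_2=0$, which trivially satisfies $s_1-s_2=q-p=r-m$, and then \eqref{equivalencia} becomes $R_1(\la)\wh{G}(\la)R_2(\la)=\wh{G}(\la)$. This holds with $R_1(\la)=I_{q}$ and $R_2(\la)=I_{r}$, which are constant, hence regular at $\la_0$ (indeed everywhere). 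Therefore $\mathcal{L}(\la)$ is a linearization of $\wh{G}(\la)$ at $\la_0$.

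Since $\la_0\in\Sigma$ was arbitrary, Definition \ref{def:linsubset} gives that $\mathcal{L}(\la)$ is a linearization of $\wh{G}(\la)$ in $\Sigma$, which completes the proof. There is no real obstacle here: condition (b) is satisfied tautologically by the identity matrices, and condition (a) is precisely the hypothesis. The only thing demanding a modicum of care is bookkeeping with the sizes $p,q,m,r$ and the auxiliary integers $s_1,s_2$ in Definition \ref{def_pointstronglin}, so as to be sure that the choice $s_1=s_2=0$ is admissible.
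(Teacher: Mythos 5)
Your proof is correct and follows exactly the route the paper itself indicates: it observes that the proposition is case 1 of Remark \ref{extremecases}, taking $G(\la)=\wh{G}(\la)$, $s_1=s_2=0$, and $R_1(\la)$, $R_2(\la)$ identity matrices, with condition (a) supplied directly by the minimality hypothesis. Nothing is missing.
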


In plain words, any linear polynomial system matrix $\mathcal{L}(\la)$ is a linearization of its transfer function matrix in the sets where $\mathcal{L}(\la)$ is minimal.

\begin{example}\rm  Consider the matrices $G(\la)$ and $P(\la)$ and the set $\Sigma$ in Example \ref{ex_saad}, that were originally introduced in \cite{Saad}. By combining the discussion in Example \ref{ex_saad} with Proposition \ref{sametransfer}, we immediately obtain that $P(\la)$ is a linearization of $G(\la)$ in $\Sigma.$ With a bit more effort, it is also easy to obtain the following stronger result: $P(\la)$ is a linearization of $G(\la)$ in $\C\setminus \Pi $ where $\Pi:=\{\sigma_i :  B_i \text{ is singular for }1\leq i \leq s\}.$
\end{example}

As mentioned in Example \ref{ex_saad}, the form of the rational matrix $G(\la)$ in \eqref{rational_saad} is very particular since its polynomial part and the denominators in the strictly proper part are linear. Thus, we finish this section by discussing in Example \ref{ex_subai} a rational matrix with non linear polynomial part and with a general state space realization of the strictly proper part. For such general representation of rational matrices, an influential companion-like pencil associated to it was introduced in \cite{su-bai-2011}. We will analyze this pencil from three different perspectives.

\begin{example}\label{ex_subai} \rm It is well-known that any rational matrix can be written in the form:
	\begin{equation}\label{rational_polpart}
	G(\la)=D_q\la^q+\cdots + D_1 \la + D_0 + C(\la I_n - A)^{-1}B\in\F(\la)^{p\times m}.
	\end{equation}
By assuming $D_q\neq 0$ with $q\geq 2,$ from the expression above we define the pencil
\begin{equation} \label{lin_subai}
L(\la) =
\left[
\begin{array}{ccccc}
\la D_q + D_{q-1} & D_{q-2} & \cdots & D_0 & -C \\
-I_m & \la I_m & 0 & \cdots & 0 \\

& \ddots & \ddots & \vdots & \vdots \\
&  & -I_m & \la I_m & 0 \\
&  &  & B & \la I_n - A
\end{array}
\right]\, .
\end{equation}
This pencil was introduced in \cite{su-bai-2011} for regular rational matrices and is a particular case of the pencils considered in \cite[Theorem 5.11]{strong} (modulo some permutations). In fact, \cite[Theorem 5.11]{strong} proves that if $L(\la)$ is considered as a polynomial system matrix with state matrix $\la I-A,$ and $L(\la)$ is minimal in $\F,$ then $L(\la)$ is a strong linearization of $G(\la)$ in the sense of \cite[Definition 3.4]{strong} (we will revise this in subsection \ref{comparison}). Thus, under these conditions, $L(\la)$ contains all the information about the poles and zeros of $G(\la).$

We now consider $L(\la)$ from other two points of view different from the one in \cite{strong}. They will correspond to the two extreme cases described in Remark \ref{extremecases}. First, we consider the following regular submatrix of $L(\la),$ obtained by removing the first block row and the penultimate block column:
\begin{equation}\label{ex_statematrix}
A(\la) :=
\left[
\begin{array}{ccccc}
-I_m & \la I_m & 0 & \cdots & 0 \\

& \ddots & \ddots & & \vdots  \\
 &  & -I_m & \la I_m & 0 \\
& &  & -I_m  & 0 \\
&  &   & & \la I_n - A
\end{array}
\right]\in\F[\la]^{((q-1)m + n)\times ((q-1)m + n ) }\, ,
\end{equation}
and we see $L(\la)$ as a polynomial system matrix with state matrix $A(\la).$ That is, once the state matrix is chosen, the other matrices in \eqref{eq:polsysmat} are $D(\la):= D_0,$ $ C(\la):=[-\la D_q - D_{q-1} \allowbreak \quad  -D_{q-2} \quad \cdots \quad -D_1 \quad C ] $ and $B(\la)^{T}:=[  0 \quad \cdots \quad 0 \quad \la I_m  \quad B^{T} ]^{T}.$ With such partition of $L(\la),$ it is easy to see that the transfer function matrix of $L(\la)$ is precisely $G(\la),$ i.e., $D(\la)+ C(\la)A(\la)^{-1}B(\la)=G(\la).$ For that, just take into account that the two last block columns of $A(\la)^{-1}$ are $[  -\la^{q-2} I_m \,\, \cdots \,\, -\la I_m \quad -I_m \quad 0]^{T}$ and $[ 0 \,\, \cdots \,\, 0 \quad (\la I_n -A)^{-T} ]^{T}$. Then, Proposition \ref{sametransfer} guarantees that, without any extra hypothesis, $L(\la)$ is a linearization of $G(\la)$ in $\Omega:=\F\setminus \Lambda, $ where $\Lambda:=\{\la\in\F:\la \text{ is an eigenvalue of }A\}.$ With a bit more effort, it is also easy to see that if $\rank\begin{bmatrix} \la_0 I_n - A \\ C\end{bmatrix}=\rank\begin{bmatrix} \la_0 I_n - A & B \end{bmatrix}=n,$ for all $\la_0\in\Lambda,$ then $L(\la)$ is minimal in $\F$ and, thus, is a linearization of $G(\la)$ in $\F$ with state matrix $A(\la)$ in \eqref{ex_statematrix}. Observe that, if we do not impose any hypothesis of minimality in $\Lambda ,$ and $L(\la)$ is just a linearization in $\Omega,$ then we can not guarantee that $L(\la)$ has any information about the poles of $G(\la)$ since they are necessarily contained in $\Lambda.$ Moreover, the set $\Lambda$ might contain eigenvalues of $G(\la).$ This is not a problem in REPs coming from approximating NLEPs \cite{nlep,automatic,Saad} because, in such cases, the target set is outside $\Lambda .$ However, it is in classical applications of rational matrices \cite{Kailath}.

The second point of view is to consider $L(\la)$ as a linearization of $G(\la)$ in $\Omega$ with empty state matrix. To this purpose, we define the following rational matrices regular at $\Omega$:
\begin{align}\label{eq_V}
V(\la) & :=
\left[
\begin{array}{ccccccc}
\la^{q-1}I_m & 0 & -I_m  \\
\la^{q-2}I_m & 0 & & -I_m  \\
\vdots & \vdots & & & \ddots \\
\la I_m & 0 & & & & & -I_m \\
I_m & 0 & & & & & 0 \\
-(\la I_n - A)^{-1}B & I_n & 0 & & \cdots & & 0
\end{array}
\right]\, . \\ \label{eq_U}
U(\la) & :=
\left[
\begin{array}{cccccc}
I_p & - C & -\la D_q - D_{q-1} & -D_{q-2} & \cdots & -D_1 \\
0 & 0 & I_m & -\la I_m \\
\vdots & \vdots & & \ddots & \ddots\\
0 & 0 & & & I_m & -\la I_m \\
0 & 0 &  & & & I_m \\
0 & \la I_n - A & 0 & \cdots & & 0
\end{array}
\right]\, .
\end{align}
Then, we check that $L(\la)V(\la)=U(\la)\diag (G(\la),I_n,I_{m(q-1)}),$
which means that $L(\la)$ and $\diag (G(\la),I_n,I_{m(q-1)})$ are equivalent in $\Omega$ and, so, that $L(\la)$ is a linearization of $G(\la)$ in $\Omega$ with empty state matrix (recall Remark \ref{extremecases}(2)).
	
\end{example}

The two approaches described in Example \ref{ex_subai} for viewing $L(\la)$ in \eqref{lin_subai} as a linearization of $G(\la)$ in $\Omega$ can be extended with more effort to many other of the pencils described in \cite[Theorem 5.11]{strong}. We postpone these developments to future research to keep this paper concise.

\subsection{Linearizations at infinity and in sets containing infinity}
 Our definition of linearization of a rational matrix at infinity is based on the notion of $g$-reversal of a rational matrix introduced in Definition \ref{reversal}.

\begin{deff}[Linearization at infinity of grade $g$]\label{def:infinity} Let $G(\la)\in\F(\la)^{p\times m}.$ Let
\begin{equation}\label{pencil_inf}
\mathcal{L}(\la)=\left[\begin{array}{cc}
A_1 \la +A_0 &B_1 \la +B_0\\-(C_1 \la +C_0)&D_1 \la +D_0
\end{array}\right]\in\F[\la]^{(n+q)\times (n+r)}
\end{equation} be a linear polynomial system matrix and let
$$\wh{G}(\la)=(D_1\la+D_0)+(C_1\la+C_0)(
A_1\la+A_0)^{-1}(B_1\la+B_0)\in\F(\la)^{q\times r}$$ be its transfer function matrix. Let $g$ be an integer. $\mathcal{L}(\la)$ is a linearization of $G(\la)$ at $\infty$ of grade $g$ if the following conditions hold:
\begin{itemize}
	\item[\rm (a)]$\rev \mathcal{L}(\la)$ is minimal at $0,$ and
	\item[\rm (b)]there exist nonnegative integers $s_1,s_2,$ with $s_1-s_2=q-p=r-m,$ and rational matrices
	$Q_1(\la)\in\F(\la)^{(p+s_1)\times (p+s_1)}$ and $Q_2(\la)\in\F(\la)^{(m+s_1)\times (m+s_1)}$ regular at $0$ such that
	\begin{equation}\label{eqinfty}
	Q_1(\la)\diag(\rev_{g} G(\la),I_{s_1})Q_2(\la)=\diag(\rev_{\ell} \wh{G}(\la),I_{s_2}),
	\end{equation}
	where $\ell=\deg (\mathcal{L}(\la)).$
\end{itemize}
\end{deff}
Observe that Definition \ref{def:infinity} allows, for completeness, the possibility of $\ell=\deg(\mathcal{L}(\la))$ being equal to $0.$ We admit that this case has a very limited interest in applications, since it corresponds to $\mathcal{L}(\la)$ and $\rev_{\ell} \wh{G}(\la)= \wh{G}(\la)$ being constant matrices. However, it includes linearizations at $\infty$ of rational matrices $G(\la)$ such that, for some integer $g,$ $\rev_{g} G(\la)$ has all its invariant orders at zero equal to zero. Moreover, notice that, in any case, $\rev \mathcal{L}(\la)$ is also a linear polynomial system matrix since $\rev_\ell (A_1\la+A_0)$ is nonsingular. We then have the following characterization of linearizations at infinity.
\begin{prop}\label{prop_lininf}
	 A linear polynomial system matrix $\mathcal{L}(\la)$ as in \eqref{pencil_inf} is a linearization of a rational matrix $G(\la)$ at $\infty$ of grade $g$ if and only if $\rev \mathcal{L}(\la)$ is a linearization of $\rev_{g} G(\la)$ at $0.$
\end{prop}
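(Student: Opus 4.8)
The plan is to prove Proposition \ref{prop_lininf} simply by unraveling the two definitions and matching conditions one against the other, since the statement is essentially a restatement of Definition \ref{def:infinity} in the language of Definition \ref{def_pointstronglin} applied at the point $0$. First I would recall that, as observed in the excerpt, $\rev \mathcal{L}(\la)$ is again a linear polynomial system matrix (because $\rev_\ell(A_1\la+A_0)$ is nonsingular when $A_1\la+A_0$ is), so it makes sense to speak of $\rev \mathcal{L}(\la)$ being a linearization at $0$. I would then observe that condition (a) of Definition \ref{def:infinity}, namely that $\rev \mathcal{L}(\la)$ is minimal at $0$, is verbatim condition (a) of Definition \ref{def_pointstronglin} for $\rev \mathcal{L}(\la)$ at the point $\la_0=0$.

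Next I would address condition (b). The key point is that the transfer function matrix of $\rev \mathcal{L}(\la)$ is $\rev_\ell \wh{G}(\la)$, where $\ell=\deg(\mathcal{L}(\la))$ and $\wh{G}(\la)$ is the transfer function matrix of $\mathcal{L}(\la)$; this is the same computation used in the proof of Theorem \ref{th:Rosenlocalinf} (there stated for general polynomial system matrices, here specialized to the linear case, with $\ell$ playing the role of $d$). With this identification, condition (b) of Definition \ref{def:infinity}, i.e.\ the existence of nonnegative integers $s_1,s_2$ with $s_1-s_2=q-p=r-m$ and rational matrices $Q_1(\la),Q_2(\la)$ regular at $0$ satisfying
\begin{equation*}
Q_1(\la)\diag(\rev_{g} G(\la),I_{s_1})Q_2(\la)=\diag(\rev_{\ell} \wh{G}(\la),I_{s_2}),
\end{equation*}
is precisely condition (b) of Definition \ref{def_pointstronglin} for the pencil $\rev \mathcal{L}(\la)$, the rational matrix $\rev_g G(\la)$, and the point $\la_0=0$: the size compatibility $q-p=r-m$ is unchanged under reversal, and the transfer function matrix that must be matched, $\rev_\ell \wh{G}(\la)$, is exactly the one attached to $\rev \mathcal{L}(\la)$. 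Hence the two conditions are logically equivalent, and combining the equivalences of (a) and (b) yields the proposition.

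The only genuinely substantive step is the verification that the transfer function matrix of $\rev \mathcal{L}(\la)$ is $\rev_\ell \wh{G}(\la)$; everything else is a direct translation. I would therefore make this the core of the argument: writing $\mathcal{L}(\la)$ as in \eqref{pencil_inf} with state matrix $A_1\la+A_0$, one has
\begin{equation*}
\rev \mathcal{L}(\la)=\begin{bmatrix}\rev_\ell(A_1\la+A_0) & \rev_\ell(B_1\la+B_0)\\ -\rev_\ell(C_1\la+C_0) & \rev_\ell(D_1\la+D_0)\end{bmatrix},
\end{equation*}
and a short Schur-complement computation (identical in form to the one preceding Definition \ref{def:mininfinity}, but for the reversed blocks) shows its Schur complement with respect to $\rev_\ell(A_1\la+A_0)$ equals $\ell^{\text{th}}$ reversal of the Schur complement of $A_1\la+A_0$ in $\mathcal{L}(\la)$, namely $\rev_\ell \wh{G}(\la)$; this uses only that $(\rev_\ell M)(\la)=\la^\ell M(1/\la)$ distributes over products and inverses up to the appropriate grade bookkeeping. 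I expect no real obstacle here: the main care needed is simply keeping track of the grade $\ell=\deg(\mathcal{L}(\la))$ consistently across the four blocks, which is exactly what makes the reversed object a bona fide polynomial system matrix whose transfer function is $\rev_\ell \wh{G}(\la)$.
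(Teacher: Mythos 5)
Your proposal is correct and follows essentially the same route as the paper: the paper's proof also reduces the statement to the single fact that the transfer function matrix of $\rev \mathcal{L}(\la)$ is $\rev_{\ell}\wh{G}(\la)$ with $\ell=\deg(\mathcal{L}(\la))$, and then invokes Definition \ref{def_pointstronglin} at the point $0$. Your additional detail on matching conditions (a) and (b) and on the Schur-complement verification is consistent with what the paper leaves implicit.
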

\begin{proof}
 The proposition follows from the fact that $\rev_{\ell} \wh{G}(\la)$ with $\ell=\deg (\mathcal{L}(\la))$ is the transfer function matrix of $\rev \mathcal{L}(\la).$ Then we make use of Definition \ref{def_pointstronglin}.
\end{proof}
Conditions $\rm (a)$ and $\rm (b)$ in Definition \ref{def:infinity} can be stated in a different way as we show in Remarks \ref{cond_a} and \ref{cond_b}, respectively.
\begin{rem}\label{cond_a} \rm As a particular case of what is discussed in Remark \ref{rem:minimalinf}, condition $(a)$ in Definition \ref{def:infinity} is equivalent to $$\rank\begin{bmatrix} A_1 \\ C_1 \end{bmatrix}=\rank\begin{bmatrix} A_1 & B_1 \end{bmatrix}=n,$$ if $\mathcal{L}(\la)$ is nonconstant, i.e., if $\ell=1.$ If $\mathcal{L}(\la)$ is constant, i.e., $\ell=0.$ condition $(a)$ is automatically satisfied since $\mathcal{L}(\la)$ is a polynomial system matrix and, therefore, $A_{0}$ is invertible.
\end{rem}

\begin{rem}\label{cond_b} \rm
By \cite[Lemma 6.9]{AmMaZa15}, a rational matrix $Q(\la)$ is regular at $0$ if and only if $Q(1/\la)$ is biproper. Therefore, condition $(b)$ in Definition \ref{def:infinity} is equivalent to the matrices $\diag((1/\la)^{g}G(\la),I_{s_1})$ and $\diag((1/\la)^{\ell}\wh{G}(\la),I_{s_2})$ being equivalent at infinity according to Definition \ref{def:equivalent}. More precisely, a linear polynomial system matrix $\mathcal{L}(\la)$ as in \eqref{pencil_inf} is a linearization of a rational matrix $G(\la)$ at $\infty$ of grade $g$ if and only if
\begin{itemize}
	\item[(a)]$ \mathcal{L}(\la)$ is minimal at $\infty,$ and
	\item[(b)]there exist nonnegative integers $s_1,s_2,$ with $s_1-s_2=q-p=r-m,$ and biproper matrices
	$B_1(\la)\in\F(\la)^{(p+s_1)\times (p+s_1)}$ and $B_2(\la)\in\F(\la)^{(m+s_1)\times (m+s_1)}$ such that
	\begin{equation}\label{eqinfty2}
	B_1(\la)\diag((1/\la)^{g}G(\la),I_{s_1})B_2(\la)=\diag((1/\la)^{\ell}\wh{G}(\la),I_{s_2}).
	\end{equation}
\end{itemize}
\end{rem}

We state in Theorem \ref{theo:spectralinf} a characterization of linearizations at infinity analogous to the one in Theorem \ref{theo:spectral} for linearizations at finite points. In this characterization, we consider the most usual situation $s_1:=s\geq 0$ and $s_2=0,$ assuming $q\geq p$ and $r\geq m.$ The proof of Theorem \ref{theo:spectralinf} is omitted since it follows immediately from Theorem \ref{theo:spectral} and Proposition \ref{prop_lininf}.

\begin{theo}[Spectral characterization of linearizations at infinity]\label{theo:spectralinf}
	Let $G(\la)\in\FF(\la)^{p\times m}$ and let
	\[
	\mathcal{L}(\la)=\begin{bmatrix}
	A_1 \la +A_0 &B_1 \la +B_0\\-(C_1 \la +C_0)&D_1 \la +D_0
	\end{bmatrix}\in\efe[\la]^{(n+(p+s))\times (n+(m+s))}
	\]
	be a linear polynomial system matrix such that $\rev\mathcal{L}(\la)$ is minimal at $0$ and let $\ell=\deg (\mathcal{L}(\la)).$ Then $\mathcal{L}(\la)$ is a linearization of $G(\la)$ at $\infty$ of grade $g$ if and only if the following three conditions hold:
	\begin{enumerate}
		\item[\rm (a)] $\mbox{\rm rank} \, \mathcal{L}(\la)=\mbox{\rm rank} \, G(\la)+n+s  $,
		
		\item[\rm (b)]  the pole elementary divisors of $\rev_g G(\la)$ at $0$ are the elementary divisors of $\rev_{\ell}(A_{1}\la+A_{0})$ at $0,$ and
			
		\item[\rm (c)]  the zero elementary divisors of $\rev_g G(\la)$ at $0$ are the elementary divisors of $\rev \mathcal{L}(\la)$ at $0.$
		
	\end{enumerate}
\end{theo}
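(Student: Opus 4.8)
The plan is to deduce Theorem~\ref{theo:spectralinf} from Theorem~\ref{theo:spectral} and Proposition~\ref{prop_lininf} by transferring the entire statement from $\infty$ to the finite point $0$ via reversals. First I would invoke Proposition~\ref{prop_lininf}: $\mathcal{L}(\la)$ is a linearization of $G(\la)$ at $\infty$ of grade $g$ if and only if $\rev \mathcal{L}(\la)$ is a linearization of $\rev_g G(\la)$ at $0$. So it suffices to characterize this latter property by conditions that translate back into (a), (b), (c).

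Next I would observe that $\rev\mathcal{L}(\la)=\rev_\ell\mathcal{L}(\la)$, with $\ell=\deg(\mathcal{L}(\la))$, is again a linear polynomial system matrix and that, as established in the subsection on polynomial system matrices minimal at infinity, its state matrix is $\rev_\ell(A_1\la+A_0)$, which is nonsingular because $A_1\la+A_0$ is. By hypothesis $\rev\mathcal{L}(\la)$ is minimal at $0$, so Theorem~\ref{theo:spectral}, applied to the rational matrix $\rev_g G(\la)$, the pencil $\rev\mathcal{L}(\la)$ and the point $\la_0=0$, says that $\rev\mathcal{L}(\la)$ is a linearization of $\rev_g G(\la)$ at $0$ if and only if: (a$'$) $\rank \rev\mathcal{L}(\la)=\rank \rev_g G(\la)+n+s$; (b$'$) the pole elementary divisors of $\rev_g G(\la)$ at $0$ are the elementary divisors of $\rev_\ell(A_1\la+A_0)$ at $0$; and (c$'$) the zero elementary divisors of $\rev_g G(\la)$ at $0$ are the elementary divisors of $\rev\mathcal{L}(\la)$ at $0$. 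Conditions (b$'$) and (c$'$) are verbatim conditions (b) and (c) of Theorem~\ref{theo:spectralinf}, so nothing further is needed there.

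The only point requiring a separate argument is the equivalence of (a$'$) with condition (a), namely $\rank \mathcal{L}(\la)=\rank G(\la)+n+s$. For this I would use that the normal rank is invariant under reversals: for any rational matrix $M(\la)$ and any integer $k$ one has $\rank \rev_k M(\la)=\rank M(\la)$, since $\rev_k M(\la)=\la^k M(1/\la)$ is obtained from $M(\la)$ by the substitution $\la\mapsto 1/\la$ (which sends a nonidentically zero $j\times j$ minor to a nonidentically zero $j\times j$ minor) followed by multiplication by the nonzero scalar rational function $\la^k$. Applying this to $\mathcal{L}(\la)$ and to $G(\la)$ turns (a$'$) into (a). Chaining Proposition~\ref{prop_lininf} with the above equivalences gives the theorem.

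I do not expect a genuine obstacle here: the content of the proof is exactly the rank-invariance observation, everything else being the reversal dictionary already set up in Section~\ref{sec.polysysmat}. The only mild bookkeeping point is keeping grades consistent, namely that the reversal of the pencil must be taken with grade $\ell=\deg(\mathcal{L}(\la))$, and that the degenerate case $\ell=0$ (so $\mathcal{L}(\la)$ constant, $\rev\mathcal{L}(\la)=\mathcal{L}(\la)$, state matrix $A_0$ invertible, minimality at $0$ automatic) is covered without changes.
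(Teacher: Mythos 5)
Your proposal is correct and is exactly the route the paper takes: the authors omit the proof precisely because, as they state, it "follows immediately from Theorem \ref{theo:spectral} and Proposition \ref{prop_lininf}." The only detail you add beyond that one-line citation is the (correct) observation that normal rank is invariant under $\rev_k$, which converts the rank condition at $0$ back into condition (a).
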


Next, we study in Proposition \ref{prop:invariantorders} how to recover the invariant orders at infinity of rational matrices from linearizations at infinity of grade $g.$
\begin{prop} \label{prop:invariantorders} Let $G(\la)\in\FF(\la)^{p\times m}$ with $\rank G(\la)=r,$ and let
	\begin{equation*}
	\mathcal{L}(\la)=\left[\begin{array}{cc}
	A_1 \la +A_0 &B_1 \la +B_0\\-(C_1 \la +C_0)&D_1 \la +D_0
	\end{array}\right]\in\F[\la]^{(n+(p+s))\times (n+(m+s))}
	\end{equation*} be a linearization at infinity of grade $g$ of $G(\la)$ with $\ell=\deg (\mathcal{L}(\la)).$
Let $e_{1}\leq\cdots\leq e_{t}$ be the partial multiplicities of $\rev_\ell (A_1\la+A_0)$ at $0$, and let $\widetilde{e}_{1}\leq\cdots\leq\widetilde{e}_{u}$ be the partial multiplicities of $\rev \mathcal{L}(\la)$ at $0$.  Then the invariant orders at infinity $q_{1}\leq q_{2}\leq\cdots\leq q_{r}$ of $G(\la)$ are
\begin{equation*}
(q_{1}, q_{2},\ldots , q_{r})=(-e_{t},-e_{t-1},\ldots ,-e_1,\underbrace{0,\ldots,0}_{r-t-u},\widetilde{e}_{1}, \widetilde{e}_{2},\ldots, \widetilde{e}_{u}) - (g,g,\ldots ,g).
\end{equation*}
		
\end{prop}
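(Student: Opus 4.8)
The plan is to transport the whole statement to the finite point $0$ via reversals, where the work has already been done, and then shift the resulting invariant orders back to $\infty$ with Lemma~\ref{rem:invariantorders}. Since, by hypothesis, $\mathcal{L}(\la)$ is a linearization of $G(\la)$ at $\infty$ of grade $g$, and $\rev\mathcal{L}(\la)$ is in particular minimal at $0$ (condition~$(a)$ of Definition~\ref{def:infinity}), I would first apply Theorem~\ref{theo:spectralinf}. Its conditions~$(b)$ and $(c)$ say that the elementary divisors of $\rev_\ell(A_1\la+A_0)$ at $0$ are exactly the pole elementary divisors of $\rev_g G(\la)$ at $0$, and the elementary divisors of $\rev\mathcal{L}(\la)$ at $0$ are exactly its zero elementary divisors at $0$. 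Equivalently, $e_1\le\cdots\le e_t$ are precisely the pole partial multiplicities of $\rev_g G(\la)$ at $0$, and $\widetilde{e}_1\le\cdots\le\widetilde{e}_u$ are precisely its zero partial multiplicities at $0$ (so, in particular, $t+u\le r$). One could just as well reach this point through Proposition~\ref{prop_lininf}, which makes $\rev\mathcal{L}(\la)$ a linearization of $\rev_g G(\la)$ at $0$, followed by Theorem~\ref{theo:spectral} applied at $\la_0=0$.

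Next I would note that $\rank \rev_g G(\la)=\rank G(\la)=r$, because $\rev_g G(\la)=\la^g G(1/\la)$ is obtained from $G(\la)$ by multiplying by the unit $\la^g$ of $\F(\la)$ and by the substitution $\la\mapsto 1/\la$, neither of which changes the normal rank. Combining this with the previous paragraph and the sign/ordering convention for invariant orders recalled in Section~\ref{sec.preliminaries}, the invariant orders of $\rev_g G(\la)$ at $0$, listed in nondecreasing order, are
\[
-e_t\le\cdots\le-e_1<\underbrace{0=\cdots=0}_{r-t-u}<\widetilde{e}_1\le\cdots\le\widetilde{e}_u,
\]
with exactly $r-t-u$ vanishing entries since there are $r$ invariant orders in total, $t$ of which are negative and $u$ positive. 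Finally I would invoke Lemma~\ref{rem:invariantorders} with the integer $g$: it gives that the $i$-th invariant order of $\rev_g G(\la)$ at $0$ equals $q_i+g$, where $q_1\le\cdots\le q_r$ are the invariant orders at infinity of $G(\la)$. Subtracting $g$ from every entry of the tuple displayed above therefore produces exactly $(q_1,q_2,\ldots,q_r)$, which is the claimed identity.

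This argument mirrors, step by step, the proof of Theorem~\ref{th:Rosenlocalinf2}, the single difference being that Theorem~\ref{th:Rosenlocalinf} is here replaced by its ``linearization'' analogue Theorem~\ref{theo:spectralinf}. Accordingly, I do not anticipate any genuine obstacle; the only points requiring a little care are the bookkeeping --- that the negated pole multiplicities come first in nondecreasing order, then the $r-t-u$ zeros, then the zero multiplicities, so that the tuple really is the ordered list $q_1\le\cdots\le q_r$ --- and the degenerate cases $t=0$, $u=0$, $s=0$ or $\ell=0$, all of which are immediate from the conventions already fixed (recall, for instance, Remark~\ref{cond_a} for $\ell=0$).
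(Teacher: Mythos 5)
Your proof is correct and follows exactly the route the paper takes: the paper's own proof simply says it is analogous to that of Theorem~\ref{th:Rosenlocalinf2} and follows from combining Theorem~\ref{theo:spectralinf} with Lemma~\ref{rem:invariantorders}, which is precisely your argument spelled out in detail.
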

\begin{proof} This proof is analogous to the one for Theorem \ref{th:Rosenlocalinf2}. It follows just from combining Theorem \ref{theo:spectralinf} and Lemma \ref{rem:invariantorders}.
\end{proof}

The following result is the counterpart of Proposition \ref{sametransfer} but for linearizations at infinity. It shows when a linear polynomial system matrix is a linearization at infinity of its transfer function matrix. The proof is immediate and, therefore, omitted.

\begin{prop}\label{sametransferinf}	Let
	\begin{equation}
	\mathcal{L}(\la)=\left[\begin{array}{cc}
	A_1 \la +A_0 &B_1 \la +B_0\\-(C_1 \la +C_0)&D_1 \la +D_0
	\end{array}\right]\in\F[\la]^{(n+q)\times (n+r)}
	\end{equation} be a linear polynomial system matrix and let $\wh{G}(\la)$ be its transfer function matrix. Then the following statements hold:
	\begin{itemize}
		\item[\rm (a)] If $\rank\begin{bmatrix} A_1 \\ C_1 \end{bmatrix}=\rank\begin{bmatrix} A_1 & B_1 \end{bmatrix}=n$ then $\mathcal{L}(\la)$ is a linearization of $\wh{G}(\la)$ at $\infty$ of grade $1.$
		\item[\rm (b)] If $\mathcal{L}(\la)$ is constant then $\mathcal{L}(\la)$ is a linearization of $\wh{G}(\la)$ at $\infty$ of grade $0.$
	\end{itemize}
\end{prop}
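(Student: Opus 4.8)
The plan is to deduce both items from the ``at $0$'' counterpart Proposition \ref{sametransfer} via the characterization in Proposition \ref{prop_lininf}, which says that $\mathcal{L}(\la)$ is a linearization of $\wh{G}(\la)$ at $\infty$ of grade $g$ exactly when $\rev\mathcal{L}(\la)$ is a linearization of $\rev_{g}\wh{G}(\la)$ at $0$. Two facts recalled earlier will be used repeatedly. First, $\rev\mathcal{L}(\la)$ is again a linear polynomial system matrix, because its $(1,1)$-block $\rev_{\ell}(A_1\la+A_0)$ is nonsingular whenever $A_1\la+A_0$ is (see the paragraph preceding Definition \ref{def:mininfinity}). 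Second, the transfer function matrix of $\rev\mathcal{L}(\la)$ equals $\rev_{\ell}\wh{G}(\la)$, where $\ell=\deg\mathcal{L}(\la)$ (see the proof of Theorem \ref{th:Rosenlocalinf} and of Proposition \ref{prop_lininf}). Since here the ``outer'' rational matrix is by definition the transfer function matrix of $\mathcal{L}(\la)$, the equivalence needed in Definition \ref{def:infinity} will always be the trivial one.

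For part (a), I would first observe that the hypothesis $\rank\begin{bmatrix} A_1 & B_1\end{bmatrix}=n$ forces the leading block-coefficient of $\mathcal{L}(\la)$ to be nonzero when $n>0$, so that $\ell=\deg\mathcal{L}(\la)=1$; the case $n=0$, in which $\mathcal{L}(\la)=\wh{G}(\la)=D_1\la+D_0$, is immediate, and its constant subcase is subsumed by part (b). Thus we may assume $\ell=1$, so the transfer function matrix of $\rev\mathcal{L}(\la)$ is $\rev_{1}\wh{G}(\la)$, and by Remark \ref{cond_a} the stated rank hypothesis is precisely the assertion that $\rev\mathcal{L}(\la)$ is minimal at $0$. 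Applying Proposition \ref{sametransfer} with $\Sigma=\{0\}$ to the pencil $\rev\mathcal{L}(\la)$ then gives that $\rev\mathcal{L}(\la)$ is a linearization of $\rev_{1}\wh{G}(\la)$ at $0$, and Proposition \ref{prop_lininf} with $g=1$ translates this into: $\mathcal{L}(\la)$ is a linearization of $\wh{G}(\la)$ at $\infty$ of grade $1$.

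For part (b), if $\mathcal{L}(\la)$ is constant then $\ell=0$, hence $\rev\mathcal{L}(\la)=\mathcal{L}(1/\la)=\mathcal{L}(\la)$, whose transfer function matrix is $\rev_{0}\wh{G}(\la)=\wh{G}(\la)$ (a constant matrix, since the state matrix $A_0$ is then invertible). A constant polynomial system matrix is minimal at every point: if $n>0$ the stacked matrices $\begin{bmatrix} A_0 \\ C_0\end{bmatrix}$ and $\begin{bmatrix} A_0 & B_0\end{bmatrix}$ have full rank $n$ because they contain the nonsingular $A_0$, and if $n=0$ we invoke the convention of Remark \ref{rem:convention}. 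Hence Proposition \ref{sametransfer} with $\Sigma=\{0\}$ shows that $\mathcal{L}(\la)=\rev\mathcal{L}(\la)$ is a linearization of $\wh{G}(\la)=\rev_{0}\wh{G}(\la)$ at $0$, and Proposition \ref{prop_lininf} with $g=0$ finishes the proof.

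There is no genuine obstacle here—as the authors remark, the proof is immediate—and the only point requiring care is the bookkeeping of the three integers: the grade $g$ being asserted ($1$ in item (a), $0$ in item (b)), the degree $\ell=\deg\mathcal{L}(\la)$, and the reversal exponent applied to $\wh{G}(\la)$. In both items these are arranged so that identity \eqref{eqinfty} in Definition \ref{def:infinity} holds with $Q_1=Q_2=I$ and $s_1=s_2=0$, which is why one could equally well argue directly from Definition \ref{def:infinity}: condition (b) is automatic because the outer matrix coincides with the transfer function matrix, and condition (a) is exactly Remark \ref{cond_a} combined with the stated rank hypothesis (respectively, the invertibility of $A_0$ in the constant case). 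The mildly delicate spot, which one simply checks by hand, is the degenerate situation $n=0$ with $\mathcal{L}(\la)$ constant, where the correct grade is $0$ and the statement is the one in part (b).
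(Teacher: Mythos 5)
Your argument is correct and is exactly the "immediate" verification the paper has in mind (it omits the proof): with $G(\la)=\wh{G}(\la)$ the equivalence \eqref{eqinfty} holds trivially with $Q_1=Q_2=I$ and $s_1=s_2=0$, so everything reduces to minimality of $\rev\mathcal{L}(\la)$ at $0$, which is the stated rank condition (Remark \ref{cond_a}) in case (a) and the invertibility of $A_0$ (or Remark \ref{rem:convention}) in case (b), whether one phrases it via Propositions \ref{prop_lininf} and \ref{sametransfer} as you do or directly from Definition \ref{def:infinity}. Your aside that the degenerate instance $n=0$ with $\mathcal{L}(\la)$ constant belongs to part (b) with grade $0$ is a sensible reading of the statement and does not affect the proof.
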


\begin{example}\rm Consider the matrices in Example \ref{ex_saad}. By Proposition \ref{sametransferinf}, the linear polynomial system matrix $P(\la)$ is a linearization of $G(\la)$ at $\infty$ of grade $1.$
\end{example}

\begin{example}\label{ex_subaiinfinity}\rm Consider the matrices in Example \ref{ex_subai}. Let us view $L(\la)$ as a polynomial system matrix with state matrix $A(\la)$ in \eqref{ex_statematrix}. With such partition, $G(\la)$ is the transfer function matrix of $L(\la).$ Then, by Proposition \ref{sametransferinf}, $L(\la)$ is a linearization of $G(\la)$ at $\infty$ of grade $1$ if $D_q$ has full column rank. However, the condition $\rank D_q =m$ is very restrictive, since it implies also $\rank D(\la)=m.$ Moreover, the structure of $G(\la)$ at $\infty$ is, in such a case, trivial because it is very easy to see that the $m$ invariant orders at $\infty$ of $G(\la)$ are all equal to $-q.$ This is consistent with Proposition \ref{prop:invariantorders}, because if $\rank D_q = m$ then $\rev L(0)$ has full column rank and, thus, $\rev L(\la)$ does not have partial multiplicities at zero. Moreover, as $A(\la)$ is the pencil in \eqref{ex_statematrix}, then it is easy to see that $\rev A(\la)$ has $m $ partial multiplicities at zero all equal to $q-1.$\\
Observe that, if we consider $A(\la)$ in \eqref{ex_statematrix} as state matrix of $L(\la),$ $\rev L(\la)$ is minimal at $0$ if and only if $\rank D_q =m.$ Thus, this hypothesis can not be avoided under such choice of state matrix. However, it is important to emphasize that if $L(\la)$ is viewed as a polynomial system matrix with empty state matrix then $L(\la)$ is a linearization of $G(\la)$ at $\infty $ of grade $q,$ without imposing any hypothesis. We postpone the proof of this result to Example \ref{ex:subairevisited}.
\end{example}

A linear polynomial system matrix that satisfies Definition \ref{def:linsubset} in $\F$ and Definition \ref{def:infinity}, for a certain grade $g,$ allows us to recover the complete information about the poles and zeros of the corresponding rational matrix, finite and at infinity. This is due to Theorem \ref{theo:spectral} and Proposition \ref{prop:invariantorders}. This important case leads us to introduce the following definition.

\begin{deff}[$g$-strong linearization]\label{def:strongrade} Let $G(\la) \in\F(\la)^{p\times m}$ and let $g$ be an integer. A linear polynomial system matrix $\mathcal{L}(\la)$ is said to be a strong linearization of grade $g$, or a $g$-strong linearization, of $G(\la)$ if $\mathcal{L}(\la)$ is a linearization of $G(\la)$ in $\F$ and also at $\infty$ of grade $g.$
\end{deff}

\begin{example} \rm Consider again the matrices in Example \ref{ex_saad}. Then the linear polynomial system matrix $P(\la)$ is a $1$-strong linearization of $G(\la)$ if and only if all the matrices $B_1, \ldots, B_s$ are nonsingular.
\end{example}

\subsection{Comparison with another definition of strong linearization}\label{comparison}

Recently, another definition of ``strong linearization'' of a rational matrix $G(\la)$ has been presented in \cite[Definition 3.4]{strong}. In contrast to Definition \ref{def:strongrade}, that definition does not make any reference to a ``grade $g$'', but the linear polynomial system matrices satisfying \cite[Definition 3.4]{strong} also allow us to recover the information about poles and zeros of $G(\la),$ including those at infinity. Therefore, it is convenient to establish a relation between \cite[Definition 3.4]{strong} and Definition \ref{def:strongrade}. This is the purpose of Proposition \ref{comparison}. Before stating and proving that proposition, we introduce some comments. Let us consider a linear polynomial system matrix \begin{equation*}
\mathcal{L}(\la)=\left[\begin{array}{cc}
A_1 \la +A_0 &B_1 \la +B_0\\-(C_1 \la +C_0)&D_1 \la +D_0
\end{array}\right]\in\F[\la]^{(n+q)\times (n+r)},
\end{equation*} with transfer function matrix $\widehat{G}(\la),$ and let $G(\la)\in \F(\la)^{p\times m}$ be a rational matrix written as in \eqref{eq.polspdec}. We recall that, according to \cite[Remark 3.5]{strong}, $\mathcal{L}(\la)$ is a strong linearization of $G(\la)$ in the sense of \cite[Definition 3.4]{strong} if the following statements hold:
\begin{itemize}
	\item[(a)] $\mathcal{L}(\la)$ is a linearization of $G(\la)$ in $\efe$ and,
	\item[(b)] $A_{1}$ is invertible if $n>0,$ and there exist integers $s_1,$ $s_2\geq 0$ and rational matrices $Q_1(\la)\in\F(\la)^{(p+s_1)\times (p+s_1)}$ and $Q_2(\la)\in\F(\la)^{(m+s_1)\times (m+s_1)}$ regular at $0$ such that \begin{equation}\label{reversalsdegree}
	Q_1(\la)\diag(\rev G(\la),I_{s_1})Q_2(\la)=\diag(\rev \wh{G}(\la),I_{s_2}).\end{equation}
\end{itemize}
As we stated in Remark \ref{equiv_finitepoins}, condition $(a)$ is equivalent to $\mathcal{L}(\la)$ being a linearization of $G(\la)$ in the sense of \cite[Definition 3.2]{strong}. For condition $(b)$ notice that, if $n>0$ and $\deg (\mathcal{L} (\la)) = 1$, in Definition \ref{def:strongrade} we do not require $A_{1}$ to be invertible but $\rank\begin{bmatrix} A_1 \\ C_1 \end{bmatrix}=\rank\begin{bmatrix} A_1 & B_1 \end{bmatrix}=n,$ according to Remark \ref{cond_a}. Observe, in addition, that these rank conditions are satisfied if $A_{1}$ is invertible.  Moreover, in contrast to \eqref{reversalsdegree}, in \eqref{eqinfty} we consider $\rev_{\ell} \wh{G}(\la)$ instead of $\rev \wh{G}(\la),$ where $\ell=\deg (\mathcal{L}(\la))$, and $\rev_g G(\la)$ instead of $\rev G(\la),$ for an integer $g.$ In this way, Definition \ref{def:infinity} looks for $\rev\mathcal{L}(\la)$ to be a linearization at $0$ of the $g$-reversal of $G(\la)$, because the transfer function matrix of $\rev\mathcal{L}(\la)$ is $\rev_{\ell} \wh{G}(\la).$ Note that, $\rev \wh{G}(\la)$ is the transfer function matrix of $\rev\mathcal{L}(\la)$ if and only if $\wh{G} (\la)$ is not strictly proper and the degree of the polynomial part of $\wh{G}(\la)$ is equal to the degree of $\mathcal{L}(\la)$. Thus, condition \eqref{reversalsdegree} is different from \eqref{eqinfty} in some cases. Nevertheless, as we will see in Proposition \ref{comparison}, in most cases strong linearizations of $G(\la)$ in the sense of \cite[Definition 3.4]{strong} are $g$-strong linearizations of $G(\la)$ of a certain grade $g.$

\begin{prop} \label{comparison}Let $G(\la) \in\F(\la)^{p\times m},$ and let \begin{equation*}
	\mathcal{L}(\la)=\left[\begin{array}{cc}
	A_1 \la +A_0 &B_1 \la +B_0\\-(C_1 \la +C_0)&D_1 \la +D_0
	\end{array}\right]\in\F[\la]^{(n+q)\times (n+r)}
	\end{equation*} be a strong linearization of $G(\la)$ according to \cite[Definition 3.4]{strong}. Let $G(\la)$ be expressed uniquely as in \eqref{eq.polspdec}, and let $g_G:=\deg(Q(\la))$ if $G(\la)$ is not strictly proper and $g_G:=0$ otherwise. Then the following statements hold:
	\begin{itemize}
		\item[\rm(a)] If $n=0$ or $D_1 + C_1 A_1^{-1}B_1\neq 0,$ then $\mathcal{L}(\la)$ is a $g_G$-strong linearization of $G(\la)$.
		\item[\rm(b)] If $D_1 + C_1 A_1^{-1}B_1= 0,$ $q=p,$ and $r=m,$ then $\mathcal{L}(\la)$ is a $(g_G+1)$-strong linearization of $G(\la)$.
		\item[\rm(c)] If $D_1 + C_1 A_1^{-1}B_1= 0,$ and $q\neq p$ or $r\neq m,$ then $\mathcal{L}(\la)$ is not a $g$-strong linearization of $G(\la)$ for any integer $g.$
	\end{itemize}
	
\end{prop}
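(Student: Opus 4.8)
The plan is to reduce everything to what we already know: $\mathcal{L}(\la)$ being a strong linearization of $G(\la)$ according to \cite[Definition 3.4]{strong} means (conditions (a)--(b) recalled just before the statement) that $\mathcal{L}(\la)$ is a linearization of $G(\la)$ in $\efe$ and that condition \eqref{reversalsdegree} holds, with $A_1$ invertible when $n>0$. Since $A_1$ invertible implies the rank conditions $\rank\begin{bmatrix} A_1 \\ C_1 \end{bmatrix}=\rank\begin{bmatrix} A_1 & B_1 \end{bmatrix}=n$ of Remark \ref{cond_a}, condition (a) of Definition \ref{def:infinity} (i.e. $\rev\mathcal{L}(\la)$ minimal at $0$) holds automatically in all three cases, whether $n=0$ or $n>0$. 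So for parts (a) and (b) the only thing left to check is condition (b) of Definition \ref{def:infinity}, namely \eqref{eqinfty}, with the correct grade $g$; and for part (c) we must show \eqref{eqinfty} fails for every $g$.

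The key observation is to compute $\ell=\deg(\mathcal{L}(\la))$ and compare $\rev_\ell\wh{G}(\la)$ with $\rev\wh{G}(\la)$. First I would determine the degree of the polynomial part of $\wh{G}(\la)$: writing $\wh{G}(\la)=(D_1\la+D_0)+(C_1\la+C_0)(A_1\la+A_0)^{-1}(B_1\la+B_0)$, the strictly proper part $(C_1\la+C_0)(A_1\la+A_0)^{-1}(B_1\la+B_0)$ contributes a polynomial part of degree at most $1$, with leading coefficient exactly $C_1A_1^{-1}B_1$ (when $n>0$; zero when $n=0$) by dividing. Hence the degree-$1$ coefficient of the polynomial part of $\wh{G}(\la)$ is $D_1+C_1A_1^{-1}B_1$ (interpreting $C_1A_1^{-1}B_1=0$ if $n=0$). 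So: in case (a), $\deg(Q_{\wh G})=1$ where $Q_{\wh G}$ is the polynomial part of $\wh G$, hence $\rev_\ell\wh G(\la)=\rev\wh G(\la)$ with $\ell=1$, and \eqref{eqinfty} for $g=g_G$ is literally \eqref{reversalsdegree} once one checks $\rev_{g_G}G(\la)=\rev G(\la)$ by the very definition of $\rev$ (Definition \ref{reversal}) — note $g_G=\deg(Q(\la))$ is exactly the grade used in the definition of $\rev G$. Wait: one must be slightly careful — if $n=0$ then $\mathcal L(\la)=D_1\la+D_0=\wh G(\la)$ and $\deg\mathcal L=1$ provided $D_1\ne0$; if $D_1=0$ then $\mathcal{L}$ is constant, $\ell=0$, and this is the degenerate sub-case, but then $\wh G=D_0$ is constant so $g_G=0$ and \eqref{eqinfty} with $g=0$ again reduces to \eqref{reversalsdegree}. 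I would fold this into case (a) cleanly or note it explicitly.

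For part (b): here the degree-$1$ polynomial coefficient of $\wh G$ vanishes, so the polynomial part of $\wh G$ is constant, i.e. $\deg(Q_{\wh G})\le 0$; one then has to verify $\ell=\deg(\mathcal{L}(\la))=1$ anyway (because $A_1$ is invertible — assuming $n>0$; if $n=0$ and $D_1+C_1A_1^{-1}B_1=0$ means $D_1=0$, then $\ell=0$, but then $q=p$, $r=m$ force $\wh G=G$ and this collapses to part (a)'s degenerate case again, so strictly I'd restrict (b) to $n>0$, matching the hypothesis's spirit). With $\ell=1$ but $\deg Q_{\wh G}=0$, we get $\rev_\ell\wh G(\la)=\rev_1\wh G(\la)=\la\cdot\rev_0\wh G(\la)=\la\cdot\rev\wh G(\la)$. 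So \eqref{eqinfty} with grade $g=g_G+1$ becomes: find $Q_1,Q_2$ regular at $0$ with $Q_1\diag(\la\,\rev_{g_G}G(\la),I_{s_1})Q_2=\diag(\la\,\rev\wh G(\la),I_{s_2})$. Since $q=p$, $r=m$ we can take $s_1=s_2=0$ (using the remark after Definition \ref{def:linsubset} that one can always reduce to this, plus $q=p,r=m$ here), and then \eqref{reversalsdegree} gives $Q_1\rev G(\la)Q_2=\rev\wh G(\la)$; multiply through by $\la$ (which commutes and is a scalar, absorbed harmlessly) to land exactly in the required identity. The main obstacle here is bookkeeping with $s_1,s_2$: when $q=p,r=m$ we genuinely can take them zero, but I'd double-check that \eqref{reversalsdegree}'s $s_1,s_2$ can be chosen $0$ in this size-matched case using the local Smith--McMillan argument cited in the paper.

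For part (c), the strategy is a rank/size obstruction. When $D_1+C_1A_1^{-1}B_1=0$ but $q\ne p$ or $r\ne m$, suppose for contradiction $\mathcal L(\la)$ is a $g$-strong linearization for some $g$. The condition $s_1-s_2=q-p=r-m$ ties $s_1,s_2$ together; WLOG $q>p$ (so $s_1-s_2>0$, forcing $s_1>0$), and we reduce to $s_2=0$ as usual. We still have $\ell=\deg(\mathcal L)=1$ (since $A_1$ invertible, $n>0$ — the case $n=0$ with $D_1+C_1A_1^{-1}B_1=0$ forces $D_1=0$ and $\mathcal L$ constant, handled separately and also fails the required identity on a count) and $\deg Q_{\wh G}\le 0$, so $\rev_\ell\wh G(\la)=\la\,\rev\wh G(\la)$. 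Now look at the invariant orders at $0$: on the left of \eqref{eqinfty}, $\diag(\rev_g G(\la),I_{s_1})$ has invariant orders at $0$ equal to those of $\rev_gG$ together with $s_1$ extra zeros, whereas the right side $\diag(\la\,\rev\wh G(\la),I_{s_2})=\diag(\la\,\rev\wh G(\la),\text{nothing})$ has every invariant order at $0$ shifted up by $1$ on the $\rev\wh G$ block. For these multisets to coincide — as forced by equivalence at $0$, which preserves the local Smith--McMillan form — we'd need the $s_1>0$ spurious zero invariant orders on the left to match up with genuinely positive ($\ge 1$) invariant orders on the right, and a careful count of how many zeros each side has gives a contradiction: the left has at least $s_1$ zero invariant orders among its $\rank$-many entries, while after the $+1$ shift the right has at most $s_2=0$ coming from the identity block plus however many of $\rev\wh G$'s invariant orders were $-1$; combining with the rank bookkeeping $\rank\wh G=\rank G+s_1$ forces a mismatch. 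I would make this precise by writing out both local Smith--McMillan forms explicitly using Lemma \ref{rem:invariantorders} / the argument of Theorem \ref{th:Rosenlocalinf2} and counting zeros; this counting step is the main obstacle of the whole proof, since cases (a) and (b) are essentially substitutions into \eqref{reversalsdegree}. Finally I would remark that part (c) is exactly the situation where $\rev\wh G$ is \emph{not} the transfer function of $\rev\mathcal L$ and the size mismatch prevents any grade adjustment from repairing it.
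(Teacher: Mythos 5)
Your parts (a) and (b) follow essentially the paper's own route: identify the coefficient of $\la$ in the polynomial part of $\wh{G}(\la)$ as $D_1+C_1A_1^{-1}B_1$, note that minimality of $\rev\mathcal{L}(\la)$ at $0$ comes for free from the invertibility of $A_1$ (or $n=0$), and then either read \eqref{eqinfty} with $g=g_G$ directly off \eqref{reversalsdegree} (case (a)) or strip the identity paddings, multiply by $\la$, and reinstate them to get grade $g_G+1$ (case (b)); the reduction to $s_1=s_2=0$ that you flag is exactly the invariant-orders argument the paper uses and is unproblematic. One small slip: in the degenerate sub-case $n=0$, $D_1=0$ you assert $g_G=0$, which need not hold (e.g.\ $\mathcal{L}=[\,1\;\;0\,]$ is a strong linearization in the sense of \cite{strong} of $G(\la)=[\,1\;\;\la\,]$, where $g_G=1$); the correct and sufficient observation is simply that $\rev_{g_G}G(\la)=\rev G(\la)$ and $\rev_{\ell}\wh{G}(\la)=\rev\wh{G}(\la)$, so \eqref{eqinfty} with $g=g_G$ is literally \eqref{reversalsdegree}.

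The genuine gap is in part (c). First, ``WLOG $q>p$'' is not legitimate: the roles of $G$ and $\wh{G}$ are not symmetric (the free grade $g$ sits on $G$, the fixed shift $\rev_1$ sits on $\wh{G}$), so when $q<p$ the identity padding moves to the $\wh{G}$ side ($s_2>s_1$, and after your reduction $s_1=0$), the left-hand side is just $\rev_g G(\la)$ with no forced zero invariant orders, and your zero-count collapses. The paper handles the general case by a two-way split: for $g>g_G$ it compares the \emph{number} of zero invariant orders at $0$ ($s_1$ on one side, $s_2$ on the other, which differ), using that \eqref{reversalsdegree} forces all invariant orders of $\rev_{g_G}G(\la)$ at $0$ to be nonnegative; for $g\le g_G$ it compares the \emph{largest} invariant orders at $0$, which differ by at least one. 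Second, even in your $q>p$ case the count only closes if you use that $\wh{G}(\la)$ is proper — this is precisely where the hypothesis $D_1+C_1A_1^{-1}B_1=0$ enters — so that $\rev_0\wh{G}(\la)=\wh{G}(1/\la)$ has all invariant orders at $0$ nonnegative and hence $\rev_1\wh{G}(\la)$ has none equal to zero; your hedge about invariant orders equal to $-1$ shows the argument as written does not exclude extra zeros on the right. Finally, your parenthetical claim that the $n=0$, $\mathcal{L}$ constant, sizes-different case ``fails the required identity on a count'' is false (the example above is a $1$-strong linearization with $q\neq p$); that case is harmless only because it belongs to part (a), and parts (b)--(c) should be read, as in the paper, under the assumption $n>0$.
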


\begin{proof} We remark that this proof is somewhat technical and that can be skipped without affecting the understanding of the rest of the paper. We will use throughout the proof that $\rev G(\la) = \rev_{g_G} G(\la)$ without mentioning it explicitly. Let $\mathcal{L}(\la)$ be a strong linearization of $G(\la)$ in the sense of \cite[Definition 3.4]{strong} and let $\wh{G} (\la)$ be the transfer function matrix of $\mathcal{L}(\la)$. Then $\mathcal{L}(\la)$ is a linearization of $G(\la)$ in $\F.$ Moreover, if $n>0,$ $A_{1}$ is invertible, which implies $\rank\begin{bmatrix} A_1 \\ C_1 \end{bmatrix}=\rank\begin{bmatrix} A_1 & B_1 \end{bmatrix}=n.$ Then, it only remains to study the different cases that may occur in condition \eqref{reversalsdegree} in order $\mathcal{L}(\la)$ to satisfy \eqref{eqinfty}, that is, in order to be a $g$-strong linearization of $G(\la)$ for some integer $g.$
	
We consider first the trivial case $n=0.$ In this case, $G(\la)$ is a polynomial matrix and $\widehat{G}(\la)=\mathcal{L}(\la)=D_1\la + D_0.$ Therefore, $\rev \widehat{G}(\la)=\rev_{\ell} \widehat{G}(\la),$ where $\ell=\deg (\mathcal{L}(\la)).$ Thus, $\mathcal{L}(\la)$ satisfies \eqref{eqinfty} with $g = g_G$, and it is a $g_G$-strong linearization of $G(\la).$
		
	In the rest of the proof, we assume $n>0$, which implies $\ell = \deg (\mathcal{L} (\la)) = 1$. In this case, $\widehat{G}(\la)$ can be written as $\widehat{G}(\la)=\la(D_1 + C_1 A_1^{-1}B_1)+\widehat{G}_{pr}(\la),$ where $\widehat{G}_{pr}(\la)$ is a proper rational matrix. Therefore, $\rev \widehat{G}(\la)=\rev_{\widehat{g}} \widehat{G}(\la),$ where $\widehat{g}=1$ if $D_1 + C_1 A_1^{-1}B_1\neq 0$ and $\widehat{g}=0$ otherwise. Then, we have two different cases. If $D_1 + C_1 A_1^{-1}B_1\neq 0$ then $\widehat{g}=\ell =1,$ and, therefore, $\mathcal{L}(\la)$ is a $g_G$-strong linearization of $G(\la).$ If $D_1 + C_1 A_1^{-1}B_1= 0$ then $\widehat{g}=0,$ and there are two different sub-cases:
			\begin{itemize}
				\item $q=p$ and $r=m,$ that is, $G(\la)$ and $\widehat{G}(\la)$ have the same size. So, in \eqref{reversalsdegree}, we have $s_1=s_2=:s.$ Then the invariant orders at $0$ of $\diag(\rev_{g_{G}} G(\la),I_{s})$ are equal to those of $\diag(\rev_0 \wh{G}(\la),I_{s}),$ which implies that the invariant orders at $0$ of $\rev_{g_{G}} G(\la)$ are also equal to those of $\rev_0 \wh{G}(\la).$ Multiplication by $\la$ of $\rev_{g_{G}} G(\la)$ and $\rev_0 \wh{G}(\la)$ yields that $\rev_{g_{G}+1} G(\la)$ and $\rev_1 \wh{G}(\la)$ have the same invariant orders at $0.$ The same happens with $\diag(\rev_{g_{G} + 1} G(\la),I_{s})$ and $\diag(\rev_1 \wh{G}(\la),I_{s}).$ Thus, there exist $\widetilde{Q}_1(\la)$ and $\widetilde{Q}_2(\la)$ rational matrices regular at $0$ such that $	\widetilde{Q}_1(\la)\diag(\rev_{g_G + 1} G(\la),I_{s})\widetilde{Q}_2(\la)=\diag(\rev_1 \wh{G}(\la),\allowbreak I_{s}),$ which proves according to \eqref{eqinfty} that $\mathcal{L}(\la)$ is a $(g_G + 1)$-strong linearization of $G(\la).$
				\item $q\neq p$ or $r\neq m,$ that is, $G(\la)$ and $\widehat{G}(\la)$ have different sizes and $s_1\neq s_2.$ In this case, there does not exist any integer $g$ such that the invariant orders at $0$ of $\diag(\rev_{g} G(\la),I_{s_1})$ are equal to the invariant orders at $0$ of $\diag(\rev_1 \wh{G}(\la),\allowbreak I_{s_2}).$ As a consequence, $\mathcal{L}(\la)$ is not a $g$-strong linearization of $G(\la)$ for any grade $g,$ since \eqref{eqinfty} is never satisfied. In order to prove this, note that by \eqref{reversalsdegree}, $\rank G(\la)\neq \rank\wh G(\la),$ and the invariant orders at zero of $\diag(\rev_{g_{G}} G(\la),I_{s_1})$ are equal to those of $\diag(\rev_0 \wh{G}(\la),I_{s_2}).$ Moreover, $\wh{G}(\la)$ is proper since $D_1 + C_1 A_1^{-1}B_1= 0.$ Therefore, all the invariant orders at $0$ of $\rev_0 \wh{G}(\la)=\wh{G}(1/\la)$ are nonnegative. So, the same happens to $\rev_{g_G}G(\la).$ Then, $\diag(\rev_1 \wh{G}(\la),\allowbreak I_{s_2})$ has $s_2$ invariant orders at $0$ equal to zero, and its remaining invariant orders at $0$ are $\rank \wh{G}(\la)$ positive numbers. In contrast, if $g>g_G,$ then $\diag(\rev_{g} G(\la),I_{s_1})$ has $s_1$ invariant orders at $0$  equal to zero, and its remaining invariant orders at $0$ are $\rank G(\la)$ positive numbers. If $g\leq g_G,$ notice that the largest invariant order at $0$ of $\diag(\rev_{g} G(\la),I_{s_1})$ is less than the largest of $\diag(\rev_{1} \wh{G}(\la),I_{s_2}).$
			\end{itemize}\vspace{-0.6cm}\end{proof}
We emphasize that, as far as we know, no explicit examples of strong linearizations in the sense of \cite{strong} with $n>0,$ $q\neq p$ or $r\neq m,$ and $D_1 + C_1 A_1^{-1}B_1= 0$ have been constructed so far in the literature. Thus, in plain words, Proposition \ref{comparison} states that strong linearizations according to \cite{strong} are particular cases of $g$-strong linearizations according to Definition \ref{def:strongrade}, except for a very particular instance.

In the rest of this section, we first revise important examples of strong linearizations in \cite{strong} from the perspective of Definition \ref{def:strongrade}. Then, in Example \ref{examplegrade}, we provide a $g$-strong linearization that is not a strong linearization in the sense of \cite{strong}. This example illustrates that the local approach followed in this paper yields, apart from the flexibility of constructing local linearizations, a concept of ``global'' strong linearization, more general than that of \cite{strong}.

\begin{example}\label{ex_1} \rm
Let $G(\la)$ be a rational matrix written as in \eqref{eq.polspdec}, i.e., $G(\la)=Q(\la)+G_{sp}(\la),$ with $\deg(Q(\la))>1.$ Then, the strong block minimal bases linearizations constructed in \cite[Theorem 5.11]{strong} are $\deg(Q(\la))$-strong linearizations of $G(\la),$ according to Definition \ref{def:strongrade}. Note that, with the notation in Proposition \ref{comparison}, these linearizations satisfy $D_1 + C_1 A_1^{-1}B_1\neq 0,$ since $D_1\neq 0,$ and $C_1=B_1=0.$
\end{example}

\begin{example}\label{ex_2} \rm Let us now consider a rational matrix $G(\la)$ written as in \eqref{eq.polspdec} with $\deg(Q(\la))\leq 1$ or $Q(\la)=0,$ and let $G_{sp}(\la)=C(\la I_n -A)^{-1}B$ be a minimal state-space realization of $G_{sp}(\la).$ Then, the following strong linearization
\begin{equation}
L(\la)=\left[\begin{array}{cc}
\la I_n - A & B \\
-C & Q(\la)
\end{array}\right]
\end{equation}
is considered in \cite{strong} (paragraph below equation $(28)$). In this case, with the notation in Proposition \ref{comparison}, we have $n>0,$ $q=p,$ $r=m,$ and $C_1A_1^{-1}B_1 =0.$ Then $D_1 +C_1A_1^{-1}B_1 =0$ if $g_G =0,$ or $D_1 +C_1A_1^{-1}B_1 \neq 0$ if $g_G =1.$ In any case, $L(\la)$ is a $1$-strong linearization by Proposition \ref{comparison}. Observe that in this example $G(\la)$ is the transfer function of $L(\la).$ Thus, the fact that $L(\la)$ is a $1$-strong linearization can also be obtained directly from Propositions \ref{sametransfer} and \ref{sametransferinf}.
	
\end{example}
Finally, we discuss the announced example of a linear polynomial system matrix that is a strong linearization in the sense of Definition \ref{def:strongrade} but not in the sense of \cite[Definition 3.4]{strong}.
\begin{example}\label{examplegrade}\rm Let us consider the rational matrix $$G(\la)=\left[\begin{array}{cc}
	\dfrac{\la^2+\la-1}{\la} & -\dfrac{1}{\la}\\
	-1 & -\la^2+\la-2
	\end{array}\right].$$ It can be easily proved that \begin{equation*}
	\mathcal{L}(\la) = \left[
	\begin{array}{cc|cc}
	\la & 0 & 1 & 1 \\
	0 & 1 & 0 & \la \\
	\hline \phantom{\Big|}
	1 & 0 &\la+1 & 0 \\
	\la & \la & 0 & \la-1
	\end{array}
	\right]:=\left[\begin{array}{c|c}
	A_1 \la +A_0 &B_1 \la +B_0\\\hline \phantom{\Big|} -(C_1 \la +C_0)&D_1 \la +D_0
	\end{array}\right]
	\end{equation*}
	is a linear polynomial system matrix of $G(\la).$ Moreover, note that $\mathcal{L}(\la)$ is minimal for all $\la_0\in\F.$ Therefore, by Proposition \ref{sametransfer}, $\mathcal{L}(\la)$ is a linearization of $G(\la)$ in $\F.$ By Proposition \ref{sametransferinf}, $\mathcal{L}(\la)$ is also a linearization of $G(\la)$ at $\infty$ of grade $1$ since $\rank\begin{bmatrix} A_1 \\ C_1 \end{bmatrix}=\rank\begin{bmatrix} A_1 & B_1 \end{bmatrix}=2.$ Thus, $\mathcal{L}(\la)$ is a $1$-strong linearization of $G(\la),$ according to Definition \ref{def:strongrade}. However, $\mathcal{L}(\la)$ is not a strong linearization according to \cite[Definition 3.4]{strong} since $A_1$ is singular. Nevertheless, we can recover easily the invariant orders at $\infty$ from $\mathcal{L}(\la)$ by applying Proposition \ref{prop:invariantorders} with $g=1.$ For this purpose, note that $\rev\mathcal{L}(\la)$ does not have elementary divisors at $0,$ since $\rev \mathcal{L}(\la)$ is regular at $0.$ Moreover, the only elementary divisor at $0$ of $A_1+A_0 \la$ is $\la.$ Therefore, the invariant orders at infinity of $G(\la)$ are $-2$ and $-1$ by Proposition \ref{prop:invariantorders}. The invariant orders of $G(\la)$ at any finite point can be recovered from $\mathcal{L}(\la)$ by using Theorem \ref{theo:spectral}. It is worthwhile to emphasize that the grade of $\mathcal{L}(\la)$ as a strong linearization of $G(\la)$ is different from the degree of the polynomial part of $G(\la).$ Observe that this also happens in Example \ref{ex_2} when $Q(\la)$ is a constant matrix.
	
\end{example}

\section{Block full rank pencils} \label{sec-full-rank-pencils}

In this section, we introduce a wide family of pencils that give us the information about the zeros of rational matrices locally. More precisely, they are linearizations with empty state matrix of rational matrices in some subsets of $\F$, as well as at $\infty$ under some conditions. These pencils will be called block full rank pencils, since they generalize the block minimal bases pencils introduced in \cite[Definition 3.1]{BKL}. The definition of block full rank pencils is motivated by the fact that most of the linearizations for rational approximations of NLEPs that have been constructed so far are pencils of this type. The key results in this section are Theorems \ref{th:1blockfullrank} and \ref{th:2blockfullrank}, which will be applied in the following section to establish rigorously and very easily the properties of the linearizations used in \cite{nlep}. Note that, according to Theorem \ref{theo:spectral}, the results in this section are not useful for studying, or computing, the finite poles of rational matrices because the considered linearizations have empty state matrix. This may be a drawback in certain situations, but we emphasize again that it is not in the development of algorithms for solving large-scale NLEPs via rational approximations \cite{guttel-tisseur-2017,nlep, automatic,Saad}. This is due to the fact that, in those cases, the poles of the rational matrix are known, since they are chosen for constructing the approximation, and/or are located outside the target set.

The theory we develop for block full rank pencils is based on the results for block minimal bases pencils from \cite{BKL}. It is also possible to develop directly such theory at the cost of proving
some preliminary lemmas. However, we think that our approach has the advantages of establishing a connection between both families of pencils and of emphasizing the relevance of these families in the study of rational and polynomial matrices.

Next, we present a few definitions and results from \cite{BKL} for making easier the reading of this section. The notion of (strong) block minimal bases pencil is recalled in Definition \ref{def:minlinearizations}. It relies on the concept of  minimal bases of rational subspaces, which are certain polynomial bases of such subspaces defined in \cite{forney,Kailath}. As in \cite{BKL}, we will say for brevity that a polynomial matrix $K(\la)\in\F[\la]^{p\times m}$ (with $p<m$) is a minimal basis if its rows form a minimal basis of the rational subspace they span. One of the most useful characterizations of minimal bases (see \cite[Main Theorem]{forney} or \cite[Theorem 2.2]{BKL}) is that $K(\la)\in\F[\la]^{p\times m}$ is a minimal basis if and only if $K(\la_{0})$ has full row rank for all $\la_{0}\in\F$ and $K(\la)$ is row reduced, i.e., its highest row degree coefficient matrix has full row rank (see \cite[Definition 2.1]{BKL}). Moreover, a minimal basis $N(\la)\in\F[\la]^{q\times m}$ is said to be dual to $K(\la)$ if $p+q=m$ and $K(\la)N(\la)^{T}=0$ \cite[Definition 2.5]{BKL}.

\begin{deff} {\rm \cite[Definition 3.1]{BKL}} \emph{((Strong) block minimal bases pencil)}. \label{def:minlinearizations} A block minimal bases pencil is a linear polynomial matrix over $\mathbb{F}$ with the following structure
	\begin{equation}
	\label{eq:minbaspencil}
	L(\la) =
	\left[
	\begin{array}{cc}
	M(\la) & K_2 (\la)^T \\
	K_1 (\lambda) &0
	\end{array}
	\right]
	\end{equation}
	where $K_1(\la)$ and $K_2(\la) $ are both minimal bases. In addition, if $K_1(\la) $ (respectively $K_2(\la)$) is a minimal basis with all its row degrees equal to $1$ and with the row degrees of a minimal basis $N_1(\la)$ (respectively $N_2(\la) $) dual to $K_1(\la)$ (respectively $K_2(\la)$) all equal, then $L(\la)$ is called a strong block minimal bases pencil. Moreover, given a polynomial matrix $P(\la),$ it is said that $L(\la)$ is associated with $P(\la)$ if $N_{2}(\la) M(\la) N_{1}(\la)^{T}=P(\la).$
\end{deff}

Theorem 3.3 in \cite{BKL} uses the standard definitions of linearizations and strong linearizations of polynomial matrices (see, for instance, \cite{spectral}) to prove the most important property of a (strong) block minimal bases pencil $L(\la)$ as in \eqref{eq:minbaspencil}, namely, $L(\la)$ is a (strong) linearization of the polynomial matrix $P(\la) = N_{2}(\la) M(\la) N_{1}(\la)^{T}$ for any $N_1(\la)$ and $N_2(\la)$ minimal bases dual to $K_1(\la)$ and $K_2(\la)$, respectively. In the strong case, this result considers $P(\la)$ as a polynomial matrix with grade $g_P := 1+\deg ( N_{1}(\la)) + \deg ( N_{2}(\la) )$. We can state \cite[Theorem 3.3]{BKL} in the language of this paper through Definitions \ref{def:linsubset} and \ref{def:strongrade} as ``a block minimal bases pencil $L(\la)$ is a linearization of $P(\la)$ in $\mathbb{F}$ with empty state matrix and a strong block minimal bases pencil $L(\la)$ is a $g_P$-strong linearization of $P(\la)$ with empty state matrix''. In order to see this, recall that the ``empty state matrix'' condition implies that the minimality condition is automatically satisfied (see Remarks \ref{rem:convention} and \ref{extremecases}) and that $\wh{G} (\la) = L(\la)$ in the definitions cited above.

Next, we relax to a minimum the conditions on $K_1(\la)$ and $K_2(\la)$ in \eqref{eq:minbaspencil} for defining a wider family of pencils that includes block  minimal bases pencils as a particular case.

\begin{deff} \label{def:blockfullrank} \emph{(Block full rank pencil)} A block full rank pencil is a linear polynomial matrix over $\mathbb{F}$ with the following structure
	\begin{equation}\label{almost}
	L(\la) =
	\left[
	\begin{array}{cc}
	M(\la) & K_2 (\la)^T \\
	K_1 (\lambda) &0
	\end{array}
	\right]
	\end{equation}
	where $K_1(\la)$ and $K_2(\la)$ are pencils with full row normal rank.
\end{deff}
Note that Definition \ref{def:blockfullrank} includes the cases when $K_1 (\la)$ or $K_2 (\la)$ are empty matrices, that is, when $L(\la)$ has only one block row or only one block column, respectively.

We introduce some auxiliary concepts and results before establishing the most important properties of block full rank pencils in Theorems \ref{th:1blockfullrank} and \ref{th:2blockfullrank}. We will say that a rational matrix $R(\la) \in \mathbb{F}(\la)^{p \times m}$ has full row rank in  $\Sigma \subseteq \mathbb{F}$ if, for all $\la_0\in \Sigma$, $R(\la_0)\in \mathbb{F}^{p \times m}$, i.e., $R(\la)$ is defined or bounded at $\la_0$, and $\rank R(\la_0) = p$. Observe that this implies that $R(\la)$ has no poles in $\Sigma$. The following lemma connects rational matrices with full row rank in $\Sigma$ with minimal bases, and establishes other properties that will be used later.

\begin{lem} \label{lem:fulltomin} Let $R(\la) \in \mathbb{F}(\la)^{p \times m}$ be a rational matrix with full row normal rank and let $T(\la) \in \mathbb{F}[\la]^{p \times m}$ be a minimal basis of the row space of $R(\la)$. Then the following statements hold:
\begin{enumerate}
\item[\rm (a)] There exists a unique regular rational matrix $S(\la) \in \mathbb{F}(\la)^{p \times p}$ such that $R(\la) = S(\la) T(\la)$.
\item[\rm (b)] $R(\la)$ has full row rank in  $\Sigma \subseteq \mathbb{F}$ if and only if $S(\la)$ in {\rm (a)} is regular in $\Sigma$.
\item[\rm (c)] $R(\la)$ is a polynomial matrix if and only if $S(\la)$ in {\rm (a)} is a polynomial matrix.
\item[\rm (d)] If $R(\la)$ is a matrix pencil, then $S(\la)$ in {\rm (a)} and $T(\la)$ are both matrix pencils.
\end{enumerate}
\end{lem}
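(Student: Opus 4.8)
The plan is to build the matrix $S(\la)$ directly from the relationship between $R(\la)$ and its minimal basis $T(\la)$, and then check each property in turn. First I would establish (a): since $T(\la)$ is a minimal basis of the row space of $R(\la)$, every row of $R(\la)$ is a rational combination of the rows of $T(\la)$, so there is a rational matrix $S(\la) \in \mathbb{F}(\la)^{p\times p}$ with $R(\la) = S(\la) T(\la)$. Uniqueness and regularity of $S(\la)$ both follow from the fact that $T(\la)$, being a minimal basis, has full row rank for all $\la_0 \in \F$ and in particular has full row normal rank $p$; hence $T(\la)$ has a rational right inverse $T(\la)^{+}$ (e.g.\ $T(\la)^T(T(\la)T(\la)^T)^{-1}$, noting $T(\la)T(\la)^T$ is a nonsingular rational matrix because $T(\la)$ has full row rank), so $S(\la) = R(\la) T(\la)^{+}$ is forced. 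Since $R(\la)$ also has full row normal rank $p$, the product $S(\la)T(\la)$ has normal rank $p$, which forces $\rank S(\la) = p$, i.e.\ $S(\la)$ is regular.

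For (b), I would argue both directions pointwise. If $S(\la)$ is regular in $\Sigma$, then for each $\la_0 \in \Sigma$ the matrix $S(\la_0)$ is defined and invertible and $T(\la_0)$ is defined (it is polynomial) with full row rank $p$; hence $R(\la_0) = S(\la_0)T(\la_0)$ is defined with full row rank, so $R(\la)$ has full row rank in $\Sigma$. Conversely, suppose $R(\la)$ has full row rank in $\Sigma$ and fix $\la_0 \in \Sigma$. Then $R(\la_0)$ is defined with rank $p$. To see $S(\la)$ is defined at $\la_0$, use the right inverse: $S(\la) = R(\la) T(\la)^{+}$, and $T(\la)^{+} = T(\la)^T(T(\la)T(\la)^T)^{-1}$ is defined at $\la_0$ because $T(\la_0)T(\la_0)^T$ is invertible (full row rank of the minimal basis at every point), so $S(\la_0)$ is defined; and $\rank S(\la_0) \ge \rank R(\la_0) = p$ since $R(\la_0) = S(\la_0)T(\la_0)$, forcing $S(\la_0)$ invertible. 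Thus $S(\la)$ is regular at every $\la_0 \in \Sigma$, i.e.\ regular in $\Sigma$.

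For (c), one direction is immediate: if $S(\la)$ is polynomial then $R(\la) = S(\la)T(\la)$ is a product of polynomial matrices, hence polynomial. For the converse, suppose $R(\la)$ is polynomial; I would show $S(\la)$ has no finite poles, so being rational and defined everywhere in $\F$ it must be polynomial. A finite point $\la_0$ is a pole of $S(\la)$ only if $S(\la)$ is not defined at $\la_0$; but by the same right-inverse computation as in (b), $S(\la) = R(\la) T(\la)^{+}$ is defined wherever $R(\la)$ and $T(\la)^{+}$ are defined, and both are defined at every finite point ($R$ polynomial, $T$ a minimal basis). Hence $S(\la)$ is defined at every finite point and is therefore polynomial. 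For (d), assume $R(\la)$ is a matrix pencil (degree $\le 1$). By (c), $S(\la)$ is a polynomial matrix; I would bound its degree by a degree count in $R(\la) = S(\la)T(\la)$. Since $T(\la)$ is a minimal basis it is row reduced, so in the product $S(\la)T(\la)$ no cancellation of leading terms occurs among the columns contributed by the highest-degree rows of $T(\la)$; concretely, writing $\delta_i$ for the $i$-th row degree of $T(\la)$ and $\sigma_{i}$ for the degree of the $i$-th column of $S(\la)$ (i.e.\ the max degree over entries of that column), row reducedness of $T(\la)$ gives that some entry of $R(\la)$ has degree $\sigma_i + \delta_i$, whence $\sigma_i + \delta_i \le 1$ for all $i$ with that column of $S$ nonzero. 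In particular $\deg S(\la) \le 1$, so $S(\la)$ is a pencil; and then $T(\la)$, having all row degrees $\le 1$ (since $\deg R \le 1$ and $S(\la)$ has an entry contributing degree $0$ along the relevant path, or more simply because $T(\la)$ is a minimal basis of the row space of a pencil so its row degrees cannot exceed $1$), is also a pencil.

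The main obstacle I anticipate is the degree bookkeeping in part (d): making rigorous the claim that row reducedness of $T(\la)$ prevents the leading coefficients from cancelling in the product $S(\la)T(\la)$, so that $\deg R(\la) = 1$ genuinely caps $\deg S(\la)$. This is the only place where the row-reduced half of the minimal-basis characterization (as opposed to the pointwise full-rank half) is essential, and it is where one must be careful rather than wave hands. The other parts reduce to routine pointwise arguments using the right inverse $T(\la)^{+}$ built from the minimal basis.
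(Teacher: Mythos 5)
Your overall architecture (build $S(\la)$, then check each part via a right inverse of $T(\la)$) is reasonable, but the specific right inverse you use creates a genuine gap. You take $T(\la)^{+}=T(\la)^T\bigl(T(\la)T(\la)^T\bigr)^{-1}$ and claim that $T(\la)T(\la)^T$ is nonsingular because $T(\la)$ has full row rank, and later that $T(\la_0)T(\la_0)^T$ is invertible because $T(\la_0)$ has full row rank. Over an algebraically closed field this is false: the transpose is not a conjugate transpose, so full row rank does not control $TT^T$. For instance $T(\la)=\begin{bmatrix}1 & i\end{bmatrix}$ over $\C$ is a perfectly good minimal basis ($p=1$, $m=2$), yet $T(\la)T(\la)^T=1+i^2=0$ identically. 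Since your arguments for the converse direction of (b) and for (c) both route through the evaluability of this particular $T(\la)^{+}$, they do not go through as written. The repair is to use the fact that a minimal basis has Smith form $[\,I_p \;\; 0\,]$ (no finite zeros), so $T(\la)=U(\la)[\,I_p\;\;0\,]V(\la)$ with $U,V$ unimodular and hence $T(\la)$ has a \emph{polynomial} right inverse $V(\la)^{-1}\left[\begin{smallmatrix}I_p\\ 0\end{smallmatrix}\right]U(\la)^{-1}$; this is exactly the device the paper uses in part (b) (it writes $R(\la)V(\la)^{-1}=[\,S(\la)U(\la)\;\;0\,]$ to conclude $S(\la)$ is defined wherever $R(\la)$ is), and it also gives (c) immediately, whereas the paper simply cites Forney's Main Theorem for (c).

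Two smaller points. In (a), uniqueness does not need a right inverse at all: $T(\la)$ has full row normal rank over $\F(\la)$, so $(S_1-S_2)T=0$ forces $S_1=S_2$; your regularity argument is fine and matches the paper's. In (d), what you are reconstructing is Forney's predictable degree property, $\deg(\mbox{row}_i\,R)=\max_j(\deg s_{ij}+\deg(\mbox{row}_j\,T))$, which the paper cites rather than reproves; your degree bookkeeping is in the right direction but mixes rows and columns of $S(\la)$, and your fallback claim that the row degrees of a minimal basis of the row space of a pencil are all at most $1$ is true but itself needs the componentwise minimality of minimal indices. The paper's route is cleaner: from the displayed identity, every nonzero $s_{ij}$ has degree at most $1$, and since $S(\la)$ is regular every column of $S(\la)$ has a nonzero entry, which forces $\deg(\mbox{row}_j\,T)\le 1$ for every $j$.
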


\begin{proof}
Part (a). Each row of $S(\la)$ is uniquely defined because its entries are the unique rational coefficients that allow us to express the corresponding row of $R(\la)$ as a unique linear combination of the rows of $T(\la)$. Moreover, $S(\la)$ must be regular since, otherwise, there would exist a nonzero vector $y(\la) \in \F (\la)^{p\times 1}$ such that $y(\la)^T S(\la) =0$. So, $y(\la)^T R(\la) =0$, which contradicts that $\rank R(\la) = p$.

Part (b). It is obvious that if $S(\la)$ is regular in $\Sigma$, then $R(\la)$ has full row rank in $\Sigma$, because $T(\la)$ is defined in $\Sigma$, as $T(\la)$ is a polynomial matrix, and $T(\la)$ has full row rank in $\Sigma$, since $T(\la)$ is a minimal basis. The proof of the converse implication starts by proving that if $R(\la)$ has full row rank in  $\Sigma$, then $S(\la)$ is defined in $\Sigma$. To see this, note that the Smith form of $T(\la)$ is $[I_p \; \; 0]$, because $T(\la)$ is a minimal basis and, therefore, does not have finite zeros. Thus, there exist unimodular matrices $U(\la)$ and $V(\la)$ such that $T(\la) = U(\la) \, [I_p \; \; 0] \, V(\la)$, and
$R(\la) V(\la)^{-1} = [S(\la)U(\la) \; \; 0]$. This shows that $C(\la) := S(\la)U(\la)$ is defined in $\Sigma$, because $R(\la)$ and $V(\la)^{-1}$ are both defined in $\Sigma$ ($R(\la)$ by hypothesis and $V(\la)^{-1}$ because is unimodular and so a polynomial matrix). Therefore, $S(\la) = C(\la) U(\la)^{-1}$ is defined in $\Sigma$. This implies that we can write $R(\la_0) = S(\la_0) T(\la_0)$ for each $\lambda_0 \in \Sigma$, which in turns implies that $S(\la_0)$ is invertible because $R(\la_0)$ has full row rank.

Part (c). It follows directly from \cite[Main Theorem, part 4]{forney}.

Part (d). From \cite[Main Theorem, part 4]{forney}, we have that
\begin{equation} \label{eq:gradosR}
\deg (\mbox{row}_i \, (R(\la))) = \max_{1\leq j \leq p} \, (\deg (s_{ij} (\la)) + \deg (\mbox{row}_j \, (T(\la)))) \leq 1, \qquad \mbox{for $1 \leq i \leq p$,}
\end{equation}
where $\mbox{row}_i \, (R(\la))$ denotes the $i$th row of $R(\la)$ and the maximum is taken over the nonzero entries $s_{ij} (\la)$ of $S(\la)$. Since all the rows of $T(\la)$ are different from zero, \eqref{eq:gradosR} implies that $\deg (s_{ij} (\la)) \leq 1$ for each nonzero entry of $S(\la)$. Moreover, each column of $S(\la)$ has at least one nonzero entry, because $S(\la)$ is regular, which, combined with \eqref{eq:gradosR}, implies that $\deg (\mbox{row}_j \, (T(\la))) \leq 1$, for each $j = 1,\ldots , p$.
\end{proof}

The last concepts we need before stating and proving the main Theorem \ref{th:1blockfullrank} are those of rational basis and dual rational bases. A rational matrix $G(\la)\in \F (\la)^{p\times m}$ (with $p<m$) is said to be a rational basis if it is a basis of the rational subspace spanned by its rows, i.e., if it has full row normal rank. Two rational bases $G(\la) \in \F (\la)^{p\times m}$ and $H(\la) \in \F (\la)^{q\times m}$ are said to be dual if $p+q = m$, and $G(\la) \, H(\la)^T =0$. We are finally ready for presenting the main result of this section.

\begin{theo} \label{th:1blockfullrank} Let $L(\la)$ be a block full rank pencil as in \eqref{almost} and let $N_1 (\la)$ and $N_2 (\la)$ be any rational bases dual to $K_1 (\la)$ and $K_2 (\la)$, respectively. Let $\Omega\subseteq\efe$ be nonempty. If $K_i (\la)$ and $N_i (\la)$ have full row rank in $\Omega$, for $i =1,2,$ then
$L(\la)$ is a linearization with empty state matrix of the rational matrix $G(\la) = N_{2}(\la) M(\la) N_{1}(\la)^{T}$ in $ \Omega$.
\end{theo}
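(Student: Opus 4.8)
The strategy is to reduce Theorem \ref{th:1blockfullrank} to the already-proved statement about strong block minimal bases pencils (\cite[Theorem 3.3]{BKL}, restated via Definitions \ref{def:linsubset}, \ref{def:strongrade}), by replacing the full-row-rank pencils $K_1(\la)$ and $K_2(\la)$ by genuine minimal bases using Lemma \ref{lem:fulltomin}, and then tracking the effect of that replacement through an equivalence at each point of $\Omega$. First I would pick minimal bases $T_1(\la)$ and $T_2(\la)$ of the row spaces of $K_1(\la)$ and $K_2(\la)$; by Lemma \ref{lem:fulltomin}(d) these are pencils, and by parts (a)--(b) there are regular rational matrices $S_1(\la)$, $S_2(\la)$, regular in $\Omega$, with $K_i(\la)=S_i(\la)T_i(\la)$. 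Then the block full rank pencil factors as
\begin{equation*}
L(\la)=\begin{bmatrix} I & 0\\ 0 & S_1(\la)\end{bmatrix}
\begin{bmatrix} M(\la) & T_2(\la)^T\\ T_1(\la) & 0\end{bmatrix}
\begin{bmatrix} I & 0\\ 0 & S_2(\la)^T\end{bmatrix}
=: \mathcal{S}_1(\la)\,\widetilde L(\la)\,\mathcal{S}_2(\la),
\end{equation*}
where $\widetilde L(\la)$ is an honest block minimal bases pencil and $\mathcal{S}_1(\la),\mathcal{S}_2(\la)$ are square and regular in $\Omega$ (their determinants are $\det S_1$, $\det S_2$ up to sign). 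So $L(\la)$ and $\widetilde L(\la)$ are equivalent in $\Omega$, hence equivalent at each $\la_0\in\Omega$ by Proposition \ref{local property}.

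Next I would identify the rational matrix that $\widetilde L(\la)$ linearizes. Since $K_i(\la)$ and $T_i(\la)$ span the same row space, a rational basis dual to $K_i(\la)$ is the same as a rational basis dual to $T_i(\la)$; in particular the given $N_1(\la)$, $N_2(\la)$ are dual rational bases of $T_1(\la)$, $T_2(\la)$. One then needs to connect $N_i(\la)$ to the \emph{minimal} bases dual to $T_i(\la)$ that \cite[Theorem 3.3]{BKL} requires: if $\widehat N_i(\la)$ is a polynomial (minimal) basis dual to $T_i(\la)$, then $N_i(\la)=E_i(\la)\widehat N_i(\la)$ for a unique regular rational $E_i(\la)$ (same argument as Lemma \ref{lem:fulltomin}(a), applied to the dual side), and the hypothesis that $N_i(\la)$ has full row rank in $\Omega$ forces $E_i(\la)$ to be regular in $\Omega$ (here I would invoke part (b) of Lemma \ref{lem:fulltomin} once more, or re-run its short proof on $N_i$). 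By \cite[Theorem 3.3]{BKL} / the restatement in the excerpt, $\widetilde L(\la)$ is a linearization in $\F$ with empty state matrix of the polynomial matrix $\widehat G(\la):=\widehat N_2(\la) M(\la)\widehat N_1(\la)^T$, i.e.\ $\widetilde L(\la)$ is equivalent in $\F$ (a fortiori in $\Omega$) to $\diag(\widehat G(\la), I_{s})$ for the appropriate $s$. On the other hand $G(\la)=N_2(\la)M(\la)N_1(\la)^T = E_2(\la)\,\widehat G(\la)\,E_1(\la)^T$, so $\diag(G(\la),I_s)$ and $\diag(\widehat G(\la),I_s)$ are equivalent in $\Omega$ via $\diag(E_2,I_s)$ and $\diag(E_1^T,I_s)$. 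Chaining these equivalences, $L(\la)$ is equivalent in $\Omega$ to $\diag(G(\la),I_s)$ through matrices regular in $\Omega$. Finally, invoking Remark \ref{extremecases}(2) (empty state matrix: condition (a) is automatic, and condition (b) with $\widehat G$ there equal to $L(\la)$ is exactly the equivalence just established, with sizes matched by the block structure $q-p=r-m$ read off from \eqref{almost}), one concludes that $L(\la)$ is a linearization of $G(\la)$ in $\Omega$ with empty state matrix.

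\textbf{Main obstacle.}
The delicate point is the bookkeeping on the dual side: \cite[Theorem 3.3]{BKL} is phrased for \emph{minimal} bases $\widehat N_i$ dual to $T_i$, whereas we are handed arbitrary \emph{rational} bases $N_i$ dual to $K_i$, only assumed to have full row rank in $\Omega$ (and nothing at poles outside $\Omega$, nor any degree normalization). I must check that the bridging factors $E_i(\la)$ exist, are unique, are regular exactly where $N_i$ has full row rank, and that no spurious rank drop or size mismatch is introduced when passing between $G$ and $\widehat G$ — in particular that $\rank G(\la)=\rank\widehat G(\la)$ so the integer $s$ in the diagonal padding is the same on both sides, which follows since $E_1,E_2$ are regular. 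A secondary (routine but error-prone) point is confirming that all the outer factors assembled above are \emph{square} and regular in $\Omega$, so that Definition \ref{def:equivalent}(iii) genuinely applies; the block-diagonal structure with identity blocks makes this straightforward once the pieces are in place, but it is where an off-by-one in the block dimensions would bite.
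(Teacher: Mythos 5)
Your proposal is correct and follows essentially the same route as the paper: factor out the regular-in-$\Omega$ left factors of $K_1,K_2$ via Lemma \ref{lem:fulltomin} to expose a genuine block minimal bases pencil, invoke \cite[Theorem 3.3]{BKL} on it, and bridge $N_i$ to a minimal dual basis by another regular-in-$\Omega$ factor before chaining the equivalences. The only cosmetic difference is that the paper takes the minimal basis $\widetilde N_i$ of the row space of $N_i$ and checks it is dual to $\widetilde K_i$, whereas you fix a minimal basis dual to $T_i$ first and then write $N_i$ in terms of it; the two maneuvers are interchangeable.
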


\begin{proof} In order to simplify the notation, throughout this proof we do not specify the sizes of different identity matrices and all of them are denoted by $I$. Let $\widetilde{K}_1 (\la), \widetilde{K}_2 (\la), \widetilde{N}_1 (\la)$ and $\widetilde{N}_2 (\la)$ be minimal bases of the row spaces of $K_1 (\la)$, $K_2 (\la)$, $N_1 (\la)$ and $N_2 (\la)$, respectively. Then, Lemma \ref{lem:fulltomin} implies that there exist regular rational matrices $S_1 (\la)$, $S_2 (\la)$, $W_1 (\la)$ and $W_2 (\la)$ such that
\begin{eqnarray*}
	& K_i (\la) = S_i (\la) \widetilde{K}_i (\la), &  \mbox{and $S_i (\la)$ is regular in $\Omega$, for $i=1,2.$} \\
	& N_i (\la) = W_i (\la) \widetilde{N}_i (\la), &  \mbox{and $W_i (\la)$ is regular in $\Omega$, for $i=1,2.$}
\end{eqnarray*}
Moreover, $\widetilde{K}_1 (\la), \widetilde{K}_2 (\la), S_1(\la)$ and $S_2(\la)$ are all matrix pencils. Then, $L(\la)$ can be factorized as follows,
\begin{equation} \label{eq:fact1fullrank}
	L(\la) =
    \left[
	\begin{array}{cc}
	I & 0\\
	0 & S_1 (\lambda)
	\end{array}
	\right]
	\left[
	\begin{array}{cc}
	M(\la) & \widetilde{K}_2 (\la)^T \\
	\widetilde{K}_1 (\lambda) &0
	\end{array}
	\right]
    \left[
	\begin{array}{cc}
	I & 0\\
	0 & S_2 (\lambda)^T
	\end{array}
	\right],
\end{equation}
where the first and third factors are regular in $\Omega$. Note that the factor in the middle is a block minimal bases pencil associated with the polynomial matrix $\widetilde{N}_{2}(\la) M(\la) \widetilde{N}_{1}(\la)^{T}$, since the regularity of $S_i (\la)$ and $W_i (\la)$ implies that $\widetilde{K}_{i}(\la)$ and $\widetilde{N}_{i}(\la)$ are dual minimal bases for $i=1,2$. Then, there exist unimodular matrices $U(\la)$ and $V(\la)$ such that
\begin{align} \nonumber
\left[
	\begin{array}{cc}
	M(\la) & \widetilde{K}_2 (\la)^T \\
	\widetilde{K}_1 (\lambda) &0
	\end{array}
	\right]
& = U(\la)
\left[
	\begin{array}{cc}
	\widetilde{N}_{2}(\la) M(\la) \widetilde{N}_{1}(\la)^{T} & 0\\
	0 &  I
	\end{array}
	\right]
V(\la) \\ \label{eq:fact2fullrank}
& = U(\la)
\left[
	\begin{array}{cc}
	W_2 (\la)^{-1} & 0\\
	0 & I
	\end{array}
	\right]
\left[
	\begin{array}{cc}
	G(\la) & 0\\
	0 &  I
	\end{array}
	\right]
\left[
	\begin{array}{cc}
	W_1 (\la)^{-T}  & 0\\
	0 & I
	\end{array}
	\right]
V(\la),
\end{align}
where $U(\la) \diag (W_2 (\la)^{-1} , I)$ and $\diag (W_1 (\la)^{-T} , I) V(\la)$ are regular in $\Omega$. From combining \eqref{eq:fact1fullrank} and \eqref{eq:fact2fullrank}, we obtain that $L(\la)$ and $\diag (G (\la) , I)$ are equivalent in $\Omega$. This proves that $L(\la)$ is a linearization with empty state matrix of $G(\la)$ in $\Omega$ according to Definitions \ref{def_pointstronglin} and \ref{def:linsubset}, since the minimality condition is automatically satisfied if the state matrix is empty.
\end{proof}

\begin{rem} {\rm
In the scenario of Theorem \ref{th:1blockfullrank}, Theorem \ref{theo:spectral} guarantees that the elementary divisors of $L(\la)$ in $\Omega$  coincide exactly with the zero elementary divisors of $G(\la)$ in $\Omega$. Moreover, it is clear from the expression $G(\la) = N_{2}(\la) M(\la) N_{1}(\la)^{T}$ that $G(\la)$ does not have poles in $\Omega,$ since the matrices $N_{i}(\la)$ must be defined in $\Omega $ but they are not defined at the poles of $G(\la)$. Thus, $G(\la)$ has only eigenvalues in $\Omega$, and all the information about them, i.e., geometric, algebraic and partial multiplicities, is contained in $L(\la)$.}
\end{rem}

\begin{rem}\rm
	If in Theorem \ref{th:1blockfullrank}, $K_1(\la)$ (resp. $K_2(\la) $)
	is an empty matrix, we can take any rational matrix $N_1(\la)\in \F (\la)^{s_1 \times s_1}$ (resp. $N_2(\la) \in \F (\la)^{s_2 \times s_2}$) regular in $\Omega$, where $s_1$ (resp. $s_2$) is the number of colums (resp. rows) of $M(\la)$. The standard choices are $N_1(\la)= I_{s_1}$ and $N_2(\la)= I_{s_2}$.
\end{rem}

\begin{rem}\label{rem.blockfullassoc} \rm Under the conditions of Theorem \ref{th:1blockfullrank}, we will say for brevity that ``$L(\la)$ is a block full rank pencil associated with $G(\la)$ in $\Omega$''. We emphasize that this ``association'' is not one-to-one because there are infinitely many rational bases $N_1 (\la)$ and $N_2 (\la)$ dual to $K_1 (\la)$ and $K_2 (\la)$.
\end{rem}

Next, we present sufficient conditions for a block full rank pencil to be a linearization of $G(\la) = N_{2}(\la) M(\la) N_{1}(\la)^{T}$ at $\infty$ of a certain grade $g$. In order to avoid cases with limited interest in applications that complicate the statement, in Theorem \ref{th:2blockfullrank} we assume $\deg (L(\la)) =1$.

\begin{theo} \label{th:2blockfullrank} Let $L(\la)$ be a block full rank pencil as in \eqref{almost} with $\deg (L(\la)) =1$ and let $N_1 (\la)$ and $N_2 (\la)$ be rational bases dual to $K_1 (\la)$ and $K_2 (\la)$, respectively. If, for $i=1,2$, $\rev_1 K_i (\la)$ has full row rank at zero, and there exists an integer number $t_i$ such that $\rev_{t_i} N_i (\la)$ has full row rank at zero, then $L(\la)$ is a linearization with empty state matrix of the rational matrix $G(\la) = N_{2}(\la) M(\la) N_{1}(\la)^{T}$ at $\infty$ of grade $1 + t_1 + t_2$.
\end{theo}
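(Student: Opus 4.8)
The plan is to deduce the statement from its already-proved finite counterpart, Theorem~\ref{th:1blockfullrank}, by applying that result to the reversal $\rev L(\la)$ at the single point $0$, and then to translate the conclusion back to $\infty$ by means of Proposition~\ref{prop_lininf}. Recall that, since $\deg(L(\la))=1$, the reversal of $L(\la)$ (viewed as a polynomial system matrix with empty state matrix) is
\[
\rev L(\la)=\begin{bmatrix}\rev_1 M(\la) & (\rev_1 K_2(\la))^T\\ \rev_1 K_1(\la) & 0\end{bmatrix},
\]
and Proposition~\ref{prop_lininf} says that $L(\la)$ is a linearization of $G(\la)$ at $\infty$ of grade $1+t_1+t_2$ if and only if $\rev L(\la)$ is a linearization of $\rev_{1+t_1+t_2}G(\la)$ at $0$; moreover this correspondence respects the (empty) partition of the state matrix.

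First I would check that $\rev L(\la)$ is again a block full rank pencil. Each of $\rev_1 M(\la)$, $\rev_1 K_1(\la)$, $\rev_1 K_2(\la)$ is a pencil, and since the map $R(\la)\mapsto\la\, R(1/\la)$ is the composition of the field automorphism $\la\mapsto 1/\la$ of $\F(\la)$ with multiplication by the nonzero element $\la$, it preserves normal rank; hence $\rev_1 K_1(\la)$ and $\rev_1 K_2(\la)$ still have full row normal rank. Next I would exhibit dual rational bases for its off-diagonal blocks: for $i=1,2$ the elementary computation
\[
(\rev_1 K_i(\la))\,(\rev_{t_i}N_i(\la))^T=\la^{1+t_i}\,K_i(1/\la)N_i(1/\la)^T=\la^{1+t_i}\cdot 0=0,
\]
together with the fact that reversal changes neither sizes nor normal rank, shows that $\rev_{t_i}N_i(\la)$ is a rational basis dual to $\rev_1 K_i(\la)$. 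By hypothesis, for $i=1,2$, $\rev_1 K_i(\la)$ has full row rank at $0$ and $\rev_{t_i}N_i(\la)$ has full row rank at $0$, so Theorem~\ref{th:1blockfullrank} applies to $\rev L(\la)$ with these dual bases and with $\Omega=\{0\}$. It yields that $\rev L(\la)$ is a linearization with empty state matrix, at $0$, of
\[
\widehat G(\la):=(\rev_{t_2}N_2(\la))\,(\rev_1 M(\la))\,(\rev_{t_1}N_1(\la))^T.
\]

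It then remains only to identify $\widehat G(\la)$. Expanding the three reversals,
\[
\widehat G(\la)=\la^{t_2}N_2(1/\la)\cdot \la M(1/\la)\cdot\la^{t_1}N_1(1/\la)^T=\la^{\,1+t_1+t_2}\,G(1/\la)=\rev_{1+t_1+t_2}G(\la),
\]
using $G(\la)=N_2(\la)M(\la)N_1(\la)^T$. Thus $\rev L(\la)$ is a linearization of $\rev_{1+t_1+t_2}G(\la)$ at $0$ with empty state matrix, and Proposition~\ref{prop_lininf} (with $g=1+t_1+t_2$) gives exactly that $L(\la)$ is a linearization of $G(\la)$ at $\infty$ of grade $1+t_1+t_2$ with empty state matrix.

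I expect the only delicate part to be the reversal bookkeeping: verifying that $\rev$ distributes over $L(\la)$ blockwise with exponent precisely $1$ (this is where $\deg(L(\la))=1$ is used), that $\rev_{t_i}N_i(\la)$ is dual to $\rev_1 K_i(\la)$ — and not to some other reversal of $K_i(\la)$ — so that Theorem~\ref{th:1blockfullrank} can be invoked with the matching data, and that the exponents $t_2$, $1$, $t_1$ in the definition of $\widehat G(\la)$ combine to give the asserted grade $1+t_1+t_2$. Everything else (rank preservation under $\la\mapsto 1/\la$, the block-structure check, and the passage through Proposition~\ref{prop_lininf}) is routine once these exponents are tracked correctly.
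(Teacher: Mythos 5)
Your proposal is correct and follows essentially the same route as the paper: form $\rev L(\la)$, check that $\rev_{t_i}N_i(\la)$ is a rational basis dual to $\rev_1 K_i(\la)$, apply Theorem~\ref{th:1blockfullrank} at $0$ to identify the transfer function as $\rev_{1+t_1+t_2}G(\la)$, and conclude via Proposition~\ref{prop_lininf}. The exponent bookkeeping you flag as delicate is handled exactly as you describe, so no changes are needed.
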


\begin{proof} Note that
\[
\rev  L(\la) =
	\left[
	\begin{array}{cc}
	\rev_1 M(\la) & \rev_1 K_2 (\la)^T \\
	\rev_1 K_1 (\lambda) &0
	\end{array}
	\right]
\]
is a block full rank pencil. Moreover, for $i=1,2$, $\rev_{t_i} N_i(\la)$ has full row normal rank, and $K_i (\la)\, N_i(\la)^T =0$ implies $(\rev_1 K_i (\la))\,  (\rev_{t_i} N_i(\la))^T =0$. Therefore, $\rev_{t_i} N_i(\la)$ is a rational basis dual to $\rev_1 K_i (\la)$. Then, Theorem \ref{th:1blockfullrank} applied to $\rev L(\la)$ proves that $\rev L(\la)$ is a linearization with empty state matrix at zero of
\[
(\rev_{t_2} N_2(\la)) \, (\rev_1 M(\la)) \, (\rev_{t_1} N_1(\la)^T) = \rev_{1 + t_1 + t_2} G(\la),
\]
which combined with Proposition \ref{prop_lininf} proves the result.
\end{proof}
As a consequence of Theorems \ref{th:1blockfullrank} and \ref{th:2blockfullrank}, we obtain Corollary \ref{cor_locallin}. Although it follows immediately from them, we state it since it generalizes the structure of most of the linearizations of rational approximations of NLEPs that appear in the literature. Moreover, it is very useful in order to characterize easily some pencils as linearizations of rational matrices locally and to obtain the information about the zeros of such rational matrices in subsets not containing poles.
\begin{cor}\label{cor_locallin}
	Let $$R(\la)=(A_0-\la B_0)R_0(\la)+ (A_1-\la B_1)R_1(\la) + \cdots + (A_N-\la B_N)R_N(\la)$$ be a $p\times m$ rational matrix written in terms of some matrix pencils $A_i-\la B_i \in \F[\la]^{p \times n_i}$ and rational matrices $R_{i}(\la) \in \F(\la)^{n_i \times m}$. Define
	\begin{align*}
	M(\la)&:=\left[(A_0-\la B_0)\quad (A_1-\la B_1)\quad \cdots \quad (A_N-\la B_N) \right] \; \mbox{and} \\
	N_1 (\la) &:=\left[R_0(\la)^{T}\quad R_1(\la)^{T}\quad \cdots \quad R_N(\la)^{T} \right],
	\end{align*}
	and assume that $N_1(\la)$ has full row normal rank. Let $L(\la)=\left[\begin{array}{c}
	M(\la)\\
	\hdashline[2pt/2pt]
	K_1(\la)
	\end{array}\right]$ be a block full rank pencil of degree $1$ with only one block column and such that $K_1(\la)$ and $N_1 (\la)$ are dual rational bases. Let $\Omega\subseteq\efe$ be nonempty. Then the following statements hold:
	\begin{itemize}
		\item[\rm(a)] If $K_1(\la)$ and $N_1 (\la)$ have full row rank in $\Omega$ then $L(\la)$ is a linearization with empty state matrix of $R(\la)$ in $ \Omega . $
		\item[\rm(b)] If $\rev_1 K_1(\la)$ has full row rank at $0,$ and there exists an integer $t$ such that $\rev_t N_1(\la)$ has full row rank at $0$, then $L(\la)$ is a linearization with empty state matrix of $R(\la)$ at $\infty$ of grade $1+t.$
		
	\end{itemize}
\end{cor}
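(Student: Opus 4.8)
The plan is to deduce the corollary directly from Theorems \ref{th:1blockfullrank} and \ref{th:2blockfullrank}, specialized to a block full rank pencil $L(\la)$ with a single block column, that is, with $K_2(\la)$ taken to be an empty matrix and, correspondingly, $N_2(\la)$ chosen as the identity matrix $I_p$ (whose size equals the number of rows $p$ of $M(\la)$, and which is trivially a rational basis dual to the empty matrix $K_2(\la)$). The first step is the bookkeeping observation that, with these choices, the rational matrix that Theorems \ref{th:1blockfullrank} and \ref{th:2blockfullrank} attach to $L(\la)$ is
\[
N_2(\la)\, M(\la)\, N_1(\la)^T = M(\la) \begin{bmatrix} R_0(\la) \\ \vdots \\ R_N(\la) \end{bmatrix} = \sum_{i=0}^{N} (A_i - \la B_i)\, R_i(\la) = R(\la),
\]
so the rational matrix being linearized is precisely $R(\la)$. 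I would also record that the hypotheses of the corollary supply the remaining ingredients needed by the ambient theorems: $N_1(\la)$ has full row normal rank by assumption, $K_1(\la)$ has full row normal rank because $L(\la)$ is a block full rank pencil, and $K_1(\la)$ and $N_1(\la)$ are dual rational bases by hypothesis.

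For part (a), I would invoke Theorem \ref{th:1blockfullrank} with the given set $\Omega$. By hypothesis $K_1(\la)$ and $N_1(\la)$ have full row rank in $\Omega$; the empty block $K_2(\la)$ imposes no condition, and $N_2(\la) = I_p$ is regular at every point of $\Omega$. Hence $L(\la)$ is a linearization with empty state matrix of $N_2(\la) M(\la) N_1(\la)^T = R(\la)$ in $\Omega$, which is statement (a).

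For part (b), I would invoke Theorem \ref{th:2blockfullrank}, which applies because $\deg(L(\la)) = 1$ by hypothesis. Set $t_1 := t$: then $\rev_1 K_1(\la)$ has full row rank at $0$ and $\rev_{t_1} N_1(\la)$ has full row rank at $0$, both by hypothesis. For the empty block $K_2(\la)$ take $N_2(\la) = I_p$ and $t_2 := 0$, so that the condition on $\rev_1 K_2(\la)$ is vacuous and $\rev_{t_2} N_2(\la) = I_p$ has full row rank at $0$. Theorem \ref{th:2blockfullrank} then gives that $L(\la)$ is a linearization with empty state matrix of $R(\la)$ at $\infty$ of grade $1 + t_1 + t_2 = 1 + t$. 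There is essentially no obstacle in this argument; the only point deserving a line of justification is the legitimacy of the empty-$K_2(\la)$ specialization of Theorems \ref{th:1blockfullrank} and \ref{th:2blockfullrank}, which is exactly the situation discussed in the remark following Theorem \ref{th:1blockfullrank} (one may always take $N_2(\la)$ to be an identity matrix, regular in $\Omega$ and with a trivial reversal at $0$). The rest is notation matching and checking trivial rank conditions on identity matrices.
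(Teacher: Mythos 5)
Your proposal is correct and matches the paper's intent exactly: the paper states that Corollary \ref{cor_locallin} ``follows immediately'' from Theorems \ref{th:1blockfullrank} and \ref{th:2blockfullrank}, and your specialization with $K_2(\la)$ empty, $N_2(\la)=I_p$, $t_2=0$, together with the identity $M(\la)N_1(\la)^T=R(\la)$, is precisely the intended deduction (the legitimacy of the empty-$K_2$ case is indeed covered by the remark following Theorem \ref{th:1blockfullrank}).
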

	
	\begin{rem} \rm
		We emphasize that in some relevant applications the rational matrices $R_i(\la)$ of Corollary \ref{cor_locallin} are just of the form $R_i(\la)=r_i(\la)I_m,$ where $r_i(\la)$ are scalar rational functions, and/or most of the pencils $A_i -\la B_i$ are constant matrices or a linear scalar function times a constant matrix. Moreover, in some other applications a low rank structure is present in $R(\la)$, that is, some of the terms in $R(\la)$ have a rank much smaller than $\min \{ p,m\}$, and the corresponding rational matrices are written in the form $R_i(\la)=r_{i}(\la)R_i$, where $R_i \in \F^{n_i \times m}$ is a constant matrix with $n_i \ll m.$
	\end{rem}

In the next two examples, we revisit the pencils introduced in Examples \ref{ex_saad} and \ref{ex_subai} from the perspective of the block full rank pencils. These examples illustrate how the theory of block full rank pencils may simplify the analysis of the properties of important linearizations of rational matrices when one is not interested on the information about the poles.

\begin{example}\rm\label{ex:saadrevisited} Let us consider the rational matrix $G(\la)$ and the pencil $P(\la)$ in Example \ref{ex_saad}. We partition $P(\la)$ as follows:
	$$ P(\la) =
		\left[
		\begin{array}{ccccc}
		(\la - \sigma_1) I & & & & I \\
		&(\la - \sigma_2) I & & & I \\
		& & \ddots & & \vdots \\
		&& &(\la - \sigma_s) I & I \\ \hdashline[2pt/2pt]
		-B_1 & -B_2 &\cdots & -B_s & \la A_0 - B_0\\
		\end{array}
		\right] =: \left[
		\begin{array}{c}
		K_1 (\lambda) \\  \hdashline[2pt/2pt]
		M(\la)
		\end{array}
		\right].$$
	Observe that, in the above partition, we are considering a permuted version of the structure of the pencil $L(\la)$ in Corollary \ref{cor_locallin}. Note now that $K_1 (\la)$ has full row rank in $\mathbb{C}$, and
		$$N_1 (\la) := \left[
		\begin{array}{ccccc}
		\frac{1}{\sigma_1 - \la} I & \frac{1}{\sigma_2 - \la} I & \ldots & \frac{1}{\sigma_s - \la} I & I
		\end{array}
		\right]$$ is a rational basis dual to $K_1 (\la)$ with full row rank in $\Sigma:=\mathbb{C} \setminus \{\sigma_1 , \ldots , \sigma_s \}$.
Then, by Corollary \ref{cor_locallin}$\rm (a)$, $P(\la)$ is a linearization with empty state matrix of $G(\la)$ in $\Sigma.$ Moreover, note that $\rev_1 K_1(\la)$ and $\rev_0 N_1(\la)= \left[
\begin{array}{ccccc}
\frac{\la}{\la\sigma_1 - 1} I & \frac{\la}{\la\sigma_2 - 1} I & \ldots & \frac{\la}{\la\sigma_s - 1} I & I
\end{array}
\right]$ both have full row rank at $0$. Thus, by Corollary \ref{cor_locallin}$\rm (b)$, $P(\la)$ is a linearization with empty state matrix of $G(\la)$ at $\infty $ of grade $1.$
\end{example}

\begin{example}\rm\label{ex:subairevisited} Let us consider the rational matrix $G(\la)$ and the pencil $L(\la)$ in Example \ref{ex_subai}. We now consider the following partition of $L(\la)$:
	\begin{equation} \label{lin_subai2}
	L(\la) =
	\left[
	\begin{array}{ccccc}
	\la D_q + D_{q-1} & D_{q-2} & \cdots & D_0 & -C \\ 	\hdashline[2pt/2pt]
	-I_m & \la I_m & 0 & \cdots & 0 \\
	& \ddots & \ddots & \vdots & \vdots \\
	&  & -I_m & \la I_m & 0 \\
	&  &  & B & \la I_n - A
	\end{array}
	\right]:=\left[\begin{array}{c}
	M(\la)\\
	\hdashline[2pt/2pt]
	K_1(\la)
	\end{array}\right] .
	\end{equation}
Since $K_1(\la)$ has full row normal rank, $L(\la)$ has the structure of the block full rank pencil in Corollary \ref{cor_locallin}. Observe that
\begin{equation}\label{dual}N_1(\la):= [\la^{q-1}I_m\quad \la^{q-2} I_m \quad \cdots \quad I_m \quad -B^T (\la I_n - A)^{-T} ]
\end{equation}
is a rational basis dual to $K_1(\la)$ and that $K_1(\la)$ and $N_1(\la)$ have both full row rank in $\Omega:=\{\la\in\F : \la \text{ is not eigenvalue of }A \}.$ Thus, Corollary \ref{cor_locallin}$\rm(a)$ implies that $L(\la)$ is a linearization with empty state matrix of $G(\la)$ in $\Omega.$ This example, together with Example \ref{ex_subai}, illustrates a very important fact that we have already mentioned: the same pencil can be viewed as a linearization with different state matrices. Moreover, different views may require different conditions, may lead to different sets where the pencil is a linearization, and may differ in the difficulty to get the results. For instant, when the developments in this example are compared with the direct application of the definition of linearization presented in the second approach in Example \ref{ex_subai} through the matrices $V(\la)$ and $U(\la)$ in \eqref{eq_V} and \eqref{eq_U}, respectively, we can conclude that the ``block full rank pencil'' view leads to the same results in a much simpler way. We have experimented the simplicity of the ``block full rank pencil'' approach in many other examples.

Finally, note that the pencil in \eqref{lin_subai2} satisfies that $\rev_1 K_1(\la)$ has full row rank at $0$ and that $N_1(\la)$ in \eqref{dual} satisfies that $\rev_{q-1} N_1(\la)$ has also full row rank at $0$. Thus, Corollary \ref{cor_locallin}$\rm (b)$ implies that $L(\la)$ is a linearization with empty state matrix of $G(\la)$ at $\infty$ of grade $q$. By comparing this result with the result in Example \ref{ex_subaiinfinity}, we see that considering $L(\la)$ as a block full rank pencil leads to much stronger results on the structure at infinity than considering $L(\la)$ as a polynomial system matrix with state matrix $A(\la)$ in \eqref{ex_statematrix}. In the former case, we do not need any extra hypothesis in order $L(\la)$ to be a linearization at infinity, while in the latter the condition $\rank D_q =m$ is needed.
\end{example}

As previously announced, the results in this section will be used in Section \ref{guttel}. In addition, in a future work \cite{local2}, we will extend them. More precisely, we will define block full rank linearizations of rational matrices with non empty state matrix that, therefore, will contain information about the poles. Moreover, we will apply these results to establish rigorously and very easily the properties of the linearizations introduced in \cite{automatic}.
	
\section{Application of the local linearization theory to NLEIGS pencils}\label{guttel}
In this section we study in depth the pencils introduced in the influential reference \cite{nlep}. This reference presents one of the first systematic approaches for solving large scale NLEPs. The approach in \cite{nlep} consists essentially of three steps: (1) the matrix defining the NLEP is approximated by a rational matrix $Q_N(\la)$ via Hermite's interpolation in a certain compact target set $\Sigma \subset \mathbb{C}$ where the eigenvalues of interest are located; (2) the obtained rational matrix is linearized by using a certain pencil $L_N(\la)$; and (3) a highly structured rational Krylov method is applied to the pencil to compute the eigenvalues of $Q_N (\la)$ in $\Sigma$. For brevity of exposition, and also for recognizing the key contribution of \cite{nlep}, we will call {\em NLEIGS pencils} to the pencils introduced in this reference. The main goal of this section is to replace the vague usage of the word ``linearization'' in \cite{nlep} by a number of rigorous results on NLEIGS pencils which, combined with the results in Sections \ref{sec_local} and \ref{sec-full-rank-pencils}, establish the precise properties enjoyed with respect to eigenvalues (and poles) of the NLEIGS pencils. We remark that NLEIGS pencils $L_N (\la)$ were the initial motivation for developing the results of this paper, since $L_N (\la)$ is not a linearization of the rational matrix $Q_N(\la)$, according to the definitions of linearization and strong linearization presented in \cite{strong} or \cite{AlBe16}.

Since we are interested in rational matrices and their linearizations, all the delicate details about how the rational matrices $Q_N(\la)$ are constructed as approximations of the original NLEPs are omitted. Such details can be found in \cite{nlep}. As in the rest of the paper, the results in this section are valid and are stated in any algebraically closed field $\mathbb{F}$ that does not include infinity. Note, however, that reference \cite{nlep} considers only the complex field and that this restriction is important in the approximation phase of the NLEP. Moreover, although \cite{nlep} deals with regular rational matrices $Q_N (\la)$, we will not impose such condition initially in our developments.

Reference \cite{nlep} uses two families of rational matrices, and corresponding pencils, depending on whether or not a certain low rank structure is present in the original NLEP. We will refer to them as the {\em NLEIGS basic problem} and the {\em NLEIGS low rank structured problem}, respectively. The NLEIGS pencils corresponding to each of these two cases will be studied from two perspectives giving rise to the four subsections included in this section. These two perspectives are considering NLEIGS pencils as block full rank pencils and, thus, as linearizations with empty state matrices, and considering them as polynomial system matrices with transfer function matrices equivalent to $Q_N (\la)$ everywhere except at a point $\xi_N$. Both perspectives allow us to state in a rigorous sense that NLEIGS pencils are linearizations of $Q_N(\la)$, but the one based on block full rank pencils is much simpler, does not require any hypothesis and covers fully the applications of interest in \cite{nlep}. In contrast, the polynomial system matrix perspective provides more information on $Q_N(\la)$ but at the cost of extra hypotheses which are not imposed in \cite{nlep} and that require considerable effort to check.

\subsection{The NLEIGS basic problem from the point of view of block full rank pencils} \label{sec.NLEIGSbasic1} The families of rational matrices considered in \cite{nlep} are defined in terms of the following parameters: a list of nodes $(\sigma_{0},\sigma_{1},\ldots,\sigma_{N-1})$ in $ \FF$, a list of nonzero poles $(\xi_{1},\xi_{2},\ldots,\xi_{N})$ in $ \FF  \, \cup \, \{\infty\}$, and a list of nonzero scaling parameters $(\beta_{0},\beta_{1},\ldots,\beta_N) $ in $\FF $. It is important to bear in mind that \cite{nlep} assumes that the poles are all distinct from the nodes. However, we do not assume such property, except in a few results where it will be explicitly stated. With these parameters, the following sequence of rational scalar functions is defined:
\begin{equation}\label{eq_interpolant}
b_0(\la)=\frac{1}{\beta_0}, \quad
b_{i}(\lambda)=\dfrac{1}{\beta_{0}}\displaystyle\prod_{k=1}^{i}\dfrac{\la - \sigma_{k-1}}{\beta_{k}(1- \la /\xi_{k})},\quad i= 1,\ldots , N .
\end{equation}
Let us now define the linear scalar functions
\begin{equation}\label{scalar_functions}
g_{i}(\la):=\beta_{i}\left(1-\la /\xi_{i}\right),\quad \text{  and  }\quad h_{j}(\la):= \la-\sigma_j,
\end{equation}
for $i=1,\ldots , N$, and $j=0,\ldots , N-1.$ Then, the rational functions $b_{i}(\la)$ satisfy the simple recursion
\begin{equation*}\label{eq_recur}
g_{j+1}( \la)\,b_{j+1}(\lambda) =h_j(\la)\,b_{j}(\lambda),\quad j=0,1,\ldots, N-1 \, ,
\end{equation*}
which will be useful in the sequel. Note that the rational functions $b_{i}(\la)$ could not be proper, since for any infinite pole $\xi_i = \infty$ the corresponding factor $1- \la /\xi_{i}$ is just equal to $1$, and, therefore, $b_{i}(\la)$ has a nonconstant polynomial part.

With all this information, we are in the position of introducing the first family of rational matrices considered in \cite{nlep}, whose elements are defined as
\begin{equation} \label{eq.QN}
Q_{N}(\lambda)=b_{0}(\lambda)D_{0}+b_{1}(\lambda)D_{1}+\cdots + b_{N}(\lambda)D_{N} \in \FF (\la)^{m\times m},
\end{equation}
where $D_0, \ldots , D_N \in \FF^{m \times m}$ are constant matrices.

In this section, the nodes $(\sigma_{0},\ldots, \allowbreak \sigma_{N-1})$, the poles $(\xi_{1},\ldots,\xi_{N})$, the scaling parameters $(\beta_{0},\ldots,\beta_N)$ and the matrices $D_0, \allowbreak \ldots , D_N$ are arbitrary parameters that allow us to define the considered family of rational matrices. However, in \cite{nlep} these parameters are carefully chosen in such a way that $Q_N (\la)$ approximates satisfactorily the matrix defining the NLEP to be solved in the target set $\Sigma \subset \FF$ containing the desired eigenvalues of the NLEP. In this scenario, it is important to stress that the poles $(\xi_{1},\ldots,\xi_{N})$ are always chosen outside the region of interest $\Sigma$ \cite[p. A2852]{nlep}, which implies that all the zeros of $Q_N (\la)$ located in $\Sigma$ are eigenvalues of $Q_N (\la)$. Thus, the REP associated with $Q_N (\la)$ is an explicit example of a problem with a property that has been mentioned before in this paper, i.e., the poles are known and located outside the region of interest and, then, it is not needed to compute them. Note, however, the following subtlety: though it is clear that the finite poles of $Q_N (\la)$ are included in the list $(\xi_{1},\ldots,\xi_{N})$, it is easy to construct examples of matrices as in \eqref{eq.QN} for which some of the finite numbers in $(\xi_{1},\ldots,\xi_{N})$ are not poles due to some cancellations. Thus, all the finite numbers in $(\xi_{1},\ldots,\xi_{N})$ are not necessarily finite poles of $Q_N (\la)$ and, even more, the partial multiplicities of such poles are not immediately visible from \eqref{eq.QN}. Despite these comments, we will call the numbers $(\xi_{1},\ldots,\xi_{N})$ poles, following the usage in \cite{nlep}.

In order to solve the REP $Q_{N}(\lambda) \, y=0$, the authors of \cite{nlep} solve the generalized eigenvalue problem corresponding to the pencil
\begin{equation} \label{eq.LN}
L_{N}(\lambda)=\left[\begin{array}{c}
M_{N}(\lambda)\\
K_{N}(\lambda)
\end{array}\right],
\end{equation}
where
$$
M_{N}(\lambda) :=\left[\begin{array}{ccccc}
\dfrac{g_{N}(\la)}{\beta_{N}}	D_{0}  & \dfrac{g_{N}(\la)}{\beta_{N}} D_{1} & \cdots &\dfrac{g_{N}(\la)}{\beta_{N}}D_{N-2} & \dfrac{g_{N}(\la)}{\beta_{N}}D_{N-1}+\dfrac{h_{N-1}(\la)}{\beta_{N}}D_{N}
\end{array}\right],$$ and
$$ K_{N}(\la) :=\left[\begin{array}{cccccc}
-h_{0}(\lambda) & g_{1}(\la)  \\
& -h_{1}(\lambda) & g_{2}(\la)    \\
& & \ddots  & \ddots  \\
&  & &-h_{N-2}(\lambda) & g_{N-1}(\la)
\end{array}\right] \otimes I_{m}.$$
In \cite{nlep} the use of $L_N (\la)$ for solving the REP associated to $Q_N (\la)$ is supported by \cite[Theorem 3.2]{nlep}, which states that $L_{N}(\lambda)$ is a strong linearization of the rational matrix $Q_{N}(\lambda)$ without specifying the exact meaning of ``strong linearization'' in this rational context. Moreover, the proof of \cite[Theorem 3.2]{nlep} consists of a reference to \cite[Theorem 3.1]{amiretal-09}, which is a paper dealing with strong linearizations of {\em polynomial} matrices in the classical sense of \cite{GoLaRo82}. However, as a consequence of the results in Section \ref{sec-full-rank-pencils}, it is very easy to prove that $L_N(\la)$ is always a linearization of $Q_N (\la)$ in a set including the region of interest in \cite{nlep}, as well as at infinity. This is proved in Theorem \ref{th.NLEIGSbasic1}, where the nomenclature introduced in Remark \ref{rem.blockfullassoc} is used.

\begin{theo} \label{th.NLEIGSbasic1}
	Let $Q_N (\la)$ be the rational matrix in \eqref{eq.QN} and $L_N(\la)$ be the pencil in \eqref{eq.LN}. Let $\mathcal{P}_N$ and $i_N$ be, respectively, the set of finite poles and the number of infinite poles in the list $(\xi_1, \xi_2, \ldots , \xi_N)$. Then, the following statements hold:
	\begin{enumerate}
		\item[\rm (a)] $L_N (\la)$ partitioned as in \eqref{eq.LN} is a block full rank pencil with only one block column associated with $Q_N(\la)$ in $\FF \setminus \mathcal{P}_N$.
		\item[\rm (b)] $L_N (\la)$ is a linearization with empty state matrix of $Q_N(\la)$ in $\FF \setminus \mathcal{P}_N$.
		\item[\rm (c)] $L_N (\la)$ is a linearization with empty state matrix of $Q_N(\la)$ at $\infty$ of grade $i_N$.
	\end{enumerate}
\end{theo}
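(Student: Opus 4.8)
The plan is to recognize $L_N(\la)$, partitioned as in \eqref{eq.LN}, as an instance of the block full rank pencil of Corollary \ref{cor_locallin} with $M(\la)=M_N(\la)$ and $K_1(\la)=K_N(\la)$, and then to exhibit an explicit dual rational basis of $K_N(\la)$. I would propose
\[
N_1(\la):=\frac{\beta_N}{g_N(\la)}\left[\,b_0(\la)I_m\ \ b_1(\la)I_m\ \ \cdots\ \ b_{N-1}(\la)I_m\,\right]\in\FF(\la)^{m\times Nm},
\]
and verify the two algebraic identities on which everything rests. The identity $K_N(\la)N_1(\la)^T=0$ follows block row by block row from the recursion $g_{j+1}(\la)b_{j+1}(\la)=h_j(\la)b_j(\la)$, $j=0,\ldots,N-1$, after cancelling the common scalar factor $\beta_N/g_N(\la)$. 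The identity $M_N(\la)N_1(\la)^T=Q_N(\la)$ follows by expanding the product: the factors $g_N(\la)/\beta_N$ in the first $N-1$ blocks cancel against $\beta_N/g_N(\la)$, producing $\sum_{j=0}^{N-1}b_j(\la)D_j$, and in the last block the extra term $\frac{h_{N-1}(\la)}{\beta_N}D_N$ contributes $\frac{h_{N-1}(\la)}{g_N(\la)}b_{N-1}(\la)D_N=b_N(\la)D_N$, the equality $\frac{h_{N-1}(\la)}{g_N(\la)}b_{N-1}(\la)=b_N(\la)$ being immediate from \eqref{eq_interpolant}. Since the first block of $N_1(\la)$ is $\frac{\beta_N b_0(\la)}{g_N(\la)}I_m$, a nonzero scalar rational function times $I_m$, the matrix $N_1(\la)$ has full row normal rank; and $K_N(\la)$ has full row normal rank because its trailing $N-1$ block columns form a block upper triangular pencil with diagonal blocks $g_1(\la)I_m,\ldots,g_{N-1}(\la)I_m$, which are generically nonsingular. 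Hence $K_N(\la)$ and $N_1(\la)$ are dual rational bases, $\deg L_N(\la)=1$ (the entry $-h_0(\la)$ has degree $1$), and $L_N(\la)$ has exactly the structure required by Corollary \ref{cor_locallin}.

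For parts (a) and (b) I would apply Corollary \ref{cor_locallin}(a) with $\Omega=\FF\setminus\mathcal{P}_N$, so it only remains to check that $K_N(\la)$ and $N_1(\la)$ have full row rank in $\FF\setminus\mathcal{P}_N$. For $\la_0\notin\mathcal{P}_N$ every $g_i(\la_0)=\beta_i(1-\la_0/\xi_i)$ is nonzero (including the case $\xi_i=\infty$, where $g_i\equiv\beta_i$), so the trailing $N-1$ block columns of $K_N(\la_0)$ are block upper triangular with nonsingular diagonal and $K_N(\la_0)$ has full row rank $(N-1)m$. Likewise $N_1(\la)$ has no poles outside $\mathcal{P}_N$ — all its poles come from the $b_j(\la)$ and the factor $1/g_N(\la)$ — and its first block $\frac{\beta_N b_0(\la_0)}{g_N(\la_0)}I_m=\frac{\beta_N}{\beta_0 g_N(\la_0)}I_m$ is invertible, so $N_1(\la_0)$ has full row rank $m$. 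Then Theorem \ref{th:1blockfullrank}, Remark \ref{rem.blockfullassoc} and Corollary \ref{cor_locallin}(a) give that $L_N(\la)$ is a block full rank pencil with one block column associated with $Q_N(\la)=M_N(\la)N_1(\la)^T$ in $\FF\setminus\mathcal{P}_N$, and a linearization with empty state matrix of $Q_N(\la)$ in $\FF\setminus\mathcal{P}_N$, i.e., statements (a) and (b).

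For part (c) I would apply Corollary \ref{cor_locallin}(b). First, $\rev_1 K_N(\la)=\la K_N(1/\la)$ has full row rank at $0$: the reversed diagonal entries $\rev_1(-h_j)(\la)=-(1-\sigma_j\la)$ take the value $-1$ at $\la=0$, so the leading $N-1$ block columns of $\rev_1 K_N(0)$ are block upper triangular with diagonal $-I_m$. Second, I must find an integer $t$ with $\rev_t N_1(\la)$ of full row rank at $0$; I claim $t=i_N-1$ works and yields the grade $1+t=i_N$. Let $d_j$ denote the number of infinite poles among $\xi_1,\ldots,\xi_j$ and set $e_N:=1$ if $\xi_N$ is finite and $e_N:=0$ if $\xi_N=\infty$. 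A direct computation with \eqref{eq_interpolant}--\eqref{scalar_functions} shows that $b_j(\la)$ behaves like a nonzero constant times $\la^{d_j}$ as $\la\to\infty$ (the numerator has degree $j$ and the denominator degree $j-d_j$), while $\beta_N/g_N(\la)$ behaves like a nonzero constant times $\la^{-e_N}$; hence the $j$-th block of $\rev_t N_1(\la)=\la^t N_1(1/\la)$ behaves like a nonzero constant times $\la^{\,t-d_j+e_N}I_m$ near $\la=0$. Choosing $t:=d_{N-1}-e_N$ makes every exponent $t-d_j+e_N=d_{N-1}-d_j\ge 0$ (so $\rev_t N_1(\la)$ is defined at $0$) while block $j=N-1$ reduces to a nonzero constant times $I_m$ (so $\rev_t N_1(0)$ has full row rank $m$). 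Finally $d_{N-1}=i_N$ when $\xi_N$ is finite and $d_{N-1}=i_N-1$ when $\xi_N=\infty$, which in either case gives $t=d_{N-1}-e_N=i_N-1$, so Corollary \ref{cor_locallin}(b) yields that $L_N(\la)$ is a linearization with empty state matrix of $Q_N(\la)$ at $\infty$ of grade $1+t=i_N$.

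I expect the only genuinely delicate point to be the bookkeeping in part (c): tracking how the finite and infinite poles contribute to the order of $\rev_t N_1(\la)$ at $0$, choosing $t$ so that no negative powers of $\la$ survive while some block stays invertible, and checking that the two cases $\xi_N$ finite / $\xi_N$ infinite collapse to the single answer $i_N$. Everything else — the two algebraic identities, the duality of $K_N(\la)$ and $N_1(\la)$, and the full-row-rank verifications on $\FF\setminus\mathcal{P}_N$ — is a routine application of the machinery of Section \ref{sec-full-rank-pencils} together with the explicit formulas \eqref{eq_interpolant}--\eqref{eq.QN}.
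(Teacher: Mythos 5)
Your proposal is correct and follows essentially the same route as the paper: your dual rational basis $N_1(\la)=\frac{\beta_N}{g_N(\la)}\left[\,b_0(\la)\ \cdots\ b_{N-1}(\la)\,\right]\otimes I_m$ is exactly the paper's $N_N(\la)=\frac{1}{1-\la/\xi_N}\left[\,b_0(\la)\ \cdots\ b_{N-1}(\la)\,\right]\otimes I_m$ since $g_N(\la)=\beta_N(1-\la/\xi_N)$, and parts (a)--(c) are obtained in both cases from Theorems \ref{th:1blockfullrank} and \ref{th:2blockfullrank} (via Corollary \ref{cor_locallin}) with the same choice $t=i_N-1$, your exponent bookkeeping with $d_j$ and $e_N$ being a slightly more explicit version of the paper's analysis of $\la^{i_N-1}N_N(1/\la)$. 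The only cosmetic slip is calling the trailing $N-1$ block columns of $K_N(\la)$ upper rather than lower bidiagonal, which does not affect the nonsingularity argument.
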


\begin{proof}
	It is immediate to check that
	\begin{equation}\label{eq.dualrational1}
	N_N (\la) := \frac{1}{1-\frac{\la}{\xi_N}} \, \left[
	\begin{array}{cccc}
	b_0(\la) & b_1(\la) & \cdots & b_{N-1} (\la)
	\end{array}
	\right] \otimes I_m
	\end{equation}
	is a rational basis dual to $K_N (\la)$. Note also that $K_N (\la)$ and $N_N (\la)$ have both full row rank in $\FF\setminus \mathcal{P}_N$. In addition, an easy direct computation proves $M_N (\la) N_N(\la)^T = Q_N (\la)$. Thus, parts (a) and (b) follow from Theorem \ref{th:1blockfullrank}. Observe that parts (a) and (b) can also be proved from Corollary \ref{cor_locallin}, since the structures of $Q_N (\la)$, $L_N (\la)$ and $N_N (\la)$ are particular cases of those described in that corollary.
	
	In order to prove part (c), note first that $\mbox{rev}_1 \, K_N (\la)$ has full row rank at zero. We now consider the rational matrix $\mbox{rev}_{i_N - 1} \, N_N (\la)=\la^{i_N - 1}  N_N\left( \frac{1}{\la} \right) $, which is of the form
	$$\la^{i_{N }-1}  N_N \left( \frac{1}{\la} \right) =  \left[
	\begin{array}{ccccccc} \displaystyle
	* &  \cdots &
	* &   \frac{\la}{\la-1/\xi_N}\la^{i_{N}-1}b_{N-1}\left( \frac{1}{\la} \right) I_m
	\end{array}
	\right],$$ where the entries $*$ are defined at $0.$ Denote by $i_{N-1}$ the number of infinite poles in the list $(\xi_1, \xi_2, \ldots , \xi_{N-1})$. Then, $b_{N-1}\left( \frac{1}{\la} \right)=\frac{1}{\la^{i_{N-1}}}c(\la),$ for a certain rational function $c(\la)$ with $c(0)\neq 0.$ Thus, we obtain that $\mbox{rev}_{i_N - 1} \, N_N (\la)$ has full row rank at $0,$ taking into account that $i_{N-1}=i_{N}$ if $\xi_{N}\neq \infty,$ and $i_{N-1}=i_{N}-1$ if $\xi_{N}=\infty.$ Then, part (c) follows from Theorem \ref{th:2blockfullrank}.
\end{proof}

Combining Theorems \ref{th.NLEIGSbasic1} and \ref{theo:spectral}, we get that $L_N(\la)$ contains all the information about the finite eigenvalues of $Q_N (\la)$ in $\FF \setminus \mathcal{P}_N$, including all type of multiplicities (algebraic, geometric and partial). Moreover, Proposition \ref{prop:invariantorders} allows us to recover the complete pole-zero structure of $Q_N (\la)$ at $\infty$ from the eigenvalue structure at $0$ of $\rev L_N (\la)$, just by noting that, in this case, $t=0$ in Proposition \ref{prop:invariantorders} since we are taking an empty state matrix. We stress that all these results hold for {\em any} rational matrix $Q_N (\la)$ either regular or singular. However, no information is provided on the finite poles of $Q_N (\la),$ and some of them could also be zeros.  As explained above, this is not an issue in \cite{nlep}, since $\mathcal{P}_N$ is outside the target set $\Sigma$. Nevertheless, at the cost of imposing extra hypotheses, we will solve this problem in Section \ref{sec.NLEIGSbasic2} for completeness and also because it is of interest for the theory of REPs.

\begin{rem} {\rm Let $d_N(\la)$ and $d_{N-1}(\la)$ be the denominators of $b_N(\la)$ and $b_{N-1}(\la)$ in \eqref{eq_interpolant}, respectively. Then, under the hypothesis $\xi_i \ne \sigma_j$, $1 \leq i \le N$, $0 \leq j \le N-1$, $L_N (\la)$ is a strong block minimal bases pencil (recall Definition \ref{def:minlinearizations}) associated with the {\em polynomial matrix} $d_N (\la) Q_N(\la)$. This follows easily from the facts that $K_N (\la)$ in \eqref{eq.LN} is a minimal basis with all its row degrees equal to one, that $$\widehat{N}_N (\la) := \beta_N \, d_{N-1} (\la) \, \left[
		\begin{array}{cccc}
		b_0(\la) & b_1(\la) & \cdots & b_{N-1} (\la)
		\end{array}
		\right] \otimes I_m$$ is a minimal basis dual to $K_N (\la)$ with all its row degrees equal to $N-1$, and that $M_N (\la) \widehat{N}_N (\la)^T = d_N (\la) Q_N(\la)$. Thus, using the results stated in the paragraph after Definition \ref{def:minlinearizations}, we get that $L_N(\la)$ is a $N$-strong linearization of the polynomial matrix $d_N (\la) Q_N(\la)$ with empty state matrix. Since $d_N (\la) Q_N(\la)$ and $Q_N(\la)$ are equivalent in $\FF \setminus \mathcal{P}_N$, we obtain again the result in Theorem \ref{th.NLEIGSbasic1}(b) through a different path which requires to use extra hypotheses.
	}
\end{rem}

\subsection{The NLEIGS low rank problem from the point of view of block full rank pencils} \label{sec.NLEIGSlowrank1} The second family of rational matrices considered in \cite{nlep} comes from approximating NLEPs, $A(\la) x = 0$, such that the associated matrix $A(\la)$ is the sum of a polynomial matrix plus a matrix of the form $\sum_{i=1}^{n}C_{i}f_{i}(\la)$, where the constant matrices $C_i$ have much smaller rank than the size of $A(\la)$ and $f_i (\la)$ are scalar nonlinear functions of $\la$. This type of NLEPs arise in several applications \cite{guttel-tisseur-2017} and are approximated in \cite[eq. (6.2)]{nlep} by a family of rational matrices of the form
\begin{equation}\label{eq.QNlowrank}
\widetilde{Q}_{N}(\la) =\displaystyle\sum_{i=0}^{p} b_{i}(\la) \, \widetilde{D}_{i} + \displaystyle\sum_{i=p+1}^{N} b_{i}(\la) \, \widetilde{L}_{i} \, \widetilde{U}^T \in \FF (\la)^{m\times m},
\end{equation}
where $b_0 (\la) , \ldots, b_N (\la)$ are the scalar rational functions in \eqref{eq_interpolant}, $\widetilde{D}_{0}, \ldots , \widetilde{D}_{p} \in \FF^{m \times m}$, $\widetilde{L}_{p+1} , \ldots , \widetilde{L}_{N} \in \FF^{m \times r}$ and $\widetilde{U} \in \FF^{m \times r}$ are constant matrices, and $r \ll m$. For the functions in \eqref{scalar_functions}, let us consider the simpler notation $h_{i}:=h_{i}(\la)$ and $g_{i}:=g_{i}(\la)$. Then, in order to solve the REP $\widetilde{Q}_{N}(\la) y = 0$ efficiently by taking advantage of the low rank structure of $Q_N (\la),$ the following pencil is introduced in \cite[Sec. 6.4]{nlep}:
\begin{equation} \label{eq.LNlowrank}
\widetilde{L}_{N}(\lambda)=\left[\begin{array}{c}
\widetilde{M}_{N}(\lambda)\\
\widetilde{K}_{N}(\lambda)
\end{array}\right],
\end{equation}
where
\begin{align*}
\widetilde{M}_{N}(\lambda) & =  \left[\begin{array}{cccccccc} \frac{g_N}{\beta_N} \widetilde{D}_{0}   & \frac{g_N}{\beta_N} \widetilde{D}_{1}   & \cdots &\frac{g_N}{\beta_N}\widetilde{D}_{p}  & \frac{g_N}{\beta_N} \widetilde{L}_{p+1} & \cdots & \frac{g_N}{\beta_N}\widetilde{L}_{N-2} & \frac{g_N}{\beta_N}\widetilde{L}_{N-1}+\frac{h_{N-1}}{\beta_{N}}\widetilde{L}_{N} \end{array}\right] \, ,
\end{align*}
and
\begin{equation*}
\widetilde{K}_{N}(\lambda)  = \left[\begin{array}{ccccccccc}
-h_0I_{m} & g_{1}I_{m}  \\
	&  \ddots  & \ddots  \\
	& &-h_{p-1}I_{m} & g_{p}I_{m}
	\\
 & & & -h_{p}\widetilde{U}^{T} &
	g_{p+1}I_{r}  \\
 & & & &	-h_{p+1}I_{r} & g_{p+2}I_{r} \\
 & & & & &	\ddots  & \ddots  \\
	& & & & & &-h_{N-2}I_{r} & g_{N-1}I_{r} \end{array}\right].
\end{equation*}	
A result analogous to Theorem \ref{th.NLEIGSbasic1} can be proved for the pencil $\widetilde{L}_N(\la)$ and the matrix $\widetilde{Q}_N (\la)$. This is accomplished in Theorem \ref{th.NLEIGSlowrank1}. We remark, nevertheless, that the result concerning the linearizations at $\infty$ is weaker in Theorem \ref{th.NLEIGSlowrank1} than in Theorem \ref{th.NLEIGSbasic1}. This is an unavoidable consequence of the used approach and the low rank structure of $\widetilde{Q}_N (\la)$.

\begin{theo} \label{th.NLEIGSlowrank1}
	Let $\widetilde{Q}_N (\la)$ be the rational matrix in \eqref{eq.QNlowrank} and $\widetilde{L}_N(\la)$ be the pencil in \eqref{eq.LNlowrank}. Let $\mathcal{P}_N$ and $i_N$ be, respectively, the set of finite poles and the number of infinite poles in the list $(\xi_1, \xi_2, \ldots , \xi_N)$. Then, the following statements hold:
	\begin{enumerate}
		\item[\rm (a)] $\widetilde{L}_N (\la)$ partitioned as in \eqref{eq.LNlowrank} is a block full rank pencil with only one block column associated with $\widetilde{Q}_N(\la)$ in $\FF \setminus \mathcal{P}_N$.
		\item[\rm (b)] $\widetilde{L}_N (\la)$ is a linearization with empty state matrix of $\widetilde{Q}_N(\la)$ in $\FF \setminus \mathcal{P}_N$.
		\item[\rm (c)] If, in addition, the poles $\xi_{p+1}, \xi_{p+2}, \ldots , \xi_{N-1}$ are all finite, then $\widetilde{L}_N (\la)$ is a linearization with empty state matrix of $\widetilde{Q}_N(\la)$ at $\infty$ of grade $i_N$.
	\end{enumerate}
\end{theo}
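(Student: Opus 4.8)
The plan is to mirror the proof of Theorem \ref{th.NLEIGSbasic1} and reduce everything to the block full rank pencil machinery of Section \ref{sec-full-rank-pencils}, i.e. to Theorems \ref{th:1blockfullrank} and \ref{th:2blockfullrank} (equivalently, Corollary \ref{cor_locallin}). The first task is to exhibit a rational basis dual to $\widetilde{K}_N(\la)$. Guided by the recursion $g_{j+1}b_{j+1}=h_jb_j$ and by the fact that the block sizes of $\widetilde{K}_N(\la)$ drop from $m$ to $r$ at the $(p+1)$-st block row, I would propose
\[
\widetilde{N}_N(\la):=\frac{1}{1-\la/\xi_N}\left[\, b_0(\la)I_m \;\;\cdots\;\; b_p(\la)I_m \;\; b_{p+1}(\la)\widetilde{U} \;\;\cdots\;\; b_{N-1}(\la)\widetilde{U}\,\right],
\]
and check $\widetilde{K}_N(\la)\widetilde{N}_N(\la)^T=0$ blockwise: the rows built from the scalars $h_i,g_{i+1}$ give contributions $(-h_ib_i+g_{i+1}b_{i+1})\cdot(\text{block})=0$, and the transitional row $[-h_p\widetilde{U}^T\;\; g_{p+1}I_r]$ gives $(-h_pb_p+g_{p+1}b_{p+1})\widetilde{U}^T=0$. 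Since $b_0=1/\beta_0\neq 0$, the leading $m\times m$ block of $\widetilde{N}_N(\la)$ is nonsingular, so $\widetilde{N}_N(\la)$ has full row normal rank $m$, and the staircase structure of $\widetilde{K}_N(\la)$ (delete its first block column to get a square block‑bidiagonal pencil with diagonal blocks $g_1I_m,\dots,g_pI_m,g_{p+1}I_r,\dots,g_{N-1}I_r$) shows $\widetilde{K}_N(\la)$ has full row normal rank. I would then verify $\widetilde{M}_N(\la)\widetilde{N}_N(\la)^T=\widetilde{Q}_N(\la)$ by the same blockwise computation, using $g_N/\beta_N=1-\la/\xi_N$ to cancel the prefactor of $\widetilde{N}_N$ and $h_{N-1}b_{N-1}/g_N=b_N$ to absorb the last block. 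Together with $\deg(\widetilde{L}_N(\la))=1$ (the entry $-h_0I_m$ already has degree exactly $1$), this shows $\widetilde{L}_N(\la)$ partitioned as in \eqref{eq.LNlowrank} is a block full rank pencil with one block column.

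For parts (a) and (b) it remains to check that $\widetilde{K}_N(\la)$ and $\widetilde{N}_N(\la)$ both have full row rank in $\FF\setminus\mathcal{P}_N$, and then invoke Theorem \ref{th:1blockfullrank}. The deleted‑first‑column square pencil above has determinant $\pm\prod_{i=1}^{N-1}g_i(\la)^{(\cdot)}$, which is nonzero at every $\la_0$ different from $\xi_1,\dots,\xi_{N-1}$, hence at every $\la_0\notin\mathcal{P}_N$; so $\widetilde{K}_N(\la_0)$ has full row rank there. For $\widetilde{N}_N(\la)$, the functions $b_i(\la)$ and $1/(1-\la/\xi_N)$ have poles only inside $\mathcal{P}_N$, so $\widetilde{N}_N(\la)$ is defined on $\FF\setminus\mathcal{P}_N$, and its leading block $\frac{b_0(\la_0)}{1-\la_0/\xi_N}I_m$ is nonsingular whenever $\la_0\neq\xi_N$; hence it has full row rank on $\FF\setminus\mathcal{P}_N$. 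Theorem \ref{th:1blockfullrank} (or Corollary \ref{cor_locallin}(a)) then yields (a) and (b).

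For part (c) I would pass to reversals and apply Theorem \ref{th:2blockfullrank} (or Corollary \ref{cor_locallin}(b)), for which two conditions at $\la=0$ must be verified: $\rev_1\widetilde{K}_N(\la)$ has full row rank at $0$, and $\rev_t\widetilde{N}_N(\la)$ has full row rank at $0$ for a suitable integer $t$, the resulting grade being $1+t$. Using $\rev_1 g_i(0)=-\beta_i/\xi_i$ when $\xi_i$ is finite and $\rev_1 g_i(0)=0$ when $\xi_i=\infty$, and propagating a left null vector of $\rev_1\widetilde{K}_N(0)$ through the staircase, one sees that the vector is forced to vanish precisely when $\rev_1 g_i(0)\neq 0$ for $i=p+1,\dots,N-1$, i.e. when $\xi_{p+1},\dots,\xi_{N-1}$ are all finite — which is the extra hypothesis of (c); under it the left‑null‑vector argument closes. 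For $\widetilde{N}_N$, writing $b_k(1/\la)=\la^{-i_k}c_k(\la)$ with $c_k(0)\neq 0$ (where $i_k$ is the number of infinite poles among $\xi_1,\dots,\xi_k$) and $\tfrac{1}{1-1/(\la\xi_N)}=\la^{\,i_{N-1}-i_N}u(\la)$ with $u(0)\neq 0$, a short bookkeeping shows that the $k$-th block of $\rev_{i_N-1}\widetilde{N}_N(\la)$ equals $\la^{\,i_{N-1}-i_k}\cdot(\text{unit at }0)\cdot(\text{block})$; under the hypothesis $\xi_{p+1},\dots,\xi_{N-1}$ finite we get $i_p=i_{p+1}=\cdots=i_{N-1}$, so the $p$-th block becomes a nonsingular $m\times m$ matrix at $0$ while all other blocks are defined at $0$, giving full row rank at $0$ with $t=i_N-1$. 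Theorem \ref{th:2blockfullrank} then produces a linearization at $\infty$ of grade $1+(i_N-1)=i_N$.

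The step I expect to be the main obstacle is part (c), and more precisely pinning down the rôle of the extra hypothesis: both reversal conditions of Theorem \ref{th:2blockfullrank} silently fail without it (the staircase $\rev_1\widetilde{K}_N(0)$ loses full row rank, and the leading $m\times m$ block of $\rev_{i_N-1}\widetilde{N}_N(0)$ picks up a positive power of $\la$), because the low‑rank transition block $-h_p\widetilde{U}^T$ prevents one from recovering rank through $\widetilde{U}$ unless $\widetilde{U}$ has full column rank, which is not assumed; this is exactly why the infinite‑structure conclusion here is weaker than in Theorem \ref{th.NLEIGSbasic1}. Everything else is a routine block computation parallel to the proof of Theorem \ref{th.NLEIGSbasic1}, and the only care needed is the bookkeeping with the exponents $i_k$.
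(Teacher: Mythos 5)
Your proposal is correct and follows essentially the same route as the paper: the same dual rational basis $\widetilde{N}_N(\la)$, reduction of (a)–(b) to Theorem \ref{th:1blockfullrank}, and for (c) the same reversal bookkeeping with the exponents $i_k$ leading to $t=i_N-1$ and grade $i_N$ via Theorem \ref{th:2blockfullrank}. Only a minor slip: the prefactor satisfies $\frac{1}{1-1/(\la\xi_N)}=\la^{\,1+i_{N-1}-i_N}u(\la)$ with $u(0)\neq 0$ (power $\la^1$ if $\xi_N$ is finite, $\la^0$ if $\xi_N=\infty$), not $\la^{\,i_{N-1}-i_N}u(\la)$; with this correction your stated per-block exponent $\la^{\,i_{N-1}-i_k}$ and the ensuing conclusion are exactly right.
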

\begin{proof} The proof is similar to that of Theorem \ref{th.NLEIGSbasic1} with some differences coming from the presence of the low rank term in $\widetilde{Q}_N (\la)$. It is immediate to check that
	\begin{equation}\label{eq.dualrational2}
	\widetilde{N}_N (\la) = \frac{1}{1-\frac{\la}{\xi_N}} \, \left[
	\begin{array}{cccccc}
	b_0(\la) I_m & \cdots & b_{p} (\la) I_m&  b_{p+1} (\la) \widetilde{U} & \cdots &  b_{N-1} (\la)\widetilde{U}
	\end{array}
	\right]
	\end{equation}
	is a rational basis dual to $\widetilde{K}_N (\la)$, that $\widetilde{K}_N (\la)$ and $\widetilde{N}_N (\la)$ have both full row rank in $\FF\setminus \mathcal{P}_N$ and that $\widetilde{M}_N (\la) \widetilde{N}_N(\la)^T = \widetilde{Q}_N (\la)$. Thus, parts (a) and (b) follow from Theorem \ref{th:1blockfullrank}.
	
	In order to prove part (c), note first that $\mbox{rev}_1 \, \widetilde{K}_N (\la)$ has full row rank at zero as a consequence of the fact that the poles $\xi_{p+1}, \xi_{p+2}, \ldots , \xi_{N-1}$ are all finite. We now consider the rational matrix $\mbox{rev}_{i_N - 1} \, \widetilde{N}_N (\la)=\la^{i_N - 1}  \widetilde{N}_N\left( \frac{1}{\la} \right) $, which is of the form
	$$\la^{i_{N }-1}  \widetilde{N}_N \left( \frac{1}{\la} \right) =  \left[
	\begin{array}{ccccccc} \displaystyle
	* &  \cdots &
	* &   \frac{\la}{\la-1/\xi_N}\la^{i_{N}-1}b_{p}\left( \frac{1}{\la} \right) I_m &	* &  \cdots &
	*
	\end{array}
	\right],$$
	 where the entries $*$ are defined at $0.$ Denote by $i_p$ the number of infinite poles in the list $(\xi_1, \xi_2, \ldots , \xi_p)$. Then, $b_{p}\left( \frac{1}{\la} \right)=\frac{1}{\la^{i_{p}}}\tilde c(\la)$ for a certain rational function $\tilde c(\la)$ with $\tilde c(0)\neq 0.$ Taking into account that the poles $\xi_{p+1}, \xi_{p+2}, \ldots , \xi_{N-1}$ are all finite, we have that $i_{p}=i_{N}$ if $\xi_{N}\neq \infty,$ and $i_{p}=i_{N}-1$ if $\xi_{N}=\infty.$ Therefore, $\mbox{rev}_{i_N - 1} \, \widetilde{N}_N (\la)$ has full row rank at $0$ because $\tilde c(0) \ne 0$. Thus, part (c) follows from Theorem \ref{th:2blockfullrank}. \end{proof}

A discussion similar to the one in the last paragraph of Section \ref{sec.NLEIGSbasic1} can be developed on the basis of Theorem \ref{th.NLEIGSlowrank1}. The details are omitted for brevity. The open problem corresponding to the information of the finite poles will be solved in Section \ref{sec.NLEIGSlowrank2}.

\subsection{The NLEIGS basic problem from the point of view of polynomial system matrices} \label{sec.NLEIGSbasic2} As discussed previously, the approach presented in Section \ref{sec.NLEIGSbasic1} to the NLEIGS pencil $L_N (\la)$ in \eqref{eq.LN} considers $L_N (\la)$ as a linearization with empty state matrix and, thus, it does not provide any information on the finite poles of $Q_N (\la)$. In order to get this information, we need to identify a convenient square regular submatrix $A_N (\la)$ of $L_N(\la)$ that may be used as a state matrix. The block structure of $L_N (\la)$ makes it not possible to find such a matrix $A_N (\la)$ in a way that it includes the information of all the potential poles $(\xi_1 ,\ldots , \xi_N)$. This is related with the comment included in \cite[p. A2849]{nlep} on the fact that $\xi_N$ plays a special role and that it is convenient to choose $\xi_N = \infty$. In what follows we will {\em not} assume that $\xi_N = \infty$, though the obtained results are simpler and stronger under such assumption, but we will focus on getting information on the finite poles in $(\xi_1 ,\ldots , \xi_{N-1})$. With this spirit, we consider the following partition of $L_N (\la)$ in \eqref{eq.LN}, where $A_N(\la)$ will play the role of the state matrix,
\begin{equation}\label{eq.LN2part} L_N (\la) =:
\left[\begin{array}{c|c}
D_N(\la) & -C_N (\la) \phantom{\Big|} \\  \hline \phantom{\Big|}
B_N(\la) & A_N (\la)
\end{array}\right], \quad \mbox{where $D_N (\la) = \left(1-\frac{\la}{\xi_{N}}\right) \, D_0$,}
\end{equation}
and the rest of the blocks are easily described from the blocks in \eqref{eq.LN} as follows: $B_N (\la)$ is the first block column of $K_N (\la)$, $-C_N (\la)$ is obtained by removing the first block of $M_N (\la)$ and $A_N (\la)$ is obtained by removing the first block column of $K_N (\la)$.

The next technical lemma reveals which is the transfer function matrix of $L_N (\la)$, with the partition above, and establishes necessary and sufficient conditions for $L_N (\la)$ to be minimal in the whole field $\FF$. Of course, the conditions in Lemma \ref{lemm.NLEIGSpoly1}(b) come from imposing that $\left[ \begin{array}{cc} B_N(\la_0) & A_N (\la_0) \end{array} \right] \in \FF^{m (N-1) \times m N}$ and
$\left[ \begin{array}{cc} -C_N(\la_0)^T & A_N (\la_0)^T \end{array} \right]^T \in \FF^{m N \times m (N-1)}$ have, respectively, full row and column rank for any $\la_0 \in \FF$, but have an important advantage with respect to these direct conditions for minimality. More precisely, the conditions in Lemma \ref{lemm.NLEIGSpoly1}(b) require to evaluate the rational matrix $R_N(\la)$ of size $m \times m$, which for practical problems is much smaller than $m (N-1) \times m N$.

\begin{lem} \label{lemm.NLEIGSpoly1} Let us consider the pencil $L_N (\la)$ in \eqref{eq.LN} as a polynomial system matrix with state matrix $A_N (\la)$, where $A_N (\la)$ is defined through the partition \eqref{eq.LN2part}, and let $Q_N (\la)$ be the rational matrix in \eqref{eq.QN}. Then the following statements hold:
	\begin{enumerate}
		\item[\rm (a)] The transfer function matrix of $L_N (\la)$ is $\beta_0 \left(1-\frac{\la}{\xi_{N}}\right) Q_N (\la).$
		\item[\rm (b)] Let us define the rational matrix $R_N (\la) :=  (Q_N (\la) - b_0 (\la) D_0)/ b_N(\la)$, whose explicit expression is
		\begin{equation}\label{eq.RN}
		R_N (\la) = \sum_{j=1}^{N-1} \, \left( \prod_{k=j+1}^{N} \frac{g_k (\la)}{ h_{k-1}(\la)}\right) \, D_j \, + \, D_N \in \FF (\la)^{m\times m} ,
		\end{equation}
		let $\mathcal{P}_{N-1}$ be the set of finite poles in the list $(\xi_1 , \xi_2 , \ldots , \xi_{N-1})$, and assume $\xi_i \ne \sigma_j$, $1\leq i \leq N$, $0\leq j \leq N-1$.
		Then, $L_N (\la)$ is minimal in $\FF$ if and only if the matrix $R_N (\xi_k)\in \FF^{m\times m}$ is nonsingular for all $\xi_k \in \mathcal{P}_{N-1}$.
	\end{enumerate}
\end{lem}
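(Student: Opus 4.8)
The plan is to verify part (a) by a direct Schur-complement computation and then, for part (b), to analyze the minimality conditions by a sequence of elementary row/column operations that reduce the large rank conditions on $[B_N(\la)\ A_N(\la)]$ and $[-C_N(\la)^T\ A_N(\la)^T]^T$ to rank conditions on the much smaller matrix $R_N(\la)$.

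For part (a), I would use the block partition \eqref{eq.LN2part} and compute $G(\la) = D_N(\la) - C_N(\la) A_N(\la)^{-1} B_N(\la)$. The key observation is that $A_N(\la)$ is (up to the removal of its first block column) the bidiagonal matrix $\widetilde K_N(\la)$ tensored trivially, so $A_N(\la)^{-1} B_N(\la)$ can be computed by back-substitution using the recursion $g_{j+1}(\la) b_{j+1}(\la) = h_j(\la) b_j(\la)$ from \eqref{eq_recur}. Concretely, since $B_N(\la) = [-h_0(\la)I_m\ 0\ \cdots\ 0]^T$, the vector $y(\la) := A_N(\la)^{-1} B_N(\la)$ should have $j$-th block proportional to $b_j(\la)/b_0(\la)$ times $I_m$, precisely matching (a scalar multiple of) the dual rational basis $N_N(\la)$ from \eqref{eq.dualrational1}. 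Plugging this into $-C_N(\la) y(\la)$ and adding $D_N(\la) = (1-\la/\xi_N) D_0$ reconstructs $\beta_0(1-\la/\xi_N)(b_0 D_0 + b_1 D_1 + \cdots + b_N D_N) = \beta_0(1-\la/\xi_N) Q_N(\la)$ after using the recursion once more to absorb the $g_N(\la)/\beta_N$ and $h_{N-1}(\la)/\beta_N$ factors in $M_N(\la)$. This is essentially the computation $M_N(\la) N_N(\la)^T = Q_N(\la)$ already invoked in the proof of Theorem \ref{th.NLEIGSbasic1}, rescaled.

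For part (b), minimality of $L_N(\la)$ in $\FF$ with state matrix $A_N(\la)$ means, by Definition \ref{def_minimalpolsysmat}, that for every $\la_0\in\FF$ the matrices $[\,B_N(\la_0)\ A_N(\la_0)\,]$ and $[\,-C_N(\la_0)^T\ A_N(\la_0)^T\,]^T$ have full rank. Note $[\,B_N(\la)\ A_N(\la)\,] = K_N(\la)$, which is row-reduced with all row degrees one and has full row rank at every $\la_0$ with $\la_0 \neq \sigma_j$ unless a pivot $g_{k}(\la_0)$ and the neighbouring $-h_{k-1}(\la_0)$ both vanish; under the hypothesis $\xi_i \neq \sigma_j$ these cannot vanish simultaneously, and in fact $K_N(\la)$ is a minimal basis, so the first rank condition holds for all $\la_0\in\FF$. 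Hence the only obstruction to minimality is the column-rank condition on $[\,-C_N(\la)^T\ A_N(\la)^T\,]^T$. I would perform block column operations on this matrix, using the bidiagonal structure of $A_N(\la)$ to successively eliminate entries: at a point $\la_0$ which is \emph{not} one of the $\xi_k$, the factors $g_k(\la_0) = \beta_k(1-\la_0/\xi_k)$ are all nonzero, every diagonal block of $A_N(\la_0)$ is invertible, and the full column rank is automatic. At $\la_0 = \xi_k$ for some $\xi_k\in\mathcal{P}_{N-1}$, the block $g_k(\la)$ vanishes; the chain of eliminations then collapses the rank deficiency onto a single $m\times m$ block, which a bookkeeping of the products $\prod g_j(\la)/h_{j-1}(\la)$ identifies (after clearing the $b_N(\la)$ denominator) with $R_N(\xi_k)$ as displayed in \eqref{eq.RN}. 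Thus the column-rank condition at $\xi_k$ is equivalent to $\det R_N(\xi_k)\neq 0$, giving the stated equivalence.

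The main obstacle I expect is the precise column-reduction bookkeeping in part (b): one must track how the telescoping products $\prod_{k=j+1}^N g_k(\la)/h_{k-1}(\la)$ emerge from repeatedly using $A_N(\la)$'s bidiagonal structure, show that evaluating at $\la_0=\xi_k$ does not accidentally create additional rank loss elsewhere (which is where the hypothesis $\xi_i\neq\sigma_j$ is essential, to keep the $h_{k-1}(\xi_k)$ nonzero so the denominators in $R_N$ are well defined), and confirm that the surviving $m\times m$ Schur-type block is exactly $R_N(\xi_k)$ up to a nonzero scalar. A cleaner alternative, which I would try first, is to derive (b) from Theorem \ref{th:Rosenlocal}: $L_N(\la)$ is minimal at $\xi_k$ iff $\xi_k$ contributes no pole elementary divisor coming from the state matrix beyond what $Q_N(\la)$ genuinely has, and since $R_N(\la) = (Q_N(\la) - b_0(\la)D_0)/b_N(\la)$ isolates the part of $Q_N$ carrying the pole at $\xi_k$, the pole at $\xi_k$ is ``real'' (not cancelled) exactly when $R_N(\xi_k)$ is nonsingular — but turning this intuition into a rigorous argument still requires controlling the multiplicities, so the explicit reduction is probably the safer route.
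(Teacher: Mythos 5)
Your plan matches the paper's proof: part (a) is done exactly as you describe, by observing that $B_N(\la)$ has a single nonzero block so only the first block column of $A_N(\la)^{-1}$ is needed (just mind the sign --- since the upper-right block of the partition is $-C_N(\la)$, the Schur complement is $D_N(\la)+C_N(\la)A_N(\la)^{-1}B_N(\la)$, not $D_N(\la)-C_N(\la)A_N(\la)^{-1}B_N(\la)$); and part (b) is established in the paper's Appendix A by precisely your strategy, namely that $[\,B_N(\la_0)\ A_N(\la_0)\,]=K_N(\la_0)$ has full row rank everywhere thanks to $\xi_i\neq\sigma_j$, that the column-rank condition on $[\,-C_N(\la)^T\ A_N(\la)^T\,]^T$ can only fail on $\mathcal{P}_{N-1}$, and that a sequence of block eliminations regular in $\mathcal{P}_{N-1}$ collapses that matrix onto a block containing $R_N(\la)$ together with blocks vanishing at every $\xi_k\in\mathcal{P}_{N-1}$. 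The telescoping bookkeeping you defer is exactly the content of that appendix, so your outline is sound.
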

\begin{proof}
	Part (a). According to \eqref{eq.LN2part}, the transfer function matrix of $L_N(\la)$ is $D_N (\la) + C_N (\la) A_N (\la)^{-1} B_N (\la)$. The computation of this transfer function is very easy because $B_N (\la) = \left[ \begin{array}{cccc}
	-h_0(\la) I_m & 0 & \cdots & 0
	\end{array}\right]^T$, which implies that only the first block column of $A_N (\la)^{-1}$ is needed. It is immediate to check that this first block column is
	\[
	\frac{1}{b_1(\la) g_1 (\la)} \left[ \begin{array}{ccc}
	b_1(\la) & \cdots & b_{N-1} (\la)
	\end{array}\right]^T \otimes I_m \, .
	\]
	The rest of the proof of part (a) is just an elementary and short algebraic manipulation.
	
	Part (b). The proof is elementary but long. Thus, it is postponed to \ref{appendix1}.
\end{proof}

We emphasize that Lemma \ref{lemm.NLEIGSpoly1}(a) holds for {\rm any} rational matrix $Q_N(\la)$ expressed as in \eqref{eq.QN} without imposing any extra condition. Moreover, the constant matrix $A_N (\la_0)$ is invertible for any $\la_0 \in \FF \setminus \mathcal{P}_{N-1}$ and, so, $L_N(\la)$ is minimal in $\FF \setminus \mathcal{P}_{N-1}$. Combining these results with the fact that $Q_N (\la)$ and $\beta_0 \left(1-\frac{\la}{\xi_{N}}\right) Q_N (\la)$ are equivalent in $\FF$ if $\xi_N = \infty$ or in $\FF\setminus \{\xi_N\}$ if $\xi_N$ is finite, we immediately obtain from Definitions \ref{def_pointstronglin} and \ref{def:linsubset} that $L_N (\la)$ is a linearization of $Q_N (\la)$ with state matrix $A_N (\la)$ in $\FF\setminus \mathcal{P}_{N}$, which is a result analogous to Theorem \ref{th.NLEIGSbasic1}(b). This approach, of course, does not give any information on the finite poles of $Q_N (\la)$, because the finite eigenvalues of $A_N (\la)$ coincide with $\mathcal{P}_{N-1}$. Such information is obtained from the next result, which is the main result of this section and is a corollary of Lemma \ref{lemm.NLEIGSpoly1}.

\begin{theo} \label{theo.NLEIGSpoly1} Let $Q_N (\la)$ be the rational matrix in \eqref{eq.QN}, $L_N(\la)$ be the pencil in \eqref{eq.LN}, $A_N (\la)$ be the submatrix of $L_N(\la)$ in \eqref{eq.LN2part}, and $R_N (\la)$ be the rational matrix in \eqref{eq.RN}. Consider $\mathcal{P}_{N-1}$ the set of finite poles in the list $(\xi_1 , \xi_2 , \ldots , \xi_{N-1})$, and assume $\xi_i \ne \sigma_j$, $1\leq i \leq N$, $0\leq j \leq N-1$. If $R_N (\xi_k)\in \FF^{m\times m}$ is nonsingular for every $\xi_k \in \mathcal{P}_{N-1}$, then $L_N (\la)$ is a linearization of $Q_N (\la)$ with state matrix $A_N (\la)$ in $\FF$, if $\xi_N = \infty$, or in $\FF\setminus \{\xi_N\}$, if $\xi_N$ is finite.
\end{theo}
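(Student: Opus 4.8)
The plan is to read the statement directly off Lemma~\ref{lemm.NLEIGSpoly1} together with the definitions of linearization at a point (Definition~\ref{def_pointstronglin}) and in a subset of $\F$ (Definition~\ref{def:linsubset}); once the lemma is available, essentially nothing remains but bookkeeping. Throughout, $L_N(\la)$ is regarded as a polynomial system matrix with state matrix $A_N(\la)$ via the partition~\eqref{eq.LN2part}.

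First I would invoke Lemma~\ref{lemm.NLEIGSpoly1}(a), which identifies the transfer function matrix of $L_N(\la)$ as $\widehat{G}(\la)=\beta_0\bigl(1-\tfrac{\la}{\xi_N}\bigr)Q_N(\la)$. Since $\widehat{G}(\la)$ and $Q_N(\la)$ are both $m\times m$, condition (b) of Definition~\ref{def_pointstronglin} is checked with $s_1=s_2=0$, so it suffices to produce rational matrices regular at $\la_0$ whose product with $Q_N(\la)$ equals $\widehat{G}(\la)$. The natural choice is $R_2(\la)=I_m$ together with $R_1(\la)=\beta_0\bigl(1-\tfrac{\la}{\xi_N}\bigr)I_m$ when $\xi_N$ is finite, and $R_1(\la)=\beta_0 I_m$ when $\xi_N=\infty$. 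Because $\beta_0\neq 0$, one has $\det R_1(\la_0)\neq 0$ exactly for $\la_0\in\F\setminus\{\xi_N\}$ in the finite case and for all $\la_0\in\F$ in the infinite case; hence~\eqref{equivalencia} holds at every point of $\F\setminus\{\xi_N\}$, respectively of $\F$.

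Next I would use the standing hypotheses $\xi_i\neq\sigma_j$ and ``$R_N(\xi_k)$ nonsingular for every $\xi_k\in\mathcal{P}_{N-1}$'' to apply Lemma~\ref{lemm.NLEIGSpoly1}(b), which yields that $L_N(\la)$ with state matrix $A_N(\la)$ is minimal in $\F$, and in particular minimal at every $\la_0\in\F\setminus\{\xi_N\}$; this is condition (a) of Definition~\ref{def_pointstronglin} at each such point. Combining the two conditions, $L_N(\la)$ is a linearization of $Q_N(\la)$ at every $\la_0$ of the stated set, and Definition~\ref{def:linsubset} upgrades this pointwise statement to a linearization in that set, which is the assertion of the theorem.

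There is no genuine obstacle beyond the observation that one cannot obtain the equivalence~\eqref{equivalencia} at $\la_0=\xi_N$ when $\xi_N$ is finite, since there $R_1(\la)=\beta_0\bigl(1-\tfrac{\la}{\xi_N}\bigr)I_m$ becomes singular (equivalently, $\widehat{G}$ differs from $Q_N$ by a scalar factor vanishing at $\xi_N$); this is precisely why the set $\F\setminus\{\xi_N\}$, rather than all of $\F$, appears in the conclusion. All the real technical weight --- the closed form~\eqref{eq.RN} of $R_N(\la)$ and the rank arguments behind the minimality criterion --- is carried by Lemma~\ref{lemm.NLEIGSpoly1} and its appendix, so the present argument only adds the elementary scalar-multiple manipulation above.
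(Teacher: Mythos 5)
Your proposal is correct and follows exactly the paper's own argument: minimality in $\F$ comes from Lemma~\ref{lemm.NLEIGSpoly1}(b) under the stated hypotheses, and the equivalence of the transfer function $\beta_0\bigl(1-\tfrac{\la}{\xi_N}\bigr)Q_N(\la)$ with $Q_N(\la)$ in $\F$ (resp.\ $\F\setminus\{\xi_N\}$) is handled via Definitions~\ref{def_pointstronglin} and~\ref{def:linsubset} with $s_1=s_2=0$. You merely make explicit the matrices $R_1(\la)$, $R_2(\la)$ that the paper leaves implicit; no gap.
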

\begin{proof}
	Under the hypotheses of Theorem \ref{theo.NLEIGSpoly1}, $L_N(\la)$ is minimal in $\FF$. Moreover, its transfer function matrix, i.e., $\beta_0 \left(1-\frac{\la}{\xi_{N}}\right) Q_N (\la)$ is equivalent to $Q_N (\la)$ in $\FF$, if $\xi_N = \infty$, or in $\FF\setminus \{\xi_N\}$, if $\xi_N$ is finite. The result follows immediately from Definitions \ref{def_pointstronglin} and \ref{def:linsubset} with $s_1 = s_2 =0$.
\end{proof}

 We emphasize that the hypotheses that the constant matrices $R_N (\xi_k)$ in Theorem \ref{theo.NLEIGSpoly1} are nonsingular are not mentioned at all in \cite{nlep}, but, fortunately, are generic, in the sense that they are satisfied by almost all regular rational matrices $Q_N (\la)$ expressed as in \eqref{eq.QN}.

\begin{rem}\rm
	Under the conditions of Theorem \ref{theo.NLEIGSpoly1}, the pole elementary divisors of $Q_N(\la)$ in $\FF$, if $\xi_N = \infty$, or in $\FF\setminus \{\xi_N\}$, if $\xi_N$ is finite, are the elementary divisors of $A_N (\la)$, as a consequence of Theorem \ref{theo:spectral}. These elementary divisors can be very easily determined as follows: first express $A_N (\la) = \widehat{A}_N (\la) \otimes I_m$; second note that if $\widehat{S}_N (\la)$ is the Smith form of $\widehat{A}_N (\la)$, then $\widehat{S}_N (\la) \otimes I_m$ is the Smith form of $A_N(\la)$; third, use the fact that $\xi_i \ne \sigma_j$, $1\leq i \leq N$, $0\leq j \leq N-1$, to prove that the greatest common divisor of all $(N-2) \times (N-2)$ minors of $\widehat{A}_N (\la)$ is equal to $1$, which implies, according to \cite[Ch. VI]{gant}, that there is only one invariant polynomial of $\widehat{S}_N (\la)$ different from $1$ and that is equal to
	$$
	p(\la) = c \, (1 - \la /\xi_1) \cdots (1 - \la /\xi_{N-1}) ,
	$$
	where $c\in \FF$ is a constant that makes $p(\la)$ monic. Finally, we get that $A_N (\la)$ has $m$ invariant polynomials different from $1$ all equal to $p(\la)$. This allows us to obtain easily the finite elementary divisors of $A_N(\la)$ and, thus, the finite pole elementary divisors of $Q_N (\la)$ (in $\F$ if $\xi_N=\infty$, or in $\FF\setminus \{\xi_N\}$ if $\xi_N$ is finite). In particular, they are of the form $(\la-\xi_i)^{\nu_i}$ and, in order to obtain the partial multiplicities $\nu_i,$ we have to take into account possible repetitions in $(\xi_1, \ldots , \xi_{N-1})$.
	Observe that the infinite $\xi_i$ for $i=1,\ldots,N-1$ do not contribute at all to the finite pole elementary divisors of $Q_N (\la).$ Moreover, if $\xi_N = \infty$, then we can state the compact and simple result that the $m$ denominators of the global Smith--McMillan form of $Q_N (\la)$ are all equal to $p(\la)$. However, with this choice of state matrix, there is no way of obtaining information on the pole structure of $\xi_N$ when it is finite. This is the reason why, even if $L_{N}(\la)$ is minimal in $\F$, $L_N (\la)$ is not a linearization of $Q_N (\la)$ in $\F.$
	
\end{rem}

\subsection{The NLEIGS low rank problem from the point of view of polynomial system matrices} \label{sec.NLEIGSlowrank2}
The results in this section are the counterpart for $\widetilde{Q}_N(\la)$ in \eqref{eq.QNlowrank} and $\widetilde{L}_N (\la)$ in \eqref{eq.LNlowrank} of those presented in Section \ref{sec.NLEIGSbasic2} for $Q_N(\la)$ and $L_N (\la)$. For brevity, we avoid in this section to introduce auxiliary comments similar to the corresponding ones in Section \ref{sec.NLEIGSbasic2} and just some relevant differences are remarked. The motivation of this section is to obtain from $\widetilde{L}_N (\la)$ information about the finite poles of $\widetilde{Q}_N(\la)$. For this purpose, we consider the following partition of $\widetilde{L}_N (\la)$ in \eqref{eq.LNlowrank}, where $\widetilde{A}_N(\la)$ will play the role of the state matrix,
\begin{equation}\label{eq.LNlowrank2part} \widetilde{L}_N (\la) =:
\left[\begin{array}{c|c}
\widetilde{D}_N(\la) & -\widetilde{C}_N (\la) \phantom{\Big|} \\  \hline \phantom{\Big|}
\widetilde{B}_N(\la) & \widetilde{A}_N (\la)
\end{array}\right], \quad \mbox{where $\widetilde{D}_N (\la) = \left(1-\frac{\la}{\xi_{N}}\right) \, \widetilde{D}_0$,}
\end{equation}
and the rest of the blocks are easily described from the blocks in \eqref{eq.LNlowrank} as follows: $\widetilde{B}_N (\la)$ is the first block column of $\widetilde{K}_N (\la)$, $-\widetilde{C}_N (\la)$ is obtained by removing the first block of $\widetilde{M}_N (\la)$, and $\widetilde{A}_N (\la)$ is obtained by removing the first block column of $\widetilde{K}_N (\la)$.

The next lemma is the counterpart of Lemma \ref{lemm.NLEIGSpoly1}. Note that the low rank structure in $\widetilde{Q}_N(\la)$ complicates the minimality conditions in part (b) of Lemma \ref{lemm.NLEIGSpoly2}, which are expressed in terms of matrices of size $(2 m + r) \times (m+r)$.

\begin{lem} \label{lemm.NLEIGSpoly2} Let us consider the pencil $\widetilde{L}_N (\la)$ in \eqref{eq.LNlowrank} as a polynomial system matrix with state matrix $\widetilde{A}_N (\la)$, where $\widetilde{A}_N (\la)$ is defined through the partition \eqref{eq.LNlowrank2part}, and let $\widetilde{Q}_N (\la)$ be the rational matrix in \eqref{eq.QNlowrank}. Then the following statements hold:
	\begin{enumerate}
		\item[\rm (a)] The transfer function matrix of $\widetilde{L}_N (\la)$ is $\beta_0 \left(1-\frac{\la}{\xi_{N}}\right) \widetilde{Q}_N (\la).$
		\item[\rm (b)] Let us define the rational matrices
		\begin{align*}
		\widetilde{R}_N^{(1)} (\la) & = \frac{g_N}{h_{N-1}} \left[\sum_{j=1}^{p-1} \, \left( \prod_{k=j+1}^{p} \dfrac{g_k }{h_{k-1}}\right) \, \widetilde{D}_j \, + \, \widetilde{D}_p \right] \in \FF (\la)^{m\times m} , \\
		\widetilde{R}_N^{(2)} (\la) & = \sum_{j=p+1}^{N-1} \, \left( \prod_{k=j+1}^{N} \dfrac{g_k }{h_{k-1}}\right) \, \widetilde{L}_j \, + \, \widetilde{L}_N \in \FF (\la)^{m\times r} \,
		\end{align*}
		and
		\begin{equation}\label{eq.RNlowrank} \!\!\!\!\!\!\!\!\!\!
		\widetilde{R}_N (\la) =
		\left[
		\begin{array}{c|c}
		\widetilde{R}_N^{(1)} (\la) & \widetilde{R}_N^{(2)} (\la) \\ \hline
		\!\!\! \left( {\displaystyle \prod_{i=1}^{p-1}} \dfrac{g_i }{h_{i}}\right)
		g_p  \, I_m & \phantom{\begin{array}{c}
			\Bigg|\\
			\Bigg|
			\end{array}} 0 \\ \hline \phantom{\begin{array}{c}
			\Bigg|\\
			\Bigg|
			\end{array}}
		-h_p  \, \widetilde{U}^T & \!\!  \left( {\displaystyle \prod_{i=p+1}^{N-2} } \dfrac{g_i}{h_{i}}\right)
		g_{N-1}  \, I_r
		\end{array}
		\right]  .
		\end{equation}
		Let $\mathcal{P}_{N-1}$ be the set of finite poles in the list $(\xi_1 , \xi_2 , \ldots , \xi_{N-1})$, and assume that $\rank \widetilde{U} = r$ and that $\xi_i \ne \sigma_j$, $1\leq i\leq N$, $0\leq j\leq N-1$.
		Then, $\widetilde{L}_N (\la)$ is minimal in $\FF$ if and only if the matrix $\widetilde{R}_N (\xi_k)\in \FF^{(2m+r)\times (m+r)}$ has full column rank for all $\xi_k \in \mathcal{P}_{N-1}$.
	\end{enumerate}
\end{lem}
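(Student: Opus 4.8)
The plan is to settle part (a) by a direct computation of the Schur complement defining the transfer function matrix, and part (b) by first observing that minimality can only fail at the finite poles in $\mathcal{P}_{N-1}$, and then reducing the minimality test at each such point to the rank condition on $\widetilde{R}_N(\la)$; the routine but lengthy algebra behind this last reduction will be postponed to an appendix, following the pattern of the proof of Lemma~\ref{lemm.NLEIGSpoly1}(b) in \ref{appendix1}.

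For part (a), first note that, with the partition \eqref{eq.LNlowrank2part}, the state matrix $\widetilde{A}_N(\la)$ is block lower bidiagonal: its diagonal blocks are $g_1(\la)I_m,\dots,g_p(\la)I_m,g_{p+1}(\la)I_r,\dots,g_{N-1}(\la)I_r$ and its subdiagonal blocks are $-h_1(\la)I_m,\dots,-h_{p-1}(\la)I_m$, $-h_p(\la)\widetilde{U}^T$, $-h_{p+1}(\la)I_r,\dots,-h_{N-2}(\la)I_r$. In particular $\widetilde{A}_N(\la)$ is nonsingular over $\FF(\la)$, and $\widetilde{A}_N(\la_0)$ is invertible for every $\la_0\in\FF$ that is not a finite pole in the list $(\xi_1,\dots,\xi_{N-1})$. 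Since $\widetilde{B}_N(\la)$ has only the block $-h_0(\la)I_m$ in its first block row and zeros elsewhere, only the first block column of $\widetilde{A}_N(\la)^{-1}$ is needed; solving $\widetilde{A}_N(\la)\,x(\la)=\widetilde{B}_N(\la)$ by forward substitution, and using repeatedly the recursion $g_{j+1}(\la)\,b_{j+1}(\la)=h_j(\la)\,b_j(\la)$ satisfied by the functions in \eqref{eq_interpolant}, yields
\[
\widetilde{A}_N(\la)^{-1}\widetilde{B}_N(\la)=-\beta_0\,[\,b_1(\la)I_m\ \ \cdots\ \ b_p(\la)I_m\ \ b_{p+1}(\la)\widetilde{U}^T\ \ \cdots\ \ b_{N-1}(\la)\widetilde{U}^T\,]^{T}.
\]
Substituting this into $\widetilde{D}_N(\la)+\widetilde{C}_N(\la)\widetilde{A}_N(\la)^{-1}\widetilde{B}_N(\la)$ and simplifying by means of $b_0=1/\beta_0$, $g_N(\la)b_N(\la)=h_{N-1}(\la)b_{N-1}(\la)$ and $g_N(\la)/\beta_N=1-\la/\xi_N$ collapses the expression to $\beta_0\big(1-\tfrac{\la}{\xi_N}\big)\widetilde{Q}_N(\la)$, which proves (a).

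For part (b), since $\widetilde{A}_N(\la_0)$ is invertible at every $\la_0\in\FF\setminus\mathcal{P}_{N-1}$, the polynomial system matrix $\widetilde{L}_N(\la)$ is minimal, in the sense of Definition~\ref{def_minimalpolsysmat}, at every such $\la_0$; hence $\widetilde{L}_N(\la)$ is minimal in $\FF$ if and only if it is minimal at each $\xi_k\in\mathcal{P}_{N-1}$. Fix such a $\xi_k$. One of the two rank conditions defining minimality concerns $[\,\widetilde{B}_N(\xi_k)\ \ \widetilde{A}_N(\xi_k)\,]$, which is precisely $\widetilde{K}_N(\xi_k)$; writing $y^{T}\widetilde{K}_N(\xi_k)=0$ and equating block columns, the hypotheses $\xi_i\neq\sigma_j$ (so that $h_j(\xi_k)=\xi_k-\sigma_j\neq0$) and $\rank\widetilde{U}=r$ (so that $\widetilde{U}$ has trivial kernel) force successively $y_1=0,\ y_2=0,\ \dots,\ y_{N-1}=0$, and hence $\widetilde{K}_N(\xi_k)$ has full row rank. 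Therefore $\widetilde{L}_N(\la)$ is minimal at $\xi_k$ if and only if the matrix $\begin{bmatrix}-\widetilde{C}_N(\xi_k)\\ \widetilde{A}_N(\xi_k)\end{bmatrix}$ has full column rank.

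It thus remains to prove that $\begin{bmatrix}-\widetilde{C}_N(\xi_k)\\ \widetilde{A}_N(\xi_k)\end{bmatrix}$ has full column rank if and only if $\widetilde{R}_N(\xi_k)$ in \eqref{eq.RNlowrank} has full column rank. The plan for this is to multiply the rational matrix $\begin{bmatrix}-\widetilde{C}_N(\la)\\ \widetilde{A}_N(\la)\end{bmatrix}$ on the left and on the right by rational matrices that are regular and invertible at $\xi_k$, and hence rank-preserving upon evaluation there, built from the two bidiagonal chains of $\widetilde{A}_N(\la)$ (the ``$m$-chain'' $g_1,\dots,g_p$ of length $p$ and the ``$r$-chain'' $g_{p+1},\dots,g_{N-1}$ of length $N-1-p$, coupled through the block $-h_p(\la)\widetilde{U}^T$) and from the recursion used in part (a), so as to bring it to a block form whose only possibly rank-deficient part at $\xi_k$ is $\widetilde{R}_N(\la)$, all other blocks being identity blocks or zero rows; evaluating at $\xi_k$ then gives the equivalence. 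For a simple $\xi_k$ this unfolds as follows: if $\xi_k$ occurs at a position $\leq p$, then the block $\big(\prod_{i=p+1}^{N-2}\tfrac{g_i}{h_i}\big)g_{N-1}I_r$ of $\widetilde{R}_N$ is invertible at $\xi_k$ whereas $\big(\prod_{i=1}^{p-1}\tfrac{g_i}{h_i}\big)g_p I_m$ vanishes there, and clearing with the former reduces the condition to nonsingularity of an $m\times m$ Schur complement of $\widetilde{R}_N^{(1)}$ by $\widetilde{R}_N^{(2)}$; if instead $\xi_k$ occurs at a position $>p$, the roles of these two blocks are exchanged and the condition reduces to $\widetilde{R}_N^{(2)}(\xi_k)$ having full column rank $r$; in both cases the outcome matches what one gets by intersecting $\ker\widetilde{A}_N(\xi_k)$ with $\ker\widetilde{C}_N(\xi_k)$ after parametrising $\ker\widetilde{A}_N(\xi_k)$ through the bidiagonal structure. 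The hard part will be carrying out this reduction with full rigor, keeping track of the block-size change at the index $p$, of the low-rank factor $\widetilde{U}^{T}$, and of the bookkeeping required when $\xi_k$ is repeated in $(\xi_1,\dots,\xi_{N-1})$, in particular when it repeats on both sides of the index $p$, in which case both ``$g$-products'' in \eqref{eq.RNlowrank} vanish at $\xi_k$ and the $m$-type and $r$-type conditions must be combined. Being elementary but long, and entirely parallel to the proof of Lemma~\ref{lemm.NLEIGSpoly1}(b), these details are postponed to the appendix.
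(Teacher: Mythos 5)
Your overall strategy coincides with the paper's: for (a), compute the first block column of $\widetilde{A}_N(\la)^{-1}$ by forward substitution through the block bidiagonal structure and simplify with the recursion $g_{j+1}b_{j+1}=h_j b_j$; for (b), observe that minimality can only fail on $\mathcal{P}_{N-1}$, dispose of the row-rank condition at each $\xi_k$ via $h_j(\xi_k)\neq 0$ and $\ker\widetilde{U}=\{0\}$, and reduce everything to the full column rank of $\widetilde{Z}_N(\xi_k)=\left[\begin{smallmatrix}-\widetilde{C}_N(\xi_k)\\ \widetilde{A}_N(\xi_k)\end{smallmatrix}\right]$. All of that is correct and is exactly how the paper proceeds. (A minor notational slip: in your displayed formula for $\widetilde{A}_N(\la)^{-1}\widetilde{B}_N(\la)$ the outer transpose applied blockwise turns your $\widetilde{U}^{T}$ back into $\widetilde{U}$, which is dimensionally wrong for the blocks of height $r$; the correct block column has blocks $-\beta_0 b_j(\la)\widetilde{U}^{T}$ for $j>p$.)

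The genuine gap is the second half of (b). The whole content of the equivalence ``$\widetilde{Z}_N(\xi_k)$ has full column rank $\iff$ $\widetilde{R}_N(\xi_k)$ has full column rank'' is the explicit construction of left and right multipliers, regular at every point of $\mathcal{P}_{N-1}$, that compress $\widetilde{Z}_N(\la)$ to a matrix whose only non-identity, non-zero part is precisely the matrix $\widetilde{R}_N(\la)$ of \eqref{eq.RNlowrank}, with those particular products of $g_i/h_i$, the coupling block $-h_p\widetilde{U}^{T}$, and the stated formulas for $\widetilde{R}_N^{(1)}$ and $\widetilde{R}_N^{(2)}$. You announce this plan and check consistency only for a \emph{simple} $\xi_k$ lying entirely on one side of the index $p$, then explicitly postpone ``the hard part'' to an appendix you do not supply. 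But the case you defer — a $\xi_k$ repeated in the list on both sides of $p$, where neither diagonal product block of $\widetilde{R}_N$ is invertible and the $m$-chain and $r$-chain kernels must be coupled through $\widetilde{U}^{T}$ — is exactly where the specific shape of \eqref{eq.RNlowrank} is tested, and your sketch gives no argument there. The paper's Appendix B carries this out through an explicit chain of block eliminations (inverting the two bidiagonal subchains, clearing the low-rank row against the $I_{(N-2-p)r}$ block, and rescaling the first block row by $\beta_N/h_{N-1}$); without some such computation nothing in your argument certifies that the matrix defined in \eqref{eq.RNlowrank}, rather than some other condensation of $\widetilde{Z}_N(\la)$, is the correct test matrix, so the lemma is not yet proved.
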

\begin{proof}
	Part (a). The proof is similar to that of Lemma \ref{lemm.NLEIGSpoly1}(a) with some differences coming from the presence of the low rank term in $\widetilde{Q}_N (\la)$. According to \eqref{eq.LNlowrank2part}, the transfer function matrix of $\widetilde{L}_N(\la)$ is $\widetilde{D}_N (\la) + \widetilde{C}_N (\la) \widetilde{A}_N (\la)^{-1} \widetilde{B}_N (\la)$. The computation of this matrix is very easy because, again, $\widetilde{B}_N (\la) = \left[ \begin{array}{cccc}
	-h_0  I_m & 0 & \cdots & 0
	\end{array}\right]^T$, and only the first block column of $\widetilde{A}_N (\la)^{-1}$ is needed, which, in this case, is equal to
	\[
	\frac{1}{b_1(\la) g_1 } \left[ \begin{array}{cccccc}
	b_1(\la) I_m& \cdots & b_{p} (\la) I_m &  b_{p+1}(\la) \widetilde{U} & \cdots & b_{N-1} (\la) \widetilde{U}
	\end{array}\right]^T.
	\]

	Part (b). The proof is elementary but long. Thus, it is postponed to \ref{appendix2}.
\end{proof}

\begin{rem} \label{rem.simplicondlowrank}
	{\rm If, in addition to $\rank \widetilde{U}=r$ and $\xi_i \ne \sigma_j$, $1\leq i\leq N$, $0\leq j\leq N-1$, we assume that $\xi_1 = \cdots = \xi_p = \infty$, then the necessary and sufficient conditions for minimality in Lemma \ref{lemm.NLEIGSpoly2}(b) can be considerably simplified, since we get as an immediate corollary of Lemma \ref{lemm.NLEIGSpoly2}(b) that ``$\widetilde{L}_N (\la)$ is minimal in $\FF$ if and only if the matrix $\widetilde{R}_N^{(2)} (\xi_k)\in \FF^{m\times r}$ has full column rank for every $\xi_k \in \mathcal{P}_{N-1}$''. Note that the hypothesis $\xi_1 = \cdots = \xi_p = \infty$ implies that the ``no-low rank'' term $\sum_{i=0}^{p} b_i(\la) \widetilde{D}_i$ of $\widetilde{Q}_N (\la)$ is a polynomial matrix, as often happens in NLEPs \cite{nlep}.
		
		Observe also that if $\widehat{R}_N (\la)$ is the $(m+r) \times (m+r)$ matrix obtained from $\widetilde{R}_N (\la)$ in \eqref{eq.RNlowrank} by removing the second block row, then under the assumptions $\rank \widetilde{U}=r$ and $\xi_i \ne \sigma_j$, $1\leq i\leq N$, $0\leq j\leq N-1$, we get, as another immediate corollary of Lemma \ref{lemm.NLEIGSpoly2}(b), the following sufficient condition for minimality: ``if $\widehat{R}_N (\xi_k)\in \FF^{(m+r)\times (m+r)}$ is invertible for every $\xi_k \in \mathcal{P}_{N-1}$, then $\widetilde{L}_N (\la)$ is minimal in $\FF$''.
	}
\end{rem}

Theorem \ref{theo.NLEIGSpoly2} is the main result in this section and is an easy corollary of Lemma \ref{lemm.NLEIGSpoly2}. Its proof is omitted because is very similar to that of Theorem \ref{theo.NLEIGSpoly1}.

\begin{theo} \label{theo.NLEIGSpoly2} Let $\widetilde{Q}_N (\la)$ be the rational matrix in \eqref{eq.QNlowrank}, $\widetilde{L}_N(\la)$ be the pencil in \eqref{eq.LNlowrank}, $\widetilde{A}_N (\la)$ be the submatrix of $\widetilde{L}_N(\la)$ in \eqref{eq.LNlowrank2part}, and $\widetilde{R}_N (\la)$ be the rational matrix in \eqref{eq.RNlowrank}. Consider $\mathcal{P}_{N-1}$ the set of finite poles in the list $(\xi_1 , \xi_2 , \ldots , \xi_{N-1})$. If $\rank \widetilde{U}=r$, $\xi_i \ne \sigma_j$, $1\leq i\leq N$, $0\leq j\leq N-1$, and $\widetilde{R}_N (\xi_k)\in \FF^{(2m+r)\times (m+r)}$ has full column rank for every $\xi_k \in \mathcal{P}_{N-1}$, then $\widetilde{L}_N (\la)$ is a linearization of $\widetilde{Q}_N (\la)$ with state matrix $\widetilde{A}_N (\la)$ in $\FF$, if $\xi_N = \infty$, or in $\FF\setminus \{\xi_N\}$, if $\xi_N$ is finite.
\end{theo}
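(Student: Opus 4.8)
The plan is to follow verbatim the route used for Theorem~\ref{theo.NLEIGSpoly1}, replacing Lemma~\ref{lemm.NLEIGSpoly1} by Lemma~\ref{lemm.NLEIGSpoly2}. First I would invoke Lemma~\ref{lemm.NLEIGSpoly2}(b): under the stated hypotheses ($\rank \widetilde{U}=r$, $\xi_i \ne \sigma_j$ for $1\leq i\leq N$ and $0\leq j\leq N-1$, and $\widetilde{R}_N(\xi_k)$ of full column rank for every $\xi_k \in \mathcal{P}_{N-1}$), the pencil $\widetilde{L}_N(\la)$, partitioned as in \eqref{eq.LNlowrank2part} with state matrix $\widetilde{A}_N(\la)$, is minimal in all of $\FF$. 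Away from $\mathcal{P}_{N-1}$ this is automatic, since $\widetilde{A}_N(\la_0)$ is then invertible; the content of the hypotheses is exactly what is needed to keep minimality at the finite poles $\xi_k\in\mathcal{P}_{N-1}$. This establishes condition (a) of Definition~\ref{def_pointstronglin} at every $\la_0\in\FF$, hence condition (a) of Definition~\ref{def:linsubset} over the whole field.

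Second, I would use Lemma~\ref{lemm.NLEIGSpoly2}(a), which identifies the transfer function matrix of $\widetilde{L}_N(\la)$ with this partition as $\widehat{G}(\la):=\beta_0\left(1-\la/\xi_N\right)\widetilde{Q}_N(\la)\in\FF(\la)^{m\times m}$. Since $\widetilde{Q}_N(\la)$ and $\widehat{G}(\la)$ have the same size, condition (b) of Definition~\ref{def_pointstronglin} may be verified with $s_1=s_2=0$, and it amounts to exhibiting rational matrices $R_1(\la),R_2(\la)$, regular at the relevant points, with $R_1(\la)\,\widetilde{Q}_N(\la)\,R_2(\la)=\widehat{G}(\la)$. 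The choice $R_2(\la)=I_m$ and $R_1(\la)=\beta_0\left(1-\la/\xi_N\right)I_m$ works: if $\xi_N=\infty$ then $1-\la/\xi_N$ is the constant $1$ and $R_1(\la)=\beta_0 I_m$ is regular at every point of $\FF$; if $\xi_N$ is finite then $\beta_0\left(1-\la/\xi_N\right)$ is a nonzero scalar rational function whose unique zero and unique pole are both at $\xi_N$, so $R_1(\la)$ is regular at every point of $\FF\setminus\{\xi_N\}$.

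Combining the two steps, $\widetilde{L}_N(\la)$ meets both requirements of Definition~\ref{def_pointstronglin} at every point of $\FF$ when $\xi_N=\infty$, and at every point of $\FF\setminus\{\xi_N\}$ when $\xi_N$ is finite; by Definition~\ref{def:linsubset} this is precisely the asserted statement. The only substantial ingredient is Lemma~\ref{lemm.NLEIGSpoly2} itself, and within it part (b): the minimality analysis (relegated to \ref{appendix2}) must translate the full-row- and full-column-rank conditions on the large blocks $\left[\widetilde{B}_N(\la_0)\ \ \widetilde{A}_N(\la_0)\right]$ and $\left[-\widetilde{C}_N(\la_0)^{T}\ \ \widetilde{A}_N(\la_0)^{T}\right]^{T}$ --- which are complicated by the low rank term $\widetilde{L}_i\widetilde{U}^T$ in $\widetilde{Q}_N(\la)$ and by the $\widetilde{U}^T$ block inside $\widetilde{K}_N(\la)$ --- into the compact full-column-rank condition on the $(2m+r)\times(m+r)$ matrix $\widetilde{R}_N(\xi_k)$. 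Once Lemma~\ref{lemm.NLEIGSpoly2} is granted, Theorem~\ref{theo.NLEIGSpoly2} is an immediate corollary, exactly as Theorem~\ref{theo.NLEIGSpoly1} follows from Lemma~\ref{lemm.NLEIGSpoly1}.
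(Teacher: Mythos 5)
Your proposal is correct and follows exactly the route the paper itself takes: it declares Theorem~\ref{theo.NLEIGSpoly2} an immediate corollary of Lemma~\ref{lemm.NLEIGSpoly2}, with a proof "very similar to that of Theorem~\ref{theo.NLEIGSpoly1}'' (minimality in $\FF$ from part (b), equivalence of the transfer function $\beta_0(1-\la/\xi_N)\widetilde{Q}_N(\la)$ with $\widetilde{Q}_N(\la)$ on the stated set, then Definitions~\ref{def_pointstronglin} and~\ref{def:linsubset} with $s_1=s_2=0$). One cosmetic slip: when $\xi_N$ is finite, $\beta_0(1-\la/\xi_N)$ is a polynomial with its unique zero at $\xi_N$ and no finite pole (its pole is at infinity), but your conclusion that $R_1(\la)$ is regular on $\FF\setminus\{\xi_N\}$ is nevertheless correct.
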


Finally, note that the conditions in Theorem \ref{theo.NLEIGSpoly2} on the full column rank of the matrices $\widetilde{R}_N (\xi_k)$ can be simplified as in Remark \ref{rem.simplicondlowrank} under extra hypotheses.

\section{Conclusions and future work}\label{sect:con}
A theory of local linearizations of rational matrices has been carefully presented in this paper, by developing as starting point the extension of Rosenbrock's minimal polynomial system matrices to a local scenario. Moreover, this theory has been applied to a number of pencils that have appeared recently in some influential papers on solving numerically NLEPs by combining rational approximations, linearizations of the resulting rational matrices, and efficient numerical algorithms for generalized eigenvalue problems adapted to the structure of such linearizations. It has been emphasized throughout the paper that the theory of local linearizations allows us to view these pencils, and to explain their properties, from rather different perspectives, which depend on the particular choice of the submatrix of the pencil to be considered as state matrix. In particular, we have seen that the choice of an empty state matrix is simple and adequate for those rational matrices and pencils arising in NLEPs, when the poles are already known from the approximation process. This has led us to define and analyze the very general family of block full rank pencils, as a template that covers many of the pencils, available in the literature, that linearize the rational approximations in the corresponding target set. We plan to extend these ideas in \cite{local2}, where other ways to choose the state matrices will be explored. In addition, the results in this paper and also the new ones in \cite{local2} will be applied to the pencils defined in \cite{automatic}, as well as to other pencils. Finally, we also plan to study numerical properties of some of the linearizations analyzed in this work. In particular, given a linearization of the REP in a set, it is important to study the backward stability in terms of the structure of the rational matrix defining the REP when applying a numerical method to compute the eigenvalues of the linearization. In addition, we plan to investigate the conditioning of eigenvalues, that is, the sensitivity to perturbations, both in the original REP and its linearization, of a zero that is not a pole of the rational matrix.

\appendix
\section{Proof of Lemma \ref{lemm.NLEIGSpoly1}(b)} \label{appendix1} Let us consider $L_N(\la)$ partitioned as in \eqref{eq.LN2part} and as a polynomial system matrix with state matrix $A_N(\la)$. Recall throughout the proof that the parameters $\beta_0, \beta_1, \ldots , \beta_N$ are all different from zero.  Observe first that $\xi_i \ne \sigma_j$, $1\leq i \leq N$ and $0\leq j \leq N-1$, implies that $\left[\begin{array}{cc}
B_N (\la_0) & A_N (\la_0)
\end{array} \right]$ has full row rank for any $\la_0 \in \FF$. On the other hand, if we define
\begin{equation}\label{eq.appZN} Z_N (\la) :=
\left[\begin{array}{c}
-C_N (\la) \\ A_N (\la)
\end{array} \right],
\end{equation}
then $Z_N (\la_0)$ has full column rank for every $\la_0 \in \FF \setminus \mathcal{P}_{N-1}$, because $A_N (\la_0)$ is invertible in $\FF \setminus \mathcal{P}_{N-1}$. Therefore, combining the discussion above with Definition \ref{def_minimalpolsysmatsubset}, we obtain that $L_N (\la)$ is minimal in $\FF$ if and only if $Z_N (\xi_k)$ has full column rank for every $\xi_k \in \mathcal{P}_{N-1}$. The rest of the proof proceeds as follows: we will find a rational matrix $S_N (\la)$ such that is equivalent to $Z_N (\la)$ in $\mathcal{P}_{N-1}$ and has a simple structure that allows us to see that $S_N(\xi_k)$ (and, so, $Z_N (\xi_k)$) has full column rank for every $\xi_k \in \mathcal{P}_{N-1}$ if and only if $R_N(\xi_k)$ is invertible for every $\xi_k \in \mathcal{P}_{N-1}$, where $R_N (\la)$ is the rational matrix in \eqref{eq.RN}.

For brevity, we use the notation $g_i:=g_{i}(\la)$ and $h_{i}:=h_{i}(\la)$ for the scalar functions in \eqref{scalar_functions}. In addition, $Z_N (\la)$ in \eqref{eq.appZN} is partitioned as
\begin{equation}\label{eq.appZN2}
Z_N (\la) =:
\left[\begin{array}{cc}
Z_{11} (\la) & Z_{12} (\la) \\ Z_{21} (\la) & Z_{22} (\la)
\end{array} \right],
\end{equation}
where
\begin{align*}
& Z_{11} (\la) =
\left[\begin{array}{ccccc}
\frac{g_N}{\beta_N} D_1 & \frac{g_N}{\beta_N} D_2 & \cdots & \cdots & \frac{g_N}{\beta_N} D_{N-2}  \\  g_1 I_m & 0 & \cdots & \cdots & 0
\end{array} \right],
\quad
Z_{12} (\la) =
\left[\begin{array}{c}
\frac{g_N}{\beta_N} D_{N-1} + \frac{h_{N-1}}{\beta_N} D_N  \\ 0
\end{array} \right],
\\
& Z_{21} (\la) =
\left[\begin{array}{ccccc}
-h_1 & g_2 &  & &  \\
& -h_2 & g_3 &  &  \\
& & \ddots & \ddots & \\
& & & -h_{N-3} & g_{N-2} \\
&&&& -h_{N-2}
\end{array} \right] \otimes I_m, \quad
Z_{22} (\la) =
\left[\begin{array}{c} 0 \\ \vdots \\ \vdots \\ 0 \\
g_{N-1} I_m
\end{array} \right].
\end{align*}
Note that the matrix $Z_{21} (\la)$ is invertible in $\mathcal{P}_{N-1}$ and that the last block column of $Z_{21} (\la)^{-1}$ is
\begin{equation}\label{eq.appY22}
Y_{22} (\la) :=
-\left[\begin{array}{ccccc}
\displaystyle  \!  \! \frac{1}{h_1} \prod_{i=2}^{N-2} \frac{g_i}{h_i} , &
\displaystyle  \! \frac{1}{h_2} \prod_{i=3}^{N-2} \frac{g_i}{h_i} , &  \!  \!
\cdots ,&
\displaystyle  \! \frac{1}{h_{N-3}} \frac{ g_{N-2}}{h_{N-2}} , &
\displaystyle  \! \frac{1}{h_{N-2}}
\end{array} \right]^T \, \otimes \, I_m \, .
\end{equation}

Next, a sequence of equivalence transformations in $\mathcal{P}_{N-1}$ are applied to $Z_N(\la)$. Such transformations are described by using the notation in \eqref{eq.appZN2} and \eqref{eq.appY22}, and the first one is
\[
Y_N (\la) :=\left[\begin{array}{c|c} I_{2m} & 0 \\ \hline
0 & Z_{21} (\la)^{-1} \end{array} \right] Z_N (\la) =
\left[\begin{array}{c|c}
Z_{11} (\la) & Z_{12} (\la) \\ \hline I_{(N-2) m} & g_{N-1} Y_{22} (\la)
\end{array} \right].
\]
The second transformation is designed to turn zero the second block row of $Z_{11} (\la)$ as follows
\begin{align*}
W_N (\la) & := \diag \left( I_m ,
\left[
\begin{array}{cc}
I_m & -g_1 I_m \\
0 & I_m
\end{array}
\right]
, I_{(N-3)m} \right) \, \, Y_N (\la)\\
&=
\left[
\begin{array}{c|c}
\begin{array}{ccc}
\frac{g_N}{\beta_N} D_1 & \cdots & \frac{g_N}{\beta_N} D_{N-2}  \\ 0  & \cdots & 0
\end{array} &
\begin{array}{c}
\frac{g_N}{\beta_N} D_{N-1} + \frac{h_{N-1}}{\beta_N} D_N  \\
\left(\prod_{i=1}^{N-2} \frac{g_i}{h_i}\right) g_{N-1} I_m
\end{array}
\\ \hline
I_{(N-2) m} &  g_{N-1} Y_{22} (\la)
\end{array}
\right] .
\end{align*}
The third transformation turns zero the block $ g_{N-1} Y_{22} (\la)$ of $W_N (\la)$ and performs a convenient scalar multiplication in its first block row. Such transformation is
\begin{align*}
X_N (\la) & :=  \left[
\begin{array}{cc}
\frac{\beta_N}{h_{N-1}} I_m & 0 \\
0 & I_{(N-1) m}
\end{array}
\right]
W_N (\la)
\left[
\begin{array}{c|c}
I_{(N-2)m} & -g_{N-1} Y_{22} (\la) \\ \hline
0 & I_m
\end{array}
\right]\\
& = \left[
\begin{array}{c|c}
\begin{array}{ccc}
\frac{g_N}{h_{N-1}} D_1 & \cdots & \frac{ g_N}{h_{N-1}} D_{N-2}  \\ 0  & \cdots & 0
\end{array} &
\begin{array}{c}
R_{N}(\la)  \\
\left(\prod_{i=1}^{N-2} \frac{g_i}{h_i}\right) g_{N-1} I_m
\end{array}
\\ \hline
I_{(N-2) m} & 0
\end{array}
\right] ,
\end{align*}
where $R_{N}(\la)$ is the rational matrix in \eqref{eq.RN}. The last transformation makes zero the first $N-2$ blocks of size $m\times m$ in the first block row of $X_N (\la)$ and yields the announced matrix $S_N (\la)$ equivalent to $Z_N (\la)$ in $\mathcal{P}_{N-1}$. More precisely,
\begin{align*}
S_N (\la)& :=  \left[
\begin{array}{c|c}
\begin{array}{cc}
I_m & 0 \\
0 & I_m
\end{array}
&
\begin{array}{ccc}
-\frac{ g_N}{h_{N-1}} D_1 & \cdots & -\frac{g_N}{h_{N-1}} D_{N-2}  \\ 0  & \cdots & 0
\end{array}
\\ \hline
0 &  I_{(N-2) m}
\end{array}
\right] \; X_N (\la)
\\
& = \left[
\begin{array}{c|c}
\begin{array}{ccc}
0 & \cdots & 0 \\ 0  & \cdots & 0
\end{array} &
\begin{array}{c}
R_N (\la)  \\
\left(\prod_{i=1}^{N-2} \frac{g_i}{h_i}\right) g_{N-1} I_m
\end{array}
\\ \hline
I_{(N-2) m} & 0
\end{array}
\right].
\end{align*}
The block $H(\la) := \left(\prod_{i=1}^{N-2} \frac{g_i}{h_i}\right) g_{N-1} I_m$ of $S_N (\la)$ satisfies $H(\xi_k) = 0$ for all $\xi_k \in \mathcal{P}_{N-1}$. Therefore, $S_N (\xi_k)$ (and, so, $Z_N (\xi_k)$) has full column rank for every $\xi_k \in \mathcal{P}_{N-1}$ if and only if $R_N (\xi_k)$ is invertible for all $\xi_k \in \mathcal{P}_{N-1}$, and the result is proved.

\section{Proof of Lemma \ref{lemm.NLEIGSpoly2}(b)} \label{appendix2} The first part of the proof is completely analogous to the first part of the proof of Lemma \ref{lemm.NLEIGSpoly1}(b). So, some details are ommited. Let us consider $\widetilde{L}_N(\la)$ partitioned as in \eqref{eq.LNlowrank2part} and as a polynomial system matrix with state matrix $\widetilde{A}_N(\la)$. Then the hypotheses $\rank \widetilde{U} = r$ and $\xi_i \ne \sigma_j$, $1\leq i \leq N$ and $0\leq j \leq N-1$, imply that $\left[\begin{array}{cc}
\widetilde{B}_N (\la_0) & \widetilde{A}_N (\la_0)
\end{array} \right]$ has full row rank for any $\la_0 \in \FF$. Also, if we define
\begin{equation}\label{eq.apptildeZN} \widetilde{Z}_N (\la) :=
\left[\begin{array}{c}
-\widetilde{C}_N (\la) \\ \widetilde{A}_N (\la)
\end{array} \right],
\end{equation}
then $\widetilde{Z}_N (\la_0)$ has full column rank for every $\la_0 \in \FF \setminus \mathcal{P}_{N-1}$, because $\widetilde{A}_N (\la_0)$ is invertible in $\FF \setminus \mathcal{P}_{N-1}$. Therefore, $\widetilde{L}_N (\la)$ is minimal in $\FF$ if and only if $\widetilde{Z}_N (\xi_k)$ has full column rank for every $\xi_k \in \mathcal{P}_{N-1}$. In the rest of the proof we will find a rational matrix $\widetilde{S}_N (\la)$ such that is equivalent to $\widetilde{Z}_N (\la)$ in $\mathcal{P}_{N-1}$ and that allows us to see that $\widetilde{S}_N(\xi_k)$ (and, so, $\widetilde{Z}_N (\xi_k)$) has full column rank for every $\xi_k \in \mathcal{P}_{N-1}$ if and only if $\widetilde{R}_N(\xi_k)$ in \eqref{eq.RNlowrank} has full column rank for every $\xi_k \in \mathcal{P}_{N-1}$. We advance that this second part of the proof is considerably more involved than the corresponding part of the proof of Lemma \ref{lemm.NLEIGSpoly1}(b), as a consequence of the presence in $\widetilde{Z}_N (\la)$ of two kinds of blocks, one kind corresponding to the ``full rank'' part of $\widetilde{Q}_N (\la)$, i.e., the first summation in \eqref{eq.QNlowrank}, and another kind corresponding to the ``low rank'' part of $\widetilde{Q}_N (\la)$. Nevertheless, the equivalence transformations in $\mathcal{P}_{N-1}$ used in the sequel are similar to those in the proof of Lemma \ref{lemm.NLEIGSpoly1}(b), and many details will be omitted for brevity. Recall that we use the notation in \eqref{scalar_functions} omitting the dependence on $\la$ for simplicity, i.e., we write simply $g_i$ and $h_j$.

The first two equivalence transformations in $\mathcal{P}_{N-1}$ that we perform affect only to the last $N-1-p$ block rows of $\widetilde{Z}_N (\la)$, i.e., those containing $I_r$ matrices. Thus, in this part of the proof, it is convenient to partition $\widetilde{Z}_N (\la)$ as
\[
\widetilde{Z}_N (\la) =:
\left[\begin{array}{c}
\widetilde{Z}_N^{(1)} (\la)  \\ \widetilde{Z}_N^{(2)} (\la)
\end{array} \right],
\]
with $\widetilde{Z}_N^{(1)} (\la)$ comprising the first $p+1$ block rows of $\widetilde{Z}_N (\la)$. In order to construct the first equivalence transformation, we pay attention to the following submatrix of $\widetilde{Z}_N^{(2)} (\la)$,
\[
\widetilde{H}_N (\la) :=
\left[\begin{array}{ccccc}
-h_{p+1} & g_{p+2} &  & &  \\
& -h_{p+2} & g_{p+3} &  &  \\
& & \ddots & \ddots & \\
& & & -h_{N-3} & g_{N-2} \\
&&&& -h_{N-2}
\end{array} \right] \otimes I_r,
\]
which is invertible in $\mathcal{P}_{N-1}$ and has the same structure as $Z_{21} (\la)$ in \eqref{eq.appZN2}. The last block column of $\widetilde{H}_N(\la)^{-1}$ has a structure similar to \eqref{eq.appY22} and is denoted by $J(\la)$. Then, the first two equivalence transformations are
\begin{align*}
\widetilde{W}_N (\la) & := \diag \left( I_{(p+1)m} ,
\left[
\begin{array}{cc}
I_r & -g_{p+1} I_r \\
0 & I_r
\end{array}
\right]
, I_{(N-3-p)r} \right) \, \diag (I_{(p+1)m +r} , \widetilde{H}_N (\la)^{-1}) \,\, \widetilde{Z}_N (\la) \\ & =: \left[\begin{array}{c}
\widetilde{Z}_N^{(1)} (\la)  \\ \widetilde{W}_N^{(2)} (\la)
\end{array} \right],
\end{align*}
where
\[
\widetilde{W}_N^{(2)} (\la) =
\left[\begin{array}{ccc}
e_p^T \otimes (-h_p \widetilde{U}^T) & 0 & \left(\prod_{i=p+1}^{N-2} \frac{g_i}{h_i}\right) g_{N-1} I_r \\
0 & I_{(N-2-p)r} & g_{N-1} J(\la)
\end{array} \right] ,
\]
with $e_p^T = [0 \,\, \cdots \,\, 0 \,\, 1] \in \FF^{1 \times p}$. In order to describe the outcome of the next two transformations, we consider the following submatrix of $\widetilde{Z}_N^{(1)} (\la)$:
\[
\widetilde{E}_N (\la) :=\left[
\begin{array}{ccccc}
g_1  & &  &  & \\
-h_1 & g_2  & & &  \\
&  \ddots & & \ddots &  \\
&&& -h_{p-1}  & g_p
\end{array} \right]\otimes I_m.
\]
The next equivalence transformations in $\mathcal{P}_{N-1}$ are
\begin{align*}
\widetilde{X}_N (\la) & :=  \left[
\begin{array}{cc}
\frac{\beta_N}{h_{N-1}} I_m & 0 \\
0 & I_{pm+(N-1-p)r}
\end{array}
\right]
\widetilde{W}_N (\la)
\left[
\begin{array}{c|c|c}
I_{pm} & 0 & 0 \\ \hline
0 & I_{(N-2-p)r} & - g_{N-1} J (\la) \\ \hline
0& 0 & I_r
\end{array}
\right]\\
& =  \left[\begin{array}{c|c|c}
\begin{array}{ccc} \! \!\!
\frac{g_N}{h_{N-1}} \widetilde{D}_1 & \cdots & \! \!\! \frac{g_N}{h_{N-1}} \widetilde{D}_p
\end{array}
&
\begin{array}{ccc} \! \!\!
\frac{g_N}{h_{N-1}} \widetilde{L}_{p+1} & \cdots & \! \!\! \frac{g_N}{h_{N-1}} \widetilde{L}_{N-2}
\end{array}
&
\widetilde{R}_N^{(2)} (\la)  \phantom{\Big|} \\  \hline
\widetilde{E}_N (\la) \phantom{\Big|} & 0& 0 \\  \hline
e_p^T \otimes (-h_p \widetilde{U}^T) & 0 & \left(\prod_{i=p+1}^{N-2} \frac{g_i}{h_i}\right) g_{N-1} I_r \\  \hline
0 & I_{(N-2-p)r} & 0
\end{array} \right],
\end{align*}
where $\widetilde{R}_N^{(2)} (\la)$ is the rational matrix appearing in \eqref{eq.RNlowrank}. Observe that the structure of the last block row of $\widetilde{X}_N (\la)$ allows us to perform an equivalence transformation in $\mathcal{P}_{N-1}$ that turns the block $\left[ \begin{array}{ccc}
\frac{g_N}{h_{N-1}} \widetilde{L}_{p+1} & \cdots & \frac{g_N}{h_{N-1}} \widetilde{L}_{N-2}
\end{array} \right]$ into $0$ without changing the remaining blocks. The resulting matrix is called $\widehat{X}_N (\la)$. Now, denote by $E_{21} (\la)$ the matrix obtained from $\widetilde{E}_N (\la)$ by removing its first block row and its last block column, and observe that $E_{21} (\la)$ is invertible in $\mathcal{P}_{N-1}$ and has the same structure as $Z_{21} (\la)$ in \eqref{eq.appZN2} with $N-2$ replaced by $p-1$. The last block column of $E_{21} (\la)^{-1}$ is denoted by $\widetilde{Y}_{22} (\la)$. With this information, the following equivalence transformations are
\[
\widehat{W}_N (\la)  := \diag \left( I_{m} ,
\left[
\begin{array}{cc}
I_m & -g_{1} I_m \\
0 & I_m
\end{array}
\right]
, I_s \right) \, \diag (I_{2m} , E_{21} (\la)^{-1}, I_{(N-1-p)r}) \,\, \widehat{X}_N (\la),
\]
where $I_s = I_{(p-2)m+ (N-1-p)r}$, and
\begin{align*}
\widehat{S}_N (\la) & := \widehat{W}_N (\la)  \diag \left(
\left[
\begin{array}{cc}
I_{(p-1)m} & -g_{p} \widetilde{Y}_{22} (\la) \\
0 & I_m
\end{array}
\right]
, I_{(N-2-p)r} , I_r \right) \\
& = \left[\begin{array}{c|c|c|c}
\begin{array}{ccc} \! \!\!
\frac{g_N}{h_{N-1}} \widetilde{D}_1 & \cdots & \! \!\!\!\! \frac{g_N}{h_{N-1}} \widetilde{D}_{p-1}
\end{array}
& \widetilde{R}_N^{(1)} (\la) &
0
&
\widetilde{R}_N^{(2)} (\la)  \phantom{\Big|} \\  \hline
0 & \!\! \left(\prod_{i=1}^{p-1} \frac{g_i}{h_i}\right) g_{p} I_m \!\! & 0 & 0 \\  \hline
I_{(p-1)m} & 0 &0 & 0\\  \hline
0 & -h_p \widetilde{U}^T & 0 &\!\! \left(\prod_{i=p+1}^{N-2} \frac{g_i}{h_i}\right) g_{N-1} I_r \!\! \\  \hline
0 & 0 & I_{(N-2-p)r} & 0
\end{array} \right],
\end{align*}
where $\widetilde{R}_N^{(1)} (\la)$ is the rational matrix appearing in \eqref{eq.RNlowrank}.
Finally, the announced matrix $\widetilde{S}_N (\la)$ is obtained from $\widehat{S}_N (\la)$ by using its third block row to transform the block $\left[ \begin{array}{ccc} \! \!
\frac{g_N}{h_{N-1}} \widetilde{D}_1 & \cdots & \! \!\!\! \frac{g_N}{h_{N-1}} \widetilde{D}_{p-1}
\end{array} \right]$ into $0$ without changing the remaining blocks. The structure of $\widetilde{S}_N (\la)$ implies immediately that $\widetilde{S}_N(\xi_k)$ has full column rank for every $\xi_k \in \mathcal{P}_{N-1}$ if and only if $\widetilde{R}_N(\xi_k)$ in \eqref{eq.RNlowrank} has full column rank for every $\xi_k \in \mathcal{P}_{N-1}$.


\end{document}